\renewcommand{\phi}{\varphi}
\renewcommand{\theta}{\vartheta}
\renewcommand{\geq}{\geqslant}
\renewcommand{\leq}{\leqslant}
\newcommand{\ints}{\mathbb{Z}}
\newcommand{\rats}{\mathbb{Q}}
\newcommand{\compl}{\mathbb{C}}
\newcommand{\crp}[1]{\mathbb{#1}}     
\newcommand{\GF}[1]{\mathbb{F}_{#1}}  
\newcommand{\eps}{\varepsilon}
\newcommand*{\cA}{\mathcal{A}}   
\newcommand{\iso}{\cong}    
\newcommand{\nteq}{\mathrel{\trianglelefteq}} 
\newcommand{\cconj}[1]{\overline{#1}}  
\newcommand*{\dcup}{\mathbin{\mathaccent\cdot{\cup}}}   
\DeclareMathOperator*{\bigdcup}{\mathaccent\cdot{\mathop{\bigcup}}}
\DeclarePairedDelimiter{\abs}{\lvert}{\rvert}
\DeclarePairedDelimiter{\erz}{\langle}{\rangle}
\DeclarePairedDelimiterX{\menge}[2]{\{}{\}}{
      \,
      #1 
      \nonscript\,
      \delimsize\mid %
      \allowbreak
      \nonscript\, 
      \mathopen{} 
      #2 \,}
\DeclarePairedDelimiterX{\Jacobi}[2]{(}{)}{\frac{#1}{#2}}
\DeclareMathOperator{\Ker}{Ker}        
\DeclareMathOperator{\Aut}{Aut}
\DeclareMathOperator{\Z}{\mathbf{Z}}    
\DeclareMathOperator{\C}{\mathbf{C}}    
\DeclareMathOperator{\Irr}{Irr}
\DeclareMathOperator{\mat}{\mathbf{M}}      
\DeclareMathOperator{\enmo}{End} 
\DeclareMathOperator{\Hom}{Hom}
\DeclareMathOperator{\tr}{tr}
\DeclareMathOperator{\Sp}{Sp}
\DeclareMathOperator{\SL}{SL}
\DeclareMathOperator{\GL}{GL}
\DeclareMathOperator{\ann}{ann}
\DeclareMathOperator{\ord}{\mathbf{o}}
\DeclareMathOperator{\sign}{sign}
\DeclareMathOperator{\disc}{disc}
\DeclareMathOperator{\JacR}{\mathbf{J}}  
\DeclareMathOperator{\id}{id}
\setlist{noitemsep}
\setlist[1]{labelindent=\parindent,leftmargin=*}
\setlist[enumerate,1]{label=\textup{(\alph*)}}
\newtheorem{introthm}{Theorem}
\newtheorem{thm}{Theorem}[section]
\newtheorem{cor}[thm]{Corollary}
\newtheorem{lemma}[thm]{Lemma}
\newtheorem{prop}[thm]{Proposition}
\theoremstyle{definition}
\newtheorem{defi}[thm]{Definition}
\newtheorem{hypo}[thm]{Basic Setup}
\newtheorem{example}[thm]{Example}
\newtheorem{remark}[thm]{Remark}
\newcommand{\defemph}[1]{\textbf{#1}} 
\renewcommand*\author[1]{%
  \stepcounter{author}%
  \ifnum\c@author=1
    \gdef\@author{#1}%
  \else
    \xdef\@author{\unexpanded\expandafter{\@author\and#1}}%
  \fi
  \csgdef{author@\the\c@author}{#1}}
\newcommand*\email[1]{%
  \csgdef{email@\the\c@author}{#1}}
\newcommand*\address[1]{%
  \csgdef{address@\the\c@author}{#1}}
  \xdef\author@count{\the\c@author}%
\newcommand*\print@authors{%
  \ifnum\c@author>\author@count
  \else
    \print@author{\the\c@author}%
    \advance\c@author by 1
    \expandafter\print@authors
  \fi}
\newcommand*\print@author[1]{%
  \par\medskip
  \noindent
  \begin{tabular}{@{}l@{}}%
    \csuse{author@#1}\\
    \csuse{address@#1}\\
    \textit{e-mail:}
    \texttt{\csuse{email@#1}}
  \end{tabular}
}
\renewcommand*{\title}[2][]{%
  \ifstrempty{#1}{
  \gdef\shorttitle{#2}\gdef\@title{#2}
  }{\gdef\shorttitle{#1}\gdef\@title{#2}%
  }
}
\renewenvironment{abstract}%
  {\usekomafont{abstract}
   \begin{description}[%
        style=unboxed,
        leftmargin=3em,
        labelindent=\leftmargin,
        rightmargin=\leftmargin,
        labelwidth=0pt,        
        parsep=0pt,
        listparindent=\parindent, 
        font=\normalfont\usekomafont{abstracttitle}]
   \item[\abstractname.]}%
  {\end{description}}
\newcommand*{\keywordsname}{Keywords}
\newcommand*{\mscname}[1]{Mathematics Subject Classification (#1)}
\newcommand{\keywords}[1]{\item[\keywordsname.] #1.}  
\newcommand{\subjclass}[2][2010]{%
\item[\mscname{#1}.] #2.}
\newcommand*{\tpnewline}{\texorpdfstring{\\}{}}  
\newcommand*{\mytitle}{%
   The character of the Weil representation\tpnewline %
   of a finite abelian group of odd order}
\newcommand*{\myshorttitle}{The character of the Weil representation}
\newcommand*{\myauthor}{Frieder Ladisch}
\newcommand*{\mykeywords}{Weil character, Weil representation, 
symplectic groups, oscillator representation,
finite abelian groups}
\title[\myshorttitle]{\mytitle}                           
\author{\myauthor}
\address{Universität Rostock\\
         Institut für Mathematik\\
         18051 Rostock\\
         Germany}
\email{frieder.ladisch@uni-rostock.de}
\date{}%
\begin{document}
\maketitle
\begin{abstract}
    Let $V$ be a finite abelian group of odd order,
    equipped with a non-degenerate, alternating form
    $\omega\colon V\times V \to \ints/m\ints$.
    We give closed formulas for the character values 
    of the Weil representation
    associated with $(V,\omega)$.
    These formulas generalize the ones given by
    S.~Gurevich and R.~Hadani (2007) and by 
    T.~Thomas (2008, 2013) from finite vector spaces
    to abelian groups.
    Our formulas do not involve the choice of a 
    Lagrangian subgroup of $V$.
    Our proofs are based on an elementary algebraic 
    approach introduced by H.~N.~Ward (1972, 2017)
    for finite vector spaces over fields.
\subjclass[2010]{Primary 11F27, Secondary 20C15}
\keywords{\mykeywords}  
\tableofcontents
\end{abstract}


\section{Introduction}
\subsection{Results}
Let $R$ be a finite, commutative ring of odd order
with some primitive additive character
$\lambda \colon (R,+)\to \compl^*$,
where \emph{primitive} means that no nonzero ideal is contained
in $\Ker \lambda$.
Let $V$ be a finite module over $R$,
with a symplectic form
$\omega \colon V\times V\to R$.
The symplectic group $\Sp(V)=\Sp_R(V,\omega)$ consists of the
$R$-linear automorphisms of $V$ preserving the form~$\omega$,
This group has a well known representation~$W$
called the \emph{Weil representation}, 
after a celebrated paper by André Weil~\cite{Weil64},
who studied analogous representations
 in the case
when $V$ is a locally compact abelian group.
In this paper, we give formulas for the character, $\tr W$, of the 
Weil representation~$W$
in the situation where $V$ is a finite module as described above.
(This includes the case that $V$ is a finite abelian group of odd 
order). 

Our first main result is the following:
\begin{introthm}
\label{ti:valuesodd}
    Let $g\in \Sp(V)$ have odd order.
    Then
    \[ \tr W(-g)= \tr W(-1_V)
       \quad\text{and}\quad
       \tr W(g) = \frac{1}{ \sqrt{ {\abs{V} } } }
                  \sum_{v\in V} \lambda\left( \tfrac{1}{2}
                                   \omega(v,vg) 
                                   \right)
       \,.
    \]
\end{introthm}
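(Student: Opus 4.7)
The plan is to follow Ward's algebraic approach, working directly with the Heisenberg group $H$ associated to $(V,\omega,\lambda)$ and its unique irreducible representation $\rho\colon H\to \GL(V_\rho)$ with central character $\lambda$ on a space of dimension $\sqrt{\abs{V}}$, without choosing a Lagrangian. Since $\abs{V}$ is odd, $\tfrac{1}{2}\in R$ makes sense and I would adopt the symmetric normalization $\rho(v)\rho(w) = \lambda(\tfrac{1}{2}\omega(v,w))\rho(v+w)$, so that $\rho(v)^{-1} = \rho(-v)$ and $\tr \rho(u) = \sqrt{\abs{V}}\,\delta_{u,0}$. The Weil representation is characterized by $W(g)\rho(v) = \rho(vg)W(g)$ and exists as a genuine linear representation because the 2-cocycle obstruction is a coboundary in the odd order setting.

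The central computation is the conjugation identity
\[
\rho(v)\,W(g)\,\rho(v)^{-1} = \lambda\bigl(\tfrac{1}{2}\omega(v,vg)\bigr)\,W(g)\,\rho(v(g-1)),
\]
an immediate consequence of the intertwining property and the symmetric multiplication rule. Summing over $v\in V$ produces an operator commuting with every $\rho(w)$, so by Schur's lemma applied to the irreducible $\rho$ it equals a scalar $\mu_g\cdot I$. Comparing traces yields $\mu_g = \sqrt{\abs{V}}\,\tr W(g)$, and inserting this back gives the master identity
\[
W(g)\,\sum_{v\in V}\lambda\bigl(\tfrac{1}{2}\omega(v,vg)\bigr)\,\rho(v(g-1)) = \sqrt{\abs{V}}\,\tr W(g)\,I.
\]
Taking the trace of this identity and using $\tr\rho(u) = \sqrt{\abs{V}}\,\delta_{u,0}$ quickly yields the norm identity $\abs{\tr W(g)}^2 = \abs{V^g}$, which fixes the modulus of the character but not the phase.

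To pin down the phase for $g$ of odd order $n$, I would exploit the unique square root $h := g^{(n+1)/2}$, satisfying $h^2 = g$ and therefore $W(g) = W(h)^2$. The conjugation identity above determines the coefficients in the expansion $W(h) = \sum_v a_v(h)\rho(v)$ up to a single overall scalar, which is fixed in modulus by unitarity of $W(h)$. Computing $\tr W(h)^2$ as a double sum over $V\times V$, collapsing via the orthogonality $\tr(\rho(v)\rho(w)) = \sqrt{\abs{V}}\,\delta_{v+w,0}$, and reindexing using the $h$-invariance should reduce the expression to the claimed $\tfrac{1}{\sqrt{\abs{V}}}\sum_v\lambda(\tfrac{1}{2}\omega(v,vg))$. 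The main obstacles are the careful bookkeeping of Gauss-sum phases through the squaring, and the treatment of the case $V^g\ne 0$ in which $g-1$ is not invertible; the latter requires a separate argument, either by reducing to a suitable quotient on which $g-1$ becomes invertible, or by a direct check on the fixed part.

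Finally, the first identity $\tr W(-g) = \tr W(-1_V)$ will follow from the second combined with properties of the involution $W(-1)$. For $g$ of odd order, $-1$ is not a root of $x^n - 1$, so $g+1$ is invertible on $V$, and the norm identity gives $\abs{\tr W(-g)}^2 = \abs{V^{-g}} = 1 = \abs{V^{-1}} = \abs{\tr W(-1_V)}^2$. Writing $W(-g) = W(-1)W(g)$ (with $-1$ central in $\Sp(V)$), decomposing $V_\rho$ into the $\pm 1$-eigenspaces of the involution $W(-1)$, and applying the Gauss-sum formula for $\tr W(g)$ to each eigenspace should produce the required phase cancellation, yielding the claimed equality.
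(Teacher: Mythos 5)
Your setup (Heisenberg group, Schr\"odinger representation, intertwining property, trace orthogonality) is equivalent to the paper's symplectic-algebra framework, and your ``master identity'' correctly recovers $\abs{\tr W(g)}^2=\abs{\C_V(g)}$. But the step that is supposed to pin down the \emph{phase} is circular. Writing $W(h)=\sum_v a_v\rho(v)$ with $h^2=g$, the conjugation identity determines the $a_v$ only up to one overall scalar $\mu$, and $\mu$ \emph{is} (up to the factor $\sqrt{\abs{V}}$) the unknown $\tr W(h)$ itself, since $\tr W(h)=\sqrt{\abs{V}}\,a_0$. Carrying out your double sum for $\tr W(h)^2$ therefore yields exactly the convolution identity $\tr W(g)=\tr W(h)^2\cdot\abs{V}^{-1}\cdot(\text{Gauss sum in }B_h)$ --- i.e.\ it expresses the unknown phase of $\tr W(g)$ in terms of the equally unknown phase of $\tr W(h)$, where $h$ again has odd order. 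Iterating square roots inside the cyclic group $\erz{g}$ never terminates at the identity, so no phase is ever determined. There is a second, related gap: you never specify \emph{which} linearization of the projective representation you take. For any linear character $\chi$ of $\erz{g}$ (a cyclic group of odd order $n$, so there are $n$ of them), twisting $W$ by $\chi$ on $\erz{g}$ gives another genuine representation with the same intertwining property but with $\tr W(g)$ multiplied by $\chi(g)$; hence the stated formula is simply not provable without invoking the canonical normalization ($\det W(g)_{|E_+}=\det W(g)_{|E_-}$, or Isaacs's equivalent condition).

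The paper resolves both points at once and in the opposite logical order from yours: it first proves $\psi(-g)=\psi(-1)$, equivalently $\psi_+(g)-\psi_-(g)=1$, by a character-theoretic argument --- the virtual character $\psi_+-\psi_-$ has norm $1$ on every odd-order subgroup $U$ because $\abs{\psi(-g)}=\abs{\C_V(-g)}^{1/2}=1$, so $(\psi_+-\psi_-)_U$ is a linear character, which the determinant normalization of the canonical $W$ forces to be trivial. Only then does it factor $g=(-1)\cdot(-g)$ (not $g=h\cdot h$) in the convolution formula; the now-known factor $\psi(-1)\psi(-g)=1$ kills the phase ambiguity, and the remaining Gauss sum over $V(1+g)=V$ is exactly the claimed expression. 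So the missing ingredient in your proposal is an independent determination of the sign $\psi(-1)\psi(-g)$ tied to the canonical normalization; without it, neither of the two asserted identities can be reached. (Your worry about $g-1$ not being invertible is a red herring: the relevant operator for odd-order $g$ is $1+g$, which is automatically invertible.)
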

This result has a surprisingly simple proof
in the approach we use in this paper.
As mentioned in the abstract, this approach is essentially
due to H.~N.~Ward~\cite{ward72,Ward17}, who considered vector spaces
over finite fields.
In \cref{sec:sympl_alg,sec:weilrep},
we will give a fairly detailed and self-contained exposition 
of Ward's construction of the Weil representation,
extending it to the case of finite abelian groups.
We will also explain how to define 
\emph{the} (canonical) Weil representation~$W$
associated to $(V,\omega,\lambda)$,
independently of whether $\Sp(V)$ is a perfect group or not.

Ward's explicit construction immediately yields a
convolution formula for $\tr W$ (see \cref{p:mult}),
from which \cref{ti:valuesodd} follows easily
(see \cref{p:oddupsi,c:oddordervalues}).

In the case when the orders of $g$ and $V$ both are powers of the same
prime, the second formula in \cref{ti:valuesodd} was proved
by I.~M.~Isaacs~\cite[Theorem~6.1]{i73},
with a much longer proof.
In a different context, this formula appears in my
dissertation\footnote{When writing my dissertation
    (2008),
    I was not aware that the \enquote{magic character}
    appearing there and in Isaacs's work is in fact
    the character of a Weil representation.%
}~\cite[Corollary 4.34]{ladisch09diss}.

The formula for $\tr W(g)$ in \cref{ti:valuesodd} 
is in general not correct when 
$g\in \Sp(V)$ has even order. 
Before we introduce all the definitions necessary for
the general formula, 
we give another special case.
When $\alpha$ is or induces a permutation
of some finite set $X$,
then $\sign_X(\alpha)$ denotes the \emph{sign} or 
\emph{signature} of this permutation. 

\begin{introthm}
\label{ti:inv_value}
   Let $g\in \Sp(V)$ be such that 
   $1-g$ is invertible.
   Then
   \[ \tr W(g) = (-1)^{ \frac{ \sqrt{ \abs{V} } -1 }{2} }
                 \sign_V(1-g).
   \]
\end{introthm}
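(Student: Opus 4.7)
My plan is to derive a general trace formula for $W$ from Ward's construction of the Weil representation and then exploit the invertibility of $1-g$ to convert it into a Gauss sum that can be evaluated explicitly in terms of $\sign_V(1-g)$.

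The convolution formula of \cref{p:mult} that underlies \cref{ti:valuesodd} in fact provides, for any $g\in\Sp(V)$, an expression of the form
\[
  \tr W(g) \;=\; \frac{1}{\sqrt{|V|}} \sum_{v\in V} \lambda\!\bigl( f_g(v) \bigr),
\]
where $f_g$ is a quadratic-type function of $v$ built from $g$ and $\omega$. For odd-order $g$ this reduces to $f_g(v) = \tfrac12\omega(v,vg)$, as recorded in \cref{ti:valuesodd}; in the present setting I do not need to assume odd order, only that the raw convolution formula is available.

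Because $1-g$ is invertible on $V$, the substitution $v = u(1-g)^{-1}$ is a bijection of $V$. A short computation using $vg = v - v(1-g)$ and the $\Sp$-invariance of $\omega$ turns the exponent into $\tfrac12\,\omega\bigl(u, u(1-g)^{-1}\bigr)$, whose associated symmetric bilinear form
\[
  B_g(u,w) \;=\; \tfrac12\bigl[\omega(u,w(1-g)^{-1}) + \omega(w,u(1-g)^{-1})\bigr]
\]
is non-degenerate; its symmetry is essentially the classical fact that the Cayley transform $(1+g)(1-g)^{-1}$ is $\omega$-self-adjoint. After the change of variables, the problem reduces to evaluating a quadratic Gauss sum on $V$.

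The final, and hardest, step is the evaluation
\[
  \sum_{u\in V} \lambda\!\bigl(\tfrac12 B_g(u,u)\bigr) \;=\; \epsilon(B_g)\,\sqrt{|V|}
\]
together with the identification $\epsilon(B_g) = (-1)^{(\sqrt{|V|}-1)/2}\,\sign_V(1-g)$. The sign $\epsilon(B_g)$ is a discriminant of $B_g$ and, on inspection, depends only on $\det(1-g)$ through a Jacobi-symbol-like character. The main obstacle is therefore linking this discriminant sign to the permutation signature $\sign_V(1-g)$: this is a Zolotarev-type statement, which I expect to handle by decomposing $V$ into its primary cyclic constituents for $1-g$ and arguing component by component, thereby reducing to the classical prime-field version of Zolotarev's lemma. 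The constant $(-1)^{(\sqrt{|V|}-1)/2}$ will then drop out as the Gauss-sum factor attached to the standard hyperbolic form on $V$, i.e., the contribution from $g=-1$ in \cref{ti:valuesodd}.
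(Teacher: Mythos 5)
Your plan founders at the very first step. The convolution formula (\cref{p:mult}) does \emph{not} provide, for arbitrary $g$, an expression of $\tr W(g)$ as a normalized exponential sum over $V$: it only relates $\psi(gh)$ to the product $\psi(g)\psi(h)$ times a Gauss-type sum, so to extract $\psi(g)$ itself you must already know the values at the factors. Equivalently, Ward's element $P(g)$ always has trace $\sqrt{\abs{V}}$ (by \cref{l:tracesx}), and the whole difficulty is the normalizing scalar $\psi(g)/\sqrt{\abs{V}}$. The formula $\tr W(g)=\frac{1}{\sqrt{\abs{V}}}\sum_{v\in V}\lambda(\tfrac12\omega(v,vg))$, into which you substitute $v=u(1-g)^{-1}$, is obtained in the paper only for $g$ of odd order, because only then does $\psi(-1)\psi(-g)=1$ hold (\cref{p:oddupsi}); the introduction explicitly warns that it fails for even order. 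Indeed $g=-1$ is a counterexample lying inside your hypotheses: $1-g=2$ is invertible, the right-hand side equals $\sqrt{\abs{V}}$ (since $\omega$ is alternating), but $\psi(-1)=(-1)^{(\sqrt{\abs{V}}-1)/2}=\pm1$. So the reduction to a single quadratic Gauss sum on $V$ is not available by a change of variables; producing it is the hard content of the theorem, not a preliminary.

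For comparison, the paper obtains \cref{ti:inv_value} as \cref{c:gm1_inv} by first proving the general formula $\psi(g)=\sqrt{\abs{\C_V(g)}}\,\sign(q/B_g)\,\gamma_{\lambda}(-q)$ (\cref{t:values}) by induction on $\abs{V(1-g)}$ --- using the orthogonal factorization $g=hk$ (\cref{p:factorize}), the reduction of the residual case to $g=th$ with $t$ the involution in $\erz{g}$, and the $2$-power-order congruence and rationality results (\cref{p:2powervalues}, \cref{c:value_inv}) --- and only then specializing via \cref{l:semisimpleel}. Your final step, identifying the sign of the Gauss sum with $(-1)^{(\sqrt{\abs{V}}-1)/2}\sign_V(1-g)$ by decomposing into primary cyclic pieces and invoking Zolotarev, is sound in spirit and matches what the paper does in \cref{c:gauss_ae}, \cref{l:detsign} and \cref{l:semisimpleel}; but without a valid derivation of the Gauss-sum expression for even-order $g$, the argument does not get off the ground.
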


We collect some elementary properties of $\sign$
for automorphisms 
of finite $R$-modules 
in \cref{sec:sign}.
For example, when $R=F$ is a finite field of odd order and 
$\alpha\in\GL(n,F)$, 
then $\sign_{F^n}(\alpha)=1$
if and only if $\det(\alpha)$ is a square in $F^*$.
In view of this, \cref{ti:inv_value} generalizes a result
of S.~Gurevich and R.~Hadani~\cite[Theorem~2.2.1]{gurevichhadani07},
which was obtained using algebraic geometry.

For the general formula, we need some more notation.
For $g\in \Sp(V)$, define a bilinear form $B_g$ on 
$X=V(1-g)$ by
\[ B_g(v(1-g),w(1-g)) = \omega( v, w(1-g) )
   \quad \text{ for } v,w\in V.
\]
Then $B_g$ is well-defined and non-degenerate as form on
$X=V(1-g)$ (see \cref{l:sigmaform} below).

Second, 
for an arbitrary 
\emph{symmetric,} non-degenerate bilinear form
$q\colon X\times X \to R$, set 
\[  \gamma_{\lambda}(q) := \frac{1}{\sqrt{\abs{X}}} 
\sum_{x\in X} \lambda( \tfrac{1}{2}q(x,x)).
\]
We will prove some elementary properties 
of such normalized \emph{Gauss sums}
in \cref{sec:gausssums}, for example that
$\gamma_{\lambda}(q)^2 = (-1)^{(\abs{X}-1)/2}$.
(In the finite field case, 
$\gamma_{\lambda}(q)$ is the \emph{Weil index}
in the usual sense~\cite{Weil64,Perrin80}, 
but we do not need this.)

The following main result is proved in \cref{sec:values},
and \cref{ti:inv_value} is deduced as 
\cref{c:gm1_inv}.

\begin{introthm}
\label{ti:main}
    Let $g\in \Sp(V)$ and write $X=V(1-g)$.
    If
    $q\colon X \times X \to R$ 
    is a non-degenerate, symmetric form,
    then
    there is a unique 
    $\alpha\in \GL_R(X)$ such that
    $q(x,y) = B_g(x\alpha,y)$ for all
    $x$, $y\in X$.
    Then
    \[ \tr W(g) = \sqrt{\abs{\C_V(g)}} \:
    \sign_X(\alpha) \,\gamma_{\lambda}(-q).
    \]
\end{introthm}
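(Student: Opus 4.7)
The plan is to prove the identity in two independent steps: show that the right-hand side is independent of the choice of $q$, so that any convenient form may be used, and then evaluate both sides using the convolution formula of \cref{p:mult}.

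Uniqueness of $\alpha$ is immediate from non-degeneracy of $B_g$ (established in \cref{l:sigmaform}): the assignment $\alpha\mapsto B_g(\,\cdot\,\alpha,\cdot\,)$ is a bijection from $\GL_R(X)$ onto the set of non-degenerate bilinear forms on $X$. For independence, given a second choice $q'$ with associated $\alpha'\in\GL_R(X)$, the ratio $\beta := \alpha'\alpha^{-1}$ satisfies $q'(x,y) = q(x\beta, y)$, and the symmetry of both $q$ and $q'$ forces $\beta$ to be self-adjoint with respect to $q$. I would establish, among the elementary properties of normalized Gauss sums in \cref{sec:gausssums}, the transformation identity
\[
\gamma_\lambda\bigl((x,y)\mapsto q(x\beta,y)\bigr) \;=\; \sign_X(\beta)\,\gamma_\lambda(q)
\]
for every $q$-self-adjoint $\beta\in\GL_R(X)$; applied to $-q$ and $\beta$, this gives $\sign_X(\alpha)\gamma_\lambda(-q) = \sign_X(\alpha')\gamma_\lambda(-q')$.

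The character value itself is extracted from the convolution formula \cref{p:mult}. Using $\omega(v,v)=0$ and the defining relation of $B_g$, one rewrites $\tfrac12\omega(v,vg) = -\tfrac12 B_g(v(1-g),v(1-g))$, and reindexing via the surjection $v\mapsto v(1-g)\colon V\twoheadrightarrow X$---whose fibres have size $\abs{\C_V(g)}$---transforms the trace into
\[
\tr W(g) \;=\; \sqrt{\abs{\C_V(g)}}\cdot\frac{1}{\sqrt{\abs{X}}}\sum_{x\in X}\lambda\bigl(-\tfrac12 B_g(x,x)\bigr),
\]
up to a sign coming from \cref{p:mult} that is trivial for $g$ of odd order (thereby recovering \cref{ti:valuesodd}). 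Substituting $x = y\alpha$ in this sum, expanding $B_g(y\alpha, y\alpha) = q(y, y\alpha)$, and invoking the Gauss sum transformation identity above applied to a $q$-self-adjoint automorphism arising from $\alpha$, one matches the resulting expression with $\sign_X(\alpha)\sqrt{\abs{X}}\,\gamma_\lambda(-q)$, which delivers the claim.

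\emph{Main obstacle.} The subtlest point is that the natural symmetrization $q_0 = \tfrac12(B_g + B_g^{\mathsf T})$ of $B_g$ can be degenerate---for instance at $g = -1_V$, where $B_g = \tfrac12\omega$ is antisymmetric and $q_0 = 0$---which blocks the cleanest direct identification via taking $q = q_0$. The proof therefore has to handle an arbitrary non-degenerate symmetric $q$, and tracing the sign from the parity correction in \cref{p:mult} through to $\sign_X(\alpha)$ is the principal book-keeping task, relying on the behaviour of $\sign$ developed in \cref{sec:sign}.
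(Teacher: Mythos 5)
Your first step (independence of the right-hand side from the choice of $q$) is correct and is exactly the paper's \cref{c:gauss_ae}; note you do not even need $\beta$ to be $q$-self-adjoint, since the rule $\gamma_{\lambda}(q_2)=\sign(\sigma)\gamma_{\lambda}(q_1)$ holds whenever $q_2(x,w)=q_1(x\sigma,w)$ with both forms symmetric and non-degenerate. The gap is in the second step. The convolution formula \cref{p:mult} is a relation among $\psi(g)$, $\psi(h)$ and $\psi(gh)$; it does not by itself yield a closed expression for $\psi(g)$ as a Gauss sum over $X=V(1-g)$. The intermediate identity you assert,
\[
\tr W(g)=\sqrt{\abs{\C_V(g)}}\cdot\frac{1}{\sqrt{\abs{X}}}\sum_{x\in X}\lambda\bigl(-\tfrac{1}{2} B_g(x,x)\bigr),
\]
claimed to hold \enquote{up to a sign}, is false for even-order $g$ in a way no sign can repair: for $g=-1_V$ one has $B_g(x,x)=0$ for all $x$, so the right-hand side equals $\sqrt{\abs{V}}$, whereas $\abs{\tr W(-1_V)}=1$. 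The absolute values disagree because the sum $\sum_x\lambda(-\tfrac{1}{2}B_g(x,x))$ only sees the symmetrization $Q_g$, whose radical is $\Ker(1+g)$; when that is nonzero the sum degenerates. Likewise, after substituting $x=y\alpha$ you obtain the diagonal of the generally non-symmetric form $(y,z)\mapsto q(y,z\alpha)$, to which \cref{c:gauss_ae} does not apply. Even in the odd-order case, extracting the closed formula is not pure book-keeping from \cref{p:mult}: it requires \cref{p:oddupsi} (that $\psi(-1)\psi(-g)=1$), which rests on a character-theoretic argument about $\psi_{+}-\psi_{-}$ restricted to odd-order subgroups.

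What is missing is the entire mechanism the paper uses to reach the general case: an induction on $\abs{V(1-g)}$ driven by the factorization $g=hk$ attached to a $B_g$-orthogonal decomposition $V(1-g)=X\oplus Y$ (\cref{p:factorize}, resting on \cref{p:spparam}), plus a separate treatment of the remaining even-order case via $g=th$ with $t$ the involution of $\erz{g}$, where one needs the value at involutions (\cref{c:value_inv}, itself obtained from the $2$-power congruence \cref{p:2powervalues}) and an explicitly constructed split symmetric form coming from a symplectic basis (\cref{l:semisimpleel}). Your \enquote{main obstacle} paragraph correctly identifies where the difficulty lies, but the proposal contains no strategy for overcoming it; the claim that only a sign needs to be traced through substantially understates what is required.
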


When $R$ is a finite field of odd order, 
then $\sign_X(\alpha)=1$ in \cref{ti:main} 
if and only if
$q$ and $B_g$ have the same discriminant.
(In general, $B_g$ is not symmetric.)
It follows that in the finite field case,
the formula in \cref{ti:main} reduces to the one
developed by T. Thomas in his 
2013 paper~\cite[Corollary~1.4]{Thomas13}.
The formula in Thomas's earlier paper~\cite[Theorem~1A]{thomas08}
follows easily by elementary rules for the evaluation of Gauss sums
(cf. \cref{ex:field}).

At least when $R$ is a finite principal ideal ring,
for example, $R=\ints/ m \ints$, 
it is not difficult to see that 
there exist non-degenerate, symmetric forms
on any finite $R$-module.
In our proof of \cref{ti:main},
we will show that for a finite principal ideal ring $R$, 
there is such a form~$q$
for which the $\alpha$ in \cref{ti:main}
has even parity, that is, $\sign_X(\alpha)=1$,
and so 
$\tr W(g) = \sqrt{ \abs{ \C_{V}(g) } } \gamma_{\lambda}(-q)$.
(Thomas~\cite[p.~1538]{Thomas13} takes the viewpoint
that the form $B_g$, which is in general not symmetric, 
determines an element of the Witt ring modulo a certain ideal.
Thus $B_g$ determines an equivalence class of symmetric forms
and the Weil index $\gamma_{\lambda}$ is constant on this equivalence 
class.)
In general, we can always view $V$ and $X$ just as abelian groups,
and thus replace $R$ by
$\ints/m\ints$ with suitable $m$.

The Weil representation depends on the primitive linear character
$\lambda\colon R\to \compl^*$.
As a corollary of our results, 
we prove in \cref{c:GMT18} the following
nice formula, which relates the characers 
of the Weil representations
$W_{\lambda}$ and $W_{\lambda^2}$:
\[  \tr W_{\lambda}(g) \cdot \tr W_{\lambda}(-g)
    = (-1)^{ \frac{ \sqrt{\abs{V}}-1 }{2} } 
      \cdot \tr W_{\lambda^2}(g^2).
\]
For finite fields, this formula is due to
R.~Guralnick, K.~Magaard and P.~H.~Tiep
\cite[Theorem~1.2]{GuralnickMagaardTiep18}.
(We have $\tr W_{\lambda} = \tr W_{\lambda^2}$
on $\Sp_R(V)$
if and only if $2$ is a square in $R$.)

\subsection{Background}

Weil representations were introduced by A.~Weil \cite{Weil64}
for locally compact abelian groups, 
in particular vector spaces over local fields 
or adelic rings.
Weil representations for symplectic vector spaces over finite fields 
appeared probably first in a 1961 paper by
B.~Bolt, T.~G.~Room, and G.~E.~Wall~\cite{BoltRoomWall61},
independently of Weil's work.
Although Weil suggested that the case of finite abelian groups 
would merit closer investigation~\cite[p.~143--144]{Weil64},
most of the literature on Weil representations associated
to finite abelian groups is concerned with the case of vector spaces
over finite fields.

S.~Tanaka~\cite{Tanaka67a,Tanaka67b} 
used Weil representations associated to
$\ints/p^k\ints \oplus \ints/p^{\ell} \ints$ to construct the 
irreducible representations of $\SL(2,\ints/p^k\ints)$.
Weil representations
of finite abelian groups in general were
also studied by A.~Prasad~\cite{prasad09p}, K.~Dutta and A.~Prasad~\cite{DuttaPrasad15} and 
N.~Kaiblinger and M.~Neuhauser~\cite{KaiblingerNeuhauser09}.
By now, there is also an extensive literature 
on Weil representations of $\Sp(2n,R)$ and unitary groups over 
finite rings~$R$
\cite{climcnsze00,CruickshankGutierrezSzechtman20,gowszecht02,%
szechtman99t,Szechtman05}
(to name just a few references).

The absolute value of the character of the Weil representation
of a vector space over a finite field
was determind by R.~Howe \cite{howe73}.
Formulas for values of the Weil character of the symplectic group of a
vector space over a finite field
were given by P.~Gerardin~\cite{gerardin77}
and
K.~Shinoda~\cite{shinoda80},
but these formulas are quite complicated and 
partly depend on case analysis.
Formulas 
for the character values on a set of generating 
elements of the symplectic group were given by M.~Neuhauser
\cite{Neuhauser02},
using concrete matrices for the Weil representation.

At the same time as Howe, but in a completely different context,
the character of the Weil representation associated with a
finite abelian $p$-group was studied extensively by
I.~M.~Isaacs~\cite{i73}. 
(In Isaacs's paper, the term \enquote{Weil representation}
is not used. The relation of Isaacs's work to the Weil representation
and its character can perhaps best seen from his
Theorems~4.7 and~4.8.)
Isaacs proved that $\abs{\tr W(g)}^2 = \abs{\C_V(g)}$ 
\cite[Thms.~3.5(a), 4.8]{i73}, 
and gave an algorithm for determining the signs
of $\tr W(g)$ for all $g\in \Sp(V)$, 
without giving a closed formula.
He also proved our \cref{ti:valuesodd} under the assumption
that $g$ has $p$-power  order.

A quite elementary approach to the Weil representation
associated to a finite abelian group was given by
A.~Prasad~\cite{prasad09p}. 
Prasad used his methods
to give a very simple proof of the equality
$\abs{\tr W(g)}^2 = \abs{\C_V(g)}$
for arbitrary abelian groups.

In the case where $V$ is a vector space
over a finite field,
S.~Gurevich and R.~Hadani \cite{gurevichhadani07} 
found a simple formula for $\tr W(g)$
when
$g\in \Sp(V)$ is such that $g-1$ is invertible
(essentially the one in \cref{ti:inv_value} above).
Their proof uses techniques of algebraic geometry.
T.~Thomas~\cite[Theorem~1A]{thomas08} 
\cite[Corollary~1.4]{Thomas13} found 
simple formulas for
$\tr W(g)$ for arbitrary $g\in \Sp(V)$, 
where $V$ is a vector space over a 
(finite or local) field.
Thomas's approach works uniformly for finite and local fields, 
and uses machinery like the Weil index, the Maslov index and a 
construction of the metaplectic group as an extension
of the symplectic group by a subfactor group of the Witt group.
A more elementary, but rather longish proof of
Thomas's results was given by 
A.-M. Aubert and T. Przebinda~\cite{AubertPrzebinda14}.
In the finite field case, H.~N.~Ward~\cite{Ward17}
gave a short and elementary proof of a version of Thomas's 
character formula,
building on his approach to the Weil representation
from 1972~\cite{ward72}.
Our proof of \cref{ti:main} owes a significant debt to 
the ideas in Ward's preprint~\cite{Ward17}.
The methods of our proofs are rather elementary, 
and we have made an effort to 
make the paper self-contained.

\section{Preliminaries and Notation}
\label{sec:prelim}

Throughout, we will assume the following:

\begin{hypo}\label{basicsetup}
    $R$ is a finite commutative ring (with $1$)
        such that $2$ is invertible in $R$,
    and 
    $\lambda\colon R\to \compl^*$ 
    is a primitive additive character of $R$,
    where \emph{primitive}  means that
    $\Ker \lambda$ contains no nonzero ideal of $R$.
       (A finite ring $R$ has a primitive additive character
       if and only if $R$ is a finite Frobenius ring.
       We refer the reader to T.~Honold's paper~\cite{Honold01} and 
       the references therein.)
       
    Additionally, $V$ is a finite module over $R$
      and $\omega\colon V\times V \to R$ is a non-degenerate,
      bilinear alternating form on $V$. 
      We write $\Sp_R(V,\omega)$
      (or simply $\Sp(V)$ when $R$ and $\omega$ 
      are clear from context) 
      for the 
      group of $R$-linear automorphisms~$g$ of $V$
      preserving the form, that is,
      $\omega( vg , wg ) = \omega( v , w )$ for all
      $v$, $w\in V$.   

    (The case of a finite abelian group $V$ of odd order
is covered by 
$R=\ints/m\ints$, 
where $m$ is any odd multiple of the exponent of $V$.)
\end{hypo}

    If $B\colon U\times W\to R$ is a bilinear form on the product of
    two $R$-modules, 
    and if $X\leq U$ and $Y\leq W$,
    we write
    \begin{align*}
       X^{B} 
       &= \menge{w\in W}{ B(x,w)=0 \text{ for all } x\in X }
          \quad\text{and}\quad
       \\
       \leftidx{^{B}}{Y}{}
       &= \menge{ u\in U }{ B(u,y)=0 \text{ for all } y\in Y }.
    \end{align*}
    We call $B$ \emph{non-degenerate} when
    $U^B = 0$ and $\leftidx{^{B}}{W}{} = 0$.
    For $B=\omega$, we write 
    $X^{\perp}$ instead of $X^{\omega}$.

\begin{lemma}\label{l:perp_basic}
    Let $R$ be a finite commutative ring with a primitive 
    additive character $\lambda\colon R\to \compl^*$,
    and let
    $B\colon U\times W\to R$ be  a non-degenerate
    bilinear form.
    Then the maps $U\to \Hom_R(W,R)$ and $W\to \Hom_R(U,R)$
    induced by $B$ are isomoprhisms.  
    For $X$, $\tilde{X}\leq U$, we have:
   \begin{enumerate}
   \item \label{it:perp_via_mu}%
        $w\in X^{B} \iff 
        \lambda(B( x , w ))=1$ for all $x\in X$.        
   \item $\leftidx{^B}{(X^B)} = X$.
   \item \label{it:perp_sum}
        $(X+\tilde{X})^{B} = X^{B} \cap {\tilde{X}}^{B}$
       and $(X\cap \tilde{X})^{B} = X^{B} + {\tilde{X}}^{B}$.
   \item $\abs{U} = \abs{X}\abs{X^{B}} = \abs{W}$.
   \end{enumerate}
   Similar statements hold on the other side for 
   submodules $Y$, $\tilde{Y}\leq W$.
\end{lemma}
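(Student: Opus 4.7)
The plan is to prove~(a) first via a primitive-character trick, and then convert $B$ into a non-degenerate pairing of finite abelian groups in order to apply Pontryagin duality for everything else.

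First I would prove~(a). The implication $\Rightarrow$ is trivial. For $\Leftarrow$, the key observation is that since $X\leq U$ is an $R$-submodule, the image $B(X,w)=\{B(x,w):x\in X\}$ is an $R$-submodule of~$R$, i.e.\ an ideal; if $\lambda$ is identically~$1$ on it, then this ideal is contained in $\Ker\lambda$ and must be zero by primitivity of~$\lambda$. Hence $w\in X^B$.

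Next I would consider the composed biadditive pairing $\tilde B(u,w)=\lambda(B(u,w))$ into~$\compl^*$. By~(a) with $X=U$, the right kernel of~$\tilde B$ equals $U^B=0$, and symmetrically the left kernel is zero. Hence $\tilde B$ induces injective homomorphisms $U\hookrightarrow\Hom(W,\compl^*)$ and $W\hookrightarrow\Hom(U,\compl^*)$ of the underlying finite abelian groups. The cardinality sandwich
\[\lvert U\rvert\leq\lvert\Hom(W,\compl^*)\rvert=\lvert W\rvert\leq\lvert\Hom(U,\compl^*)\rvert=\lvert U\rvert\]
forces equality throughout; in particular $\lvert U\rvert=\lvert W\rvert$ (the outer equality in~(d)), and both injections are isomorphisms. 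These injections factor as $U\to\Hom_R(W,R)\to\Hom(W,\compl^*)$ via postcomposition with~$\lambda$ (and symmetrically for~$W$); both factors are injective (the first by non-degeneracy of~$B$, the second by the same primitivity argument applied to the image of an $R$-linear map to~$R$), so bijectivity of the composition forces bijectivity of both, proving the first assertion of the lemma.

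Third, the inner equality in~(d) follows from Pontryagin duality: under the isomorphism $U\xrightarrow{\sim}\Hom(W,\compl^*)$ the submodule $X$ has image of order~$\lvert X\rvert$, whose annihilator in~$W$ has order $\lvert W\rvert/\lvert X\rvert$, and by~(a) this annihilator equals~$X^B$. Part~(b) now drops out from a cardinality count: the inclusion ${}^B(X^B)\supseteq X$ is trivial, and $\lvert {}^B(X^B)\rvert=\lvert U\rvert/\lvert X^B\rvert=\lvert X\rvert$. For~(c), the identity $(X+\tilde X)^B=X^B\cap\tilde X^B$ is immediate from the definitions, and the dual identity follows by applying this first identity (on the other side) together with~(b) to the submodules $X^B,\tilde X^B\leq W$. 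The only substantive step in the whole argument is the primitivity-ideal trick of~(a); everything else is Pontryagin-duality bookkeeping.
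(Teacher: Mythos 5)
Your proposal is correct and follows essentially the same route as the paper: the primitivity-of-$\lambda$ argument applied to the ideal $B(X,w)$ gives~(a), and then composing $B$ with $\lambda$ turns everything into Pontryagin duality for the finite abelian groups $U$ and $W$, with the counting identity $\abs{W}=\abs{X}\abs{X^{B}}$ coming from the induced isomorphism onto $\widehat{X}$. The paper leaves (b) and (c) as ``the rest follows easily''; your cardinality count for (b) and the double-perp derivation of the second identity in (c) are exactly the intended details.
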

\begin{proof}
   We begin with \ref{it:perp_via_mu}.
   For $w\in W$ and $X\leq U$,
   the set
   $B ( X, w ) $ is an ideal of $R$ .
   As $\Ker(\lambda)$ contains no non-zero ideals,
   $\lambda( B ( X, w ) ) = 1$ implies $w\in X^{B}$. 
   The direction \enquote{$\implies$} is clear,
   so \ref{it:perp_via_mu} follows.
   
   In particular, we see that the multiplicative form
   $\mu(x,y) = \lambda(B( x , y ))$ induces
   injections $W \hookrightarrow\widehat{U}:= \Hom(U,\compl^*)$
   and $U \hookrightarrow \widehat{W}$.
   From $\abs{U}=\abs{\widehat{U}}$ and $\abs{W}=\abs{\widehat{W}}$,
   these must be isomorphisms.
   The isomorphism $W \to \widehat{U}$ is the composition
   of the injective maps 
   $w \mapsto B(\cdot, w) \mapsto \lambda\circ B(\cdot, w)$,
   so all these are isomorphisms.
   Similarly, we have an isomorphism
   $W/X^{B} \to \widehat{X}$ 
   for every submodule $X\leq U$.
   Thus $\abs{W} = \abs{X}\abs{X^{B}}$,
   and the rest follows easily.
\end{proof}

The following will often be used without further reference,
especially the case $r=s=1$:
\begin{lemma}
\label{l:ker_senkr}
   Assume \cref{basicsetup} and $g\in \Sp(V)$,
   and let $rs=1_R$ for $r$, $s\in R$.
   Then $\Ker(r-g) = (V(s-g))^{\perp}$.
\end{lemma}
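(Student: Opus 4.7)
The plan is to prove the statement via a single chain of equivalences, based on unfolding the definition of the orthogonal and exploiting the non-degeneracy of $\omega$.

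Fix $v\in V$ and consider the condition $v\in (V(s-g))^{\perp}$. By definition, this means $\omega(v,w(s-g))=0$ for all $w\in V$. Expanding by $R$-bilinearity gives
\[
   0 = \omega(v,ws) - \omega(v,wg) = s\,\omega(v,w) - \omega(v,wg)
   \qquad\text{for all } w\in V.
\]
Now I use that $g\in\Sp(V)$, so $\omega(v,wg) = \omega(vg^{-1},w)$, to rewrite this as $\omega(vs - vg^{-1},\,w) = 0$ for all $w\in V$. By non-degeneracy of $\omega$ (one side of \cref{l:perp_basic}\ref{it:perp_via_mu}), this is equivalent to $vs = vg^{-1}$.

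The final step is to translate $vs=vg^{-1}$ into $v(r-g)=0$. Multiplying on the right by $r$ and using $sr=rs=1_R$ together with the fact that $g$ (and hence $g^{-1}$) is $R$-linear and so commutes with scalar multiplication, one gets $v = vg^{-1}r$, i.e., $vg = vr$, i.e., $v\in\Ker(r-g)$. All steps are reversible, so $\Ker(r-g)=(V(s-g))^{\perp}$ as claimed.

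There is no real obstacle here: the proof is entirely mechanical once one realizes that non-degeneracy can be used to produce an $\iff$ at every stage, avoiding the need for a two-sided inclusion-and-counting argument. (One could alternatively prove $\Ker(r-g)\subseteq (V(s-g))^{\perp}$ directly and then invoke \cref{l:perp_basic}\ref{it:perp_sum} and the identity $|(V(s-g))^{\perp}|=|V|/|V(s-g)|=|\Ker(s-g)|$, but this would force one to verify $|\Ker(r-g)|=|\Ker(s-g)|$ separately; the direct route above is cleaner.)
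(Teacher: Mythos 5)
Your proof is correct and is essentially the paper's own argument: both rewrite $\omega(v,w(s-g))$ as $\omega(v(s-g^{-1}),w)$ using that $g$ preserves $\omega$, invoke non-degeneracy to reduce membership in $(V(s-g))^{\perp}$ to the equation $v(s-g^{-1})=0$, and then use $rs=1_R$ to convert this to $v(r-g)=0$. The only cosmetic difference is that the paper packages the last step as the factorization $s-g^{-1}=(g-r)sg^{-1}$, while you multiply through by $r$ and $g$; these are the same computation.
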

\begin{proof}
   We have $\omega(v,w(s-g)) = \omega(v(s-g^{-1}),w)$.
   Thus the non-degeneracy of $\omega$ yields:
   $v\in (V(s-g))^{\perp}
    \iff 0 = v(s-g^{-1}) = v (g-r)sg^{-1}
    \iff v \in \Ker(r-g)$.
\end{proof}

Following C.~E.~Wall~\cite{Wall63} and 
T.~Thomas~\cite{thomas08,Thomas13},
we introduce a bilinear form on $V(1-g)$ for any 
$g\in \Sp(V)$.

\begin{lemma}\label{l:sigmaform}
    Assume \cref{basicsetup} and let $g\in \Sp(V)$. Define
    \[ B_g (v(1-g),w(1-g)) = 
    \omega ( v,w(1-g) ).
    \]
    Then
    \[ B_g\colon V(1-g)\times V(1-g) 
    \to R
    \]
    is a well-defined, non-degenerate bilinear form
    with
    $B_g(x,y) -B_g(y,x) = \omega(x,y)$
    for all $x$, $y\in V(1-g)$.
\end{lemma}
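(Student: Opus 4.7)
The plan is to verify the four assertions — well-definedness, bilinearity, the skew-symmetry identity, and non-degeneracy — in turn, with everything ultimately resting on the identification $\Ker(1-g)=(V(1-g))^{\perp}$ supplied by \cref{l:ker_senkr} (applied with $r=s=1$).

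For well-definedness, only the first argument needs attention, since $w(1-g)$ appears literally on both sides of the defining formula. If $v(1-g)=v'(1-g)$, then $v-v'\in \Ker(1-g)=(V(1-g))^{\perp}$, so $\omega(v-v',w(1-g))=0$, giving the required equality. Bilinearity is then immediate from bilinearity of $\omega$ and $R$-linearity of $1-g$. Non-degeneracy splits into two symmetric checks. If $B_g(v(1-g),y)=0$ for all $v\in V$ with $y=w(1-g)$, then by definition $\omega(v,y)=0$ for all $v\in V$, and non-degeneracy of $\omega$ forces $y=0$. Conversely, if $B_g(x,w(1-g))=0$ for all $w\in V$ with $x=v(1-g)$, then $\omega(v,V(1-g))=0$, so $v\in (V(1-g))^{\perp}=\Ker(1-g)$ by \cref{l:ker_senkr}, whence $x=v(1-g)=0$.

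For the skew-symmetry relation, I would set $x=v(1-g)$, $y=w(1-g)$ and expand both sides. On the one hand,
\[
  \omega(x,y)=\omega(v,w)-\omega(v,wg)-\omega(vg,w)+\omega(vg,wg),
\]
and because $g\in\Sp(V)$ the last term equals $\omega(v,w)$. On the other hand,
\[
  B_g(x,y)-B_g(y,x)=\omega(v,w(1-g))-\omega(w,v(1-g)),
\]
and applying the alternating property $\omega(w,\cdot)=-\omega(\cdot,w)$ to the second term rewrites this as $2\omega(v,w)-\omega(v,wg)-\omega(vg,w)$, matching the first expansion.

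None of the steps appears genuinely hard; the only subtle point is to recognize at the very beginning that well-definedness is equivalent to the containment $\Ker(1-g)\subseteq (V(1-g))^{\perp}$, which is precisely where \cref{l:ker_senkr} is needed.
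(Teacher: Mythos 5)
Your proof is correct and follows essentially the same direct-verification route as the paper; the only (cosmetic) difference is that you obtain well-definedness from \cref{l:ker_senkr} via $\Ker(1-g)=(V(1-g))^{\perp}$, whereas the paper reads it off from the rearranged symplectic identity $\omega(v\alpha,w\alpha)=\omega(v,w\alpha)+\omega(v\alpha,w)$ with $\alpha=1-g$, which it then reuses for the skew-symmetry relation. Both arguments rest on the same facts, and your expansion of $B_g(x,y)-B_g(y,x)$ matches the paper's computation.
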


We should note here 
that $B_g(x,y) = - \sigma_g(x,y) = - \Theta_g(y,x)$,
where $\sigma_g$ is the form used by T.~Thomas \cite{Thomas13}, 
and $\Theta_g$ is H.~N.~Ward's \cite{Ward17} \enquote{theta form}.
Our convention follows C.~E.~Wall~\cite{Wall63}.

\begin{proof}[Proof of \cref{l:sigmaform}]
    (cf.\ \cite[Lemma~1.1.1, Eq.~1.1.3]{Wall63}, 
    \cite[Lemma~3.4]{Ward17})
    Set $\alpha=1-g$, that is, $g=1-\alpha$. 
    Then the equation
    $\omega (vg , wg)  = \omega( v , w )$ translates to
    \[ \omega( v\alpha , w\alpha )
       = \omega( v, w\alpha ) + \omega( v\alpha, w ).
    \]
    Thus $v_1\alpha=v_2\alpha$ implies 
    $\omega(v_1,w\alpha) = \omega(v_2,w\alpha)$,
    so $B_g$ is well-defined.
    Also, if $w\alpha$ is such that
    $0 = B_g(v\alpha,w\alpha)= \omega(v,w\alpha)$ 
    for all $v\in V$, then
    $w\alpha=0$ by non-degeneracy of $\omega$.
    Thus $B_g$ is non-degenerate.
    Finally, from
    \begin{align*}  
    \omega( v\alpha , w\alpha )
    &= \omega( v, w\alpha ) + \omega( v\alpha, w )
    \\ &= \omega( v, w\alpha ) - \omega( w, v\alpha )
    \\ &= B_g( v\alpha, w\alpha ) 
    - B_g( w\alpha, v\alpha ),
    \end{align*}
    we see $B_g(x,y)-B_g(y,x)=\omega(x,y)$ for 
    $x$, $y\in V\alpha = V(1-g)$.
\end{proof}

When $B\colon X\times X\to R$ is a bilinear form
and $2$ is invertible in $R$,
then we can write $B = B_s + B_a$ 
with $B_s(x,y) = (1/2)\big( B(x,y) + B(y,x) \big)$
symmetric and
$B_a(x,y) = (1/2)\big( B(x,y) - B(y,x) \big)$ 
alternating.
We have seen $(B_g)_a = \omega/2$
(on $V(1-g)$).
In \cref{sec:values},
we need the following facts about the symmetric part
(also known as \emph{Cayley form}~\cite{Thomas13}):

\begin{lemma}\label{l:symformqg}
    Set $Q_g(x,y)= (1/2)\big( B_g(x,y) + B_g(y,x)\big)$.
    Then
    \[ Q_g(x,y)    
        = \frac{1}{2} ( \omega ( vg ,w ) + \omega ( wg ,v ) )
        = \frac{1}{2} B_g(x(1+g),y)
    \]
    for $x=v(1-g)$, $y=w(1-g)$.
    The radical of $Q_g$ is $\Ker(1+g)$.
\end{lemma}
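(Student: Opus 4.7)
The plan is to verify the two chains of equalities by direct symbolic manipulation, then use the third form (the one involving $B_g(x(1+g),y)$) together with the non-degeneracy of $B_g$ from \cref{l:sigmaform} to identify the radical.

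First I would verify that $Q_g(x,y)$ equals $\frac{1}{2}(\omega(vg,w)+\omega(wg,v))$ by expanding the definition: $B_g(x,y)=\omega(v,w)-\omega(v,wg)$ and $B_g(y,x)=\omega(w,v)-\omega(w,vg)$. Adding these, the pair $\omega(v,w)+\omega(w,v)$ cancels because $\omega$ is alternating, and the remaining $-\omega(v,wg)-\omega(w,vg)$ becomes $\omega(wg,v)+\omega(vg,w)$ after flipping signs by the alternating property once more. Dividing by $2$ gives the middle expression.

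For the third expression, I would first note that $(1-g)$ and $(1+g)$ are polynomials in $g$ and hence commute, so $x(1+g)=v(1-g)(1+g)=v(1+g)(1-g)$ still lies in $V(1-g)$; thus $B_g(x(1+g),y)$ is defined via \cref{l:sigmaform}. Expanding $B_g(v(1+g)(1-g),w(1-g))=\omega(v(1+g),w(1-g))$ term by term yields $\omega(v,w)-\omega(v,wg)+\omega(vg,w)-\omega(vg,wg)$, and the first and last terms cancel because $g\in\Sp(V)$ preserves $\omega$. What remains is again $\omega(vg,w)-\omega(v,wg)=\omega(vg,w)+\omega(wg,v)$, twice the value of $Q_g(x,y)$ computed in the previous step, confirming the third equality.

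For the radical, the identity $Q_g(x,y)=\tfrac{1}{2}B_g(x(1+g),y)$ together with the non-degeneracy of $B_g$ on $V(1-g)$ immediately gives that $x$ lies in the radical of $Q_g$ if and only if $x(1+g)=0$. Hence the radical equals $\Ker(1+g)\cap V(1-g)$, and the one remaining point is to check that $\Ker(1+g)\subseteq V(1-g)$. If $v\in\Ker(1+g)$, then $vg=-v$, so $v(1-g)=2v$; since $2$ is invertible in $R$ by \cref{basicsetup}, this reads $v=\tfrac{1}{2}\,v(1-g)\in V(1-g)$. This inclusion is really the only place where the hypothesis $2\in R^\times$ enters, and it is the only step that one might miss on a first pass; I do not expect any serious obstacle beyond keeping track of signs in the first two calculations.
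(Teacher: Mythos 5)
Your proposal is correct and follows essentially the same route as the paper's (much terser) proof: direct expansion of the defining formula for $B_g$ using that $\omega$ is alternating and $g$-invariant, followed by the observation that $\Ker(1+g)\subseteq V(1-g)$ because $v=\tfrac{1}{2}v(1-g)+\tfrac{1}{2}v(1+g)$, and non-degeneracy of $B_g$ to identify the radical. No gaps.
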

\begin{proof}
    The formulas for $Q_g$ can be verified by straightforward
    calculations.
    Notice that $v= (1/2)v(1-g) + (1/2)v(1+g)$ for all $v\in V$,
    and thus $\Ker(1+g)\subseteq V(1-g)$.
    The claim on the radical follows from
    $B_g$ being non-degenerate.
\end{proof}

\section{The symplectic algebra}
\label{sec:sympl_alg}

In this section, we extend definitions and results
of  Ward~\cite{ward72,Ward17} from the situation
where $R$ is a finite field to our more general situation.
The proofs are essentially the same.

\begin{defi}
   Assume \cref{basicsetup} and let 
   $\crp{K}\subseteq \compl^*$ 
   be a field containing the values of 
   $\lambda$.
   The \defemph{symplectic algebra} 
   $\cA = \cA(\crp{K},V, \lambda\circ\omega)$ 
   is the twisted group ring of $V$ over $\crp{K}$
   with factor set $\lambda\circ(\frac{1}{2}\omega)$.
   This means that $\cA$ has a $\crp{K}$-basis 
   \[   \menge{ b_v }{ v\in V }
   \]
   indexed by $V$, and multiplication is given by
   \begin{equation}  
      b_vb_w = \lambda(\tfrac{1}{2}\omega( v , w ))\, b_{v+w}
             = (\lambda\circ\omega)
                 \left( v/2,w \right)\, b_{v+w}.
   \label{equ:multbv}
   \end{equation}
   (Since $2$ is invertible in $R$, this makes sense.)
\end{defi}

The factor $\frac{1}{2}$ is not present in Ward's definition.
We have introduced this factor 
for consistency with the usual definition of 
the Heisenberg group.
The  \defemph{Heisenberg group}
$H=H(V,R,\omega)$
is the set
$V\times R$ with  multiplication defined by
\[(v,r)(w,s) = (v+w, r+s+ \frac{1}{2}\omega( v , w )).\]
It is routine to verify that $H$ is a group with this
multiplication,
with center $\Z(H) = 0\times R\iso R$.
Moreover, 
$(v,r) \mapsto b_v \lambda(r)$ is a group homomorphism
from $H$ into the unit group of $\cA$,
with kernel $\Ker \lambda$.
As we will see below,
$\cA$ is isomorphic to a matrix ring over $\crp{K}$.
The resulting representation of $H$ is known as the
\emph{Schrödinger representation of type $\lambda$,}
and is the unique irreducible representation of $H$
lying over $\lambda$.
These facts allow to translate between the approach taken here
and others like the one by A.~Prasad~\cite{prasad09p}.

\begin{lemma}
\label{l:symplalgformulas}
    The algebra $\cA$ has the following properties:
    \begin{enumerate}
    \item $b_0 =1_{\cA}$ is the identity.
    \item Every $b_v$ is a unit with inverse
         $(b_v)^{-1}= b_{-v}$.
    \item $b_w^{-1} b_v b_w = \lambda(\omega( v , w ))\, b_v$.
    \label{it:conjbv}
    \item \label{it:symplalgcenter}
          The center of $\cA$ is
          $\Z(\cA) = \crp{K} 1_{\cA}$.
    \end{enumerate}
\end{lemma}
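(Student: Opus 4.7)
The plan is to verify each property by direct computation from the defining relation \eqref{equ:multbv}, then deduce the center from property~\ref{it:conjbv} together with the non-degeneracy of~$\omega$ as encoded in \cref{l:perp_basic}\ref{it:perp_via_mu}.

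First I would settle (a) by plugging $v=0$ into \eqref{equ:multbv}: since $\omega(0,w)=\omega(w,0)=0$ and $\lambda(0)=1$, one gets $b_0 b_w = b_w = b_w b_0$ for every~$w\in V$. As $\{b_w\}_{w\in V}$ is a $\crp{K}$-basis of $\cA$, this forces $b_0=1_{\cA}$. For (b), I would use that $\omega$ is alternating, so $\omega(v,-v)=-\omega(v,v)=0$, and \eqref{equ:multbv} gives $b_v b_{-v} = \lambda(0)\,b_0 = 1_{\cA}$, and similarly on the other side.

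For (c), the direct calculation is
\[
    b_{-w}b_v b_w
    = \lambda\!\left(\tfrac{1}{2}\omega(-w,v)\right) b_{v-w} b_w
    = \lambda\!\left(\tfrac{1}{2}\omega(-w,v)
                     + \tfrac{1}{2}\omega(v-w,w)\right) b_v,
\]
and the exponent collapses to $\omega(v,w)$ upon using $\omega(w,w)=0$ and the alternating property (which gives $\omega(-w,v) = \omega(v,w)$). Combined with (b), this yields the formula.

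The one place a small argument is needed is (d). I would take an arbitrary central element $a = \sum_{v\in V} c_v b_v$ and, using (c), compute
\[
    b_w^{-1} a b_w = \sum_{v\in V} c_v \lambda(\omega(v,w))\, b_v.
\]
Comparing coefficients with~$a$ forces $\lambda(\omega(v,w))=1$ for every $w\in V$ whenever $c_v\neq 0$. By \cref{l:perp_basic}\ref{it:perp_via_mu}, applied to the non-degenerate form~$\omega$, this means $v\in V^\omega = 0$, so only $c_0$ can be nonzero, and $a=c_0\,1_{\cA}\in \crp{K}1_{\cA}$. The reverse inclusion is obvious. There is no real obstacle; the only subtlety is to remember to invoke primitivity of~$\lambda$ via \cref{l:perp_basic}\ref{it:perp_via_mu} rather than trying to argue directly that $\omega(v,\cdot)\equiv 0$.
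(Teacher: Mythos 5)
Your proof is correct and follows exactly the route the paper intends: parts (a)--(c) are the direct computations from the multiplication rule \eqref{equ:multbv}, and part (d) is deduced from (c) by comparing coefficients and invoking primitivity of $\lambda$ together with non-degeneracy of $\omega$ via \cref{l:perp_basic}\ref{it:perp_via_mu} (the paper's proof says precisely this, only without writing out the calculations).
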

\begin{proof}
    Easy calculations.
    \ref{it:symplalgcenter} follows from
    \ref{it:conjbv} 
    and the non-degeneracy of $\lambda\circ\omega$.
\end{proof}

\begin{prop}[cf.~{\cite[Theorem~1.3]{ward72}}]
\label{p:sympl_alg_matring}
   The symplectic algebra $\cA$ is isomorphic 
   to the  
   $n \times n$ matrix algebra over $\crp{K}$,
   where $n^2= \abs{V}$.
\end{prop}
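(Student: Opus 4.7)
The plan is to realise $\cA$ as a matrix algebra by two independent moves: first, to show that $\cA$ is simple as a $\crp{K}$-algebra, and then to exhibit a faithful left $\cA$-module of $\crp{K}$-dimension $n$ with $n^2 = \abs{V}$. With both in hand, the algebra homomorphism $\cA \to \enmo_{\crp{K}}(\cA e_L)$ given by left multiplication must be injective (its kernel is a proper two-sided ideal of the simple ring $\cA$), and a comparison of $\crp{K}$-dimensions, $\dim_{\crp{K}} \cA = \abs{V} = n^2 = \dim_{\crp{K}} \enmo_{\crp{K}}(\cA e_L)$, forces it to be an isomorphism onto $\mat_n(\crp{K})$.

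For simplicity, I would take a nonzero two-sided ideal $I \nteq \cA$ and any $0 \neq a = \sum_{v\in V} a_v b_v \in I$. By the conjugation formula in \cref{l:symplalgformulas}\ref{it:conjbv}, the average
\[
\frac{1}{\abs{V}} \sum_{w\in V} b_w^{-1} a b_w
= \sum_{v\in V} a_v \Bigl( \frac{1}{\abs{V}} \sum_{w\in V} \lambda(\omega(v,w)) \Bigr)\, b_v
\]
lies in $I$. Non-degeneracy of $\omega$ combined with \cref{l:perp_basic}\ref{it:perp_via_mu} forces the inner sum to equal $\abs{V}$ for $v=0$ and to vanish otherwise, so $a_0\cdot 1_{\cA} \in I$. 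If $a_0 = 0$ initially, I first replace $a$ by $b_{-v_0} a$ for some $v_0$ in the support of $a$ so that the new coefficient of $b_0$ is nonzero; then $1_{\cA} \in I$ and $I = \cA$.

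For the module, I would construct a Lagrangian submodule $L \leq V$ (that is, $L = L^{\perp}$) inductively. Starting from $L = 0$: if $U$ is totally isotropic with $U \neq U^{\perp}$, any $v \in U^{\perp} \setminus U$ yields a strictly larger totally isotropic submodule $U + Rv$, since $\omega$ vanishes on $U\times U$ by hypothesis, on $U \times Rv$ and $Rv \times U$ because $v \in U^{\perp}$, and on $Rv\times Rv$ because $\omega$ is alternating. By finiteness this terminates at $L = L^{\perp}$, and \cref{l:perp_basic} then yields $\abs{V} = \abs{L}\cdot\abs{L^{\perp}} = \abs{L}^2$. Setting $n := \abs{L}$, the element
\[
e_L := \frac{1}{\abs{L}} \sum_{v\in L} b_v
\]
is idempotent because the factor set $\lambda\circ(\omega/2)$ from \cref{equ:multbv} is trivial on $L\times L$; moreover $b_\ell e_L = e_L$ for every $\ell \in L$, so for a transversal $T$ of $L$ in $V$ the vectors $b_t e_L$ with $t \in T$ span $\cA e_L$ and are $\crp{K}$-linearly independent, since the basis elements $b_u$ appearing in $b_t e_L$ lie in the coset $t+L$ and distinct cosets give disjoint supports. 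Hence $\dim_{\crp{K}} \cA e_L = \abs{T} = n$, and the opening dimension count concludes the proof.

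I expect the main technical point to be the Lagrangian enlargement step, namely verifying that it carries over from vector spaces to submodules of an $R$-module; but the step uses only bilinearity of $\omega$ and $\omega(v,v)=0$, so nothing essentially changes from Ward's original argument for finite fields.
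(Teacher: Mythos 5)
Your proof is correct, but it reaches the conclusion by a different route than the paper. You split the work into (i) simplicity of $\cA$, proved by averaging $b_w^{-1}ab_w$ over $w\in V$ and using non-degeneracy of $\lambda\circ\omega$ to isolate the $b_0$-coefficient, and (ii) exhibiting the faithful $n$-dimensional module $\cA e_L$, after which a dimension count identifies $\cA$ with $\enmo_{\crp{K}}(\cA e_L)\iso\mat_n(\crp{K})$; all the individual steps check out (the preliminary multiplication by $b_{-v_0}$ does make the $b_0$-coefficient equal to $a_{v_0}$ since $\omega(-v_0,v_0)=0$, the Lagrangian enlargement only needs bilinearity and $\omega(v,v)=0$ as you say, and the coset-support argument gives the independence of the $b_te_L$). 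The paper instead works entirely inside $\cA$: starting from the same Lagrangian $L$ and the same idempotent $e=e_L$, it proves the extra identity $eb_te=0$ for $t\notin L$ and uses it to manufacture an explicit full set of matrix units $e_{st}=b_s^{-1}eb_t$ indexed by a transversal $T$, verifying $e_{st}e_{qr}=\delta_{tq}e_{sr}$ and $\sum_t e_{tt}=1$. Your version trades that computation for the simplicity argument and is arguably more conceptual; the paper's version buys a concrete isomorphism $\cA\iso\mat_T(\crp{K})$ attached to the pair $(L,T)$, which is exploited later (in the remark following \cref{c:gauss_ae}, where $P(g)$ is matched with the Fourier-type matrix $F_{\lambda}(q)$ under exactly this isomorphism). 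One cosmetic remark: the parenthesized ``inner sum'' in your display already carries the factor $1/\abs{V}$, so it equals $1$ (not $\abs{V}$) at $v=0$; this does not affect the conclusion $a_0\cdot 1_{\cA}\in I$.
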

\begin{proof}
   Let $L\leq V$ be a Lagrangian submodule,
   that is, 
   a submodule with $L^{\perp} = L$.
   (Notice that any submodule maximal subject to
    $L\subseteq L^{\perp}$ is Lagrangian.)   
   Then $\abs{V} = \abs{L}^2$
   by \cref{l:perp_basic}.
   Set 
   \[ e:= \frac{1}{\abs{L}} \sum_{x\in L} b_x.
   \]
   Then $eb_x=b_xe=e$ for $x\in L$ and $e^2 = e$.
   For $t\notin L$, we have
   \begin{align*}
     eb_te &= \frac{1}{\abs{L}^2} 
              \sum_{x,y\in L} b_xb_tb_y
       \\  &= \frac{1}{\abs{L}^2} 
              b_t \sum_{x,y\in L} \lambda(\omega( x , t ))b_xb_y
           && \text{(\cref{l:symplalgformulas}\ref{it:conjbv})}
        \\ &= \frac{1}{\abs{L}^2} 
                b_t \sum_{z,x\in L} \lambda(\omega( x , t ))b_{z}
            =0,
   \end{align*}
   where the last equality follows since
   $x\mapsto \lambda(\omega ( x ,t ))$
   is a nontrivial character of $L$ for $t\notin L$.
   
   Now let $T$ be a set of coset representatives of $L$ in $V$,
      that is,
      $V = \bigdcup_{t\in T } (L+t)$,   
   and form the set of elements
   \[ e_{st}:= b_s^{-1}e b_t
      , \quad s,t\in T.
   \]
   From the above it follows that
   $e_{st}e_{qr}=\delta_{tq}e_{sr}$.
   Moreover, using \cref{l:symplalgformulas}\ref{it:conjbv},
   we get
   \begin{align*}
     \sum_{t\in T} e_{tt}
     &= \sum_{t\in T} \frac{1}{\abs{L}} \sum_{x\in L} b_t^{-1}b_x b_t
     = \frac{1}{\abs{L}} 
        \sum_{x\in L} \sum_{ t\in T } 
       \lambda(\omega(x,t)) b_x
      = b_0 = 1_{\cA}.
   \end{align*}   
   (Here we use that
    $t\mapsto \lambda(\omega(x,t))$ is a nontrivial character
    of $V/L$ for $x\neq 0$.)
   Therefore, the elements $ e_{st}$ for $s$, $t\in T$
   form a full set of matrix units,
   and their $\crp{K}$-span is a 
   $\abs{T}\times \abs{T}$ matrix ring over $\crp{K}$
   \cite[17.4, 17.5]{lamMR}.
   Since $e_{ss} b_v = \mu(s,v) b_s^{-1}eb_t =  \mu(s,v) e_{st}$ 
   for some
   $\mu(s,v) \in \crp{K}$ 
   and the $t\in T$ with $L+s+v=L+t$,
   we see that $b_v = \sum_s e_{ss}b_v$ is in the $\crp{K}$-span 
   of the $e_{st}$'s
   and so the matrix units span $\cA$.   
   As $\abs{V}= \abs{L}\abs{T}= \abs{T}^2$, the proposition follows.
\end{proof}

\begin{remark}
    The last result and its proof remain valid when
    $\crp{K}$ is a ring such that $\abs{V}$ is invertible in $\crp{K}$
    and such that there exists a primitive character 
    $\lambda \colon R \to \crp{K}^*$.
\end{remark}

Since $\cA $ is a matrix ring, it has a trace function
$\tr\colon\cA \to \crp{K}$.
We need the trace on the canonical basis.
\begin{lemma}\label{l:tracesx}
    $\tr b_v = n\delta_{v,0}$, where $n=\sqrt{ \abs{V} } $.
\end{lemma}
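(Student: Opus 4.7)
The plan is to split into the cases $v=0$ and $v\neq 0$ and exploit the conjugation formula from \cref{l:symplalgformulas}\ref{it:conjbv}. Recall that the proof of \cref{p:sympl_alg_matring} already exhibited a full system of matrix units $(e_{st})_{s,t\in T}$ for $\cA$ satisfying $\sum_{t\in T} e_{tt}=b_0=1_{\cA}$. Since the trace on an $n\times n$ matrix algebra (normalized so that $1_\cA$ has trace $n$) sends $e_{tt}$ to $1$, this immediately yields $\tr b_0 = \abs{T} = \sqrt{\abs{V}}=n$, handling the case $v=0$.

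For $v\neq 0$, I would use the fact that the ring-theoretic trace on a matrix algebra is invariant under conjugation by units. Applying \cref{l:symplalgformulas}\ref{it:conjbv} with an arbitrary $w\in V$ gives
\[
  \tr b_v \;=\; \tr(b_w^{-1} b_v b_w) \;=\; \lambda(\omega(v,w))\,\tr b_v,
\]
so $(1-\lambda(\omega(v,w)))\tr b_v = 0$ for every $w\in V$. If $\tr b_v\neq 0$, this would force $\lambda(\omega(v,w))=1$ for all $w\in V$, i.e.\ $v\in {}^{\omega}V$ in the notation of \cref{l:perp_basic}\ref{it:perp_via_mu}. But $\omega$ is non-degenerate and $\lambda$ is primitive, so by that lemma we must have $v=0$, contradicting the assumption. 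Hence $\tr b_v = 0$, completing the proof.

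There is no real obstacle: the only subtlety is to be explicit about which trace we mean (the canonical trace on the matrix ring $\cA\iso\mat_n(\crp{K})$, under which $\tr 1_\cA = n$), and this is pinned down by the explicit isomorphism produced in \cref{p:sympl_alg_matring}. Everything else is a one-line consequence of the conjugation relation plus the non-degeneracy assumptions built into \cref{basicsetup}.
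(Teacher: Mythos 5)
Your proof is correct and follows essentially the same route as the paper: $\tr b_0=n$ because $b_0=1_{\cA}$ in a rank-$n$ matrix algebra, and for $v\neq 0$ the conjugation relation $b_w^{-1}b_vb_w=\lambda(\omega(v,w))b_v$ together with the non-degeneracy of $\lambda\circ\omega$ forces $\tr b_v=0$. The only difference is that you spell out the two ingredients (the matrix units for the normalization of the trace, and \cref{l:perp_basic} for non-degeneracy) that the paper leaves implicit.
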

\begin{proof}
    $\tr b_0= n$ is clear since $b_0=1_{\cA}$.
    For $v\neq 0$, there exists $w\in V$
    with $\lambda(\omega(v,w))\neq 1$.
    Thus $\tr b_v = 0$ for $v\neq 0$
    follows from \cref{l:symplalgformulas}\ref{it:conjbv}.
\end{proof}

\section{The Weil representation}
\label{sec:weilrep}

There is a natural action of $\Sp_R(V,\omega)=\Sp(V)$
on $\cA$,
namely
\[ \left(\sum_{v\in V} c_v b_v \right)^g  = 
   \sum_{v\in V} c_v b_{vg}
    \quad (c_v\in \crp{K}).
\]
The usual construction of the Weil representation is as follows:
Since $\cA \iso \mat_n(\crp{K})$, 
there exists, for any $g\in \Sp(V)$, an invertible element
$P(g)\in \cA$ (unique up to scalars), such that
$b_{vg} = b_v^{P(g)}$ for all
$v\in V$.
We can view
$P\colon \Sp(V)\to \cA^*\iso \GL(n,\crp{K})$
as a projective representation,
and one can show that this can be made into an honest
representation.

In this paper, we take a more constructive approach,
which is essentially due to 
H.~N.~Ward \cite[Proposition~2.1]{ward72},
who considers the case where $R$ is a finite field.
The same idea works in our more
general situation.
We use the form $B_g$ introduced in \cref{l:sigmaform}.

\begin{thm}\label{p:weilconcret}
  For $g\in \Sp(V)$, 
  \begin{equation} 
  \label{equ:WardsP}
     P(g)
      = \sum_{x\in V(1-g) } 
            \lambda(\tfrac{1}{2} B_g(x,x)) 
            \cdot b_x  \in \cA
  \end{equation} 
  is invertible 
  and we have
  $(b_v)^{P(g)} = b_{vg}$ for all $v\in V$.
  For $g$, $h\in \Sp(V)$, we have 
  $P(g)P(h)=c(g,h)P(gh)$ with
  \[ c(g,h) = \sum_{x\in V(1-g)\cap V(1-h)}
              \lambda\left( \tfrac{1}{2}
              ( B_g(x,x) +  B_h(x,x) 
              ) 
              \right) .
  \]
\end{thm}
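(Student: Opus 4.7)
My plan is to first verify the conjugation identity $b_v P(g) = P(g) b_{vg}$ by a direct calculation in $\cA$, then establish invertibility of $P(g)$ through a separate computation, and finally deduce the multiplicative relation by exploiting that the centralizer of all the $b_v$ inside $\cA$ is just $\crp{K}\cdot 1_{\cA}$.

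\emph{Step 1 (Conjugation).} Expanding $b_v P(g)$ and $P(g) b_{vg}$ via the multiplication rule \cref{equ:multbv}, the coefficient of $b_u$ on the left (with $u=v+x$, $x\in V(1-g)$) equals $\lambda(\tfrac{1}{2}(B_g(y,y)+\omega(v,y)))$ for $y=u-v$, while on the right (with $u=x+vg$, $x\in V(1-g)$) it equals $\lambda(\tfrac{1}{2}(B_g(y',y')+\omega(y',vg)))$ for $y'=u-vg = y + v(1-g)$. The required identity
\[ B_g(y,y) + \omega(v,y) = B_g(y+v(1-g),\, y+v(1-g)) + \omega(y+v(1-g),\, vg) \]
will follow by expanding bilinearly and using the defining relation $B_g(a(1-g),b(1-g)) = \omega(a,b(1-g))$ together with the antisymmetry $B_g(x,y)-B_g(y,x)=\omega(x,y)$ from \cref{l:sigmaform}; the cross terms cancel cleanly.

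\emph{Step 2 (Invertibility).} I expect this to be the main obstacle, as the conjugation property alone does not force $P(g)$ to be a unit. My approach is to introduce the ``conjugate'' element
\[ P(g)^{\dagger} := \sum_{y\in V(1-g)} \lambda(-\tfrac{1}{2}B_g(y,y))\, b_y, \]
and compute $P(g)^{\dagger}P(g)$. After one application of $B_g(y,z)-B_g(z,y)=\omega(y,z)$, the coefficient of $b_z$ collapses to
\[ \lambda(\tfrac{1}{2}B_g(z,z)) \sum_{y\in V(1-g)} \lambda(-B_g(z,y)). \]
The inner character sum over the additive group $V(1-g)$ is trivial iff $B_g(z,\cdot)\equiv 0$ on $V(1-g)$ by \cref{l:perp_basic}\ref{it:perp_via_mu}, and non-degeneracy of $B_g$ then forces $z=0$. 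Hence $P(g)^{\dagger}P(g) = \abs{V(1-g)}\cdot 1_{\cA}$, proving that $P(g)$ is invertible.

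\emph{Steps 3 and 4 (Multiplicativity and identifying $c(g,h)$).} Step 1 applied to $g$ and $h$ in turn yields $b_v\,P(g)P(h) = P(g)P(h)\,b_{vgh}$, while Step 1 applied to $gh$ gives $b_v P(gh) = P(gh)\,b_{vgh}$. Since $P(gh)$ is invertible by Step 2, the element $P(g)P(h)P(gh)^{-1}$ commutes with every $b_v$, so it lies in $\Z(\cA) = \crp{K}\cdot 1_{\cA}$ by \cref{l:symplalgformulas}\ref{it:symplalgcenter}; call this scalar $c(g,h)$. To identify it, I compare the coefficient of $b_0$ on both sides of $P(g)P(h) = c(g,h)P(gh)$: the right side contributes just $c(g,h)$, while on the left the pairs $(x,y)\in V(1-g)\times V(1-h)$ with $x+y=0$ force $y=-x$ and hence $x\in V(1-g)\cap V(1-h)$, and using $\omega(x,-x)=0$ together with $B_h(-x,-x) = B_h(x,x)$ the coefficient becomes exactly the claimed sum.
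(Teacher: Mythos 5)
Your proposal is correct and follows essentially the same route as the paper: conjugation identity, invertibility via an explicit ``adjoint'' element, then centrality of $P(g)P(h)P(gh)^{-1}$ and comparison of the $b_0$-coefficients. The only cosmetic differences are that you verify $b_vP(g)=P(g)b_{vg}$ by direct coefficient comparison (the paper instead rewrites $P(g)=\frac{1}{\abs{\C_V(g)}}\sum_v b_vb_v^{-g}$ and changes variables), and that your $P(g)^{\dagger}$ is exactly $P(g^{-1})$ (since $V(1-g^{-1})=V(1-g)$ and $B_{g^{-1}}(x,x)=-B_g(x,x)$), so your Step~2 computes the same product the paper evaluates via $c(g,g^{-1})=\abs{V(1-g)}$; the sketched identities in Steps~1 and~2 do check out.
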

\begin{proof}
  Let $v\in V$.
  Then
  \begin{align*}
    b_v \cdot b_{v}^{-g}
      &= b_v \cdot b_{-vg} 
      = \lambda( \tfrac{1}{2}\omega ( v , -vg ) ) \cdot b_{v-vg}
       = \lambda( \tfrac{1}{2}B_g(x,x)) \cdot b_x
  \end{align*}
  for $x=v(1-g)$.
  This shows that
  \begin{equation} 
    P(g) = \frac{1}{\abs{\C_V(g)}} 
            \sum_{v\in V} b_v b_{v}^{-g}.
  \label{equ:WardsPohneBg}
  \end{equation}
  It follows
  \begin{align*} 
       b_w \cdot P(g) \cdot b_{w}^{-g}
       &= \frac{1}{\abs{\C_V(g)}}
          \sum_{v\in V} \lambda(\tfrac{1}{2}\omega(w,v))
                        \,b_{w+v} 
                        \,
                        \,\big( \lambda( \tfrac{1}{2}\omega(w,v)
                                       )
                              b_{w+v}
                          \big)^{-g}
      \\ &= \frac{1}{\abs{\C_V(g)}}
            \sum_{v\in V} b_{w+v} b_{w+v}^{-g} 
          = P(g)
  \end{align*}
  and thus
  $ b_w \cdot P(g) P(g^{-1}) = P(g) P(g^{-1}) \cdot b_w$ 
  for all $w\in V$.
  Therefore, 
  $ P(g) P(g^{-1}) \in \Z(\cA ) = \crp{K}b_0$.
  Thus to show that $P(g)$ is invertible, it suffices to show
  that the coefficient of $b_0$ in
  $P(g)P(g^{-1})$ is not zero.  
  
  More generally, for $g$, $h\in \Sp(V)$,
  let $c(g,h)$ be the coefficient of $b_0$ in
  $P(g)P(h)$. 
  Then
   \begin{align*}
      c(g,h) &=
         \sum_{x\in V(1-g) \cap V(1-h)}
          \lambda( \tfrac{1}{2} B_g(x,x) ) 
          \lambda( \tfrac{1}{2} B_h(-x,-x) )
          \lambda( \tfrac{1}{2} \omega(x,-x) )
      \\ &= \sum_{x\in V(1-g)\cap V(1-h)}
             \lambda\left( \tfrac{1}{2}
                         ( B_g(x,x) +  B_h(x,x) ) 
                    \right).
    \end{align*}
    For $h=g^{-1}$, the equality 
    $v(1-g) = -vg(1-g^{-1})$ yields   
   $V(1-g) = V(1-g^{-1})$,
   and 
   $ B_{g^{-1}}(x,x) = -B_g(x,x)$ for all $x\in V(1-g)$.
   Thus $c(g,g^{-1}) = \abs{ V(1-g)} \neq 0$,
   and $P(g)$ is invertible.
   
   Since also 
   $P(g)P(h)P(gh)^{-1}\in \Z(\cA)=\crp{K}b_0$
   and the coefficient of $b_0$ in $P(gh)$ is $1$,
   we see that $P(g)P(h)=c(g,h)P(gh)$ as claimed. 
   The proof is complete.
\end{proof}

Although it is well known,
even in this generality~\cite{climcnsze00,i73,prasad09p}, 
we give here the simple proof 
that $P$ can be made into an honest representation.
We give the proof because
at the same time we define
\enquote{the} \emph{canonical} Weil representation.

\begin{lemma}\label{l:ccinv}   
   Set
   \[ T
        = \frac{1}{ \sqrt{\abs{V}} } 
          \sum_{v\in V} b_v.
   \]
   Then $T^2 = 1_{\cA}$, $\tr T = 1$ and
   $TP(g)=P(g)T$ for all $g\in G$.
\end{lemma}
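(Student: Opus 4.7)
The plan has three essentially independent pieces. For $\tr T = 1$, I would just invoke \cref{l:tracesx}: only the $v=0$ summand contributes, and $\tr b_0 = n = \sqrt{\abs{V}}$, which exactly cancels the normalizing factor.

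For $T^2 = 1_{\cA}$, I would expand
\[
T^2 = \frac{1}{\abs{V}} \sum_{v,w\in V} b_v b_w
    = \frac{1}{\abs{V}} \sum_{v,w\in V}
      \lambda(\tfrac{1}{2}\omega(v,w))\, b_{v+w},
\]
then substitute $u = v+w$ to rewrite the coefficient of $b_u$ as $\frac{1}{\abs{V}}\sum_{v\in V}\lambda(\tfrac{1}{2}\omega(v,u))$. The key observation is that for fixed $u$, the map $v\mapsto \lambda(\tfrac{1}{2}\omega(v,u))$ is a character of $V$; it is trivial when $u=0$ and (this is the one step requiring care) nontrivial when $u\neq 0$. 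The latter follows because $\omega(V,u)$ is an ideal of $R$ which is nonzero by non-degeneracy of $\omega$, and then $\lambda$ must be nontrivial on it by primitivity of $\lambda$ (exactly the argument used in \cref{l:perp_basic}\ref{it:perp_via_mu}). So only $u=0$ survives and $T^2 = b_0 = 1_{\cA}$.

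For the commutation $TP(g)=P(g)T$, the cleanest route is the conjugation relation from \cref{p:weilconcret}: $(b_v)^{P(g)} = b_{vg}$ rewrites as $b_v P(g) = P(g) b_{vg}$. Summing over $v\in V$ gives
\[
T P(g) = \frac{1}{\sqrt{\abs{V}}}\sum_{v\in V} b_v P(g)
       = \frac{1}{\sqrt{\abs{V}}}\sum_{v\in V} P(g) b_{vg}
       = P(g) T,
\]
since $v\mapsto vg$ permutes $V$.

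None of the three steps looks hard; the only place where one must think is the orthogonality argument for $T^2$, where the non-degeneracy of $\omega$ combined with primitivity of $\lambda$ is needed to conclude that the character $v\mapsto \lambda(\tfrac{1}{2}\omega(v,u))$ is nontrivial for $u\neq 0$. Everything else is formal manipulation of the basis $\{b_v\}$ using the twisted multiplication \cref{equ:multbv} and the intertwining property of $P(g)$ already established.
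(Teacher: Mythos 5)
Your proposal is correct and matches the paper's proof: the paper likewise gets $\tr T=1$ from \cref{l:tracesx}, gets $TP(g)=P(g)T$ from $(b_v)^{P(g)}=b_{vg}$, and for $T^2=1_{\cA}$ offers exactly the ``straightforward computation in $\cA$'' that you carry out (including the point that non-degeneracy of $\omega$ plus primitivity of $\lambda$ kills all terms with $u\neq 0$). The paper also notes an alternative for $T^2=1_{\cA}$, namely $T=\bigl(1/\sqrt{\abs{V}}\bigr)P(-\id_V)$ combined with \cref{p:weilconcret}, but your direct expansion is equally valid.
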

\begin{proof}   
    That $T^2 = 1_{\cA}$ 
    follows from a straightforward computation
    in $\cA$, 
    or alternatively from
    $T = \left(1/\sqrt{\abs{V}} \right) P(-\id_V)$ 
    and
    \cref{p:weilconcret}.
    For the trace, use \cref{l:tracesx}.
    Finally,
    $T^{P(g)} = T$ is clear from
    $(b_v)^{P(g)} = b_{vg}$.
\end{proof}

By \cref{p:sympl_alg_matring},
$\cA$ is a matrix ring. 
Let $M$ be a simple $\cA$-module,
so $M\iso \crp{K}^{\sqrt{\abs{V}}}$. 
Let $E_{\pm}$ be the eigenspaces of $T$ in $M$
for the eigenvalues
$\pm 1$. 
Then $M = E_{+} \oplus E_{-}$.
When $c\in \cA$  centralizes $T$,
then $c$ maps $E_{\pm}$ into itself.
In particular, the $E_{\pm}$ are invariant under
$P(g)$ for all $g\in \Sp(V)$.

\begin{thm}\label{t:weilexist}
    Let $T$, $M$ and $E_{\pm}$ be as above.
    Then for any $g\in \Sp(V)$, there is a unique
    $W(g)\in \cA^*$ such that
    \begin{enumerate}
    \item \label{it:weilop}
          $ (b_v)^{W(g)} = b_{vg}$ for all $v \in V$, and
    \item \label{it:weildetpm}
          $\det W(g)_{|E_{+}} = \det W(g)_{|E_{-}}$.
    \end{enumerate}
    Moreover,
    $W(gh)=W(g)W(h)$ for all $g$, $h\in \Sp(V)$.
\end{thm}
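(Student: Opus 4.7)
The plan is to reduce the existence and uniqueness statements to the determination of a single scalar. We already have the element $P(g)$ from \cref{p:weilconcret}, which satisfies condition~(i); my goal is to argue that any $W(g)$ satisfying~(i) must be a scalar multiple of $P(g)$, and that~(ii) pins down the scalar uniquely. Multiplicativity will then drop out of uniqueness.

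If $W(g)\in \cA^{*}$ satisfies~(i), then $P(g)W(g)^{-1}$ centralizes every $b_v$, hence lies in $\Z(\cA)=\crp{K}\cdot 1_{\cA}$ by \cref{l:symplalgformulas}\ref{it:symplalgcenter}. So $W(g)=\alpha_g P(g)$ for some $\alpha_g\in \crp{K}^{*}$. Since $P(g)$ commutes with $T$ (by \cref{l:ccinv}), it preserves the eigenspaces $E_{\pm}$, and I set $d_{\pm}(g):=\det P(g)_{|E_{\pm}}\in \crp{K}^{*}$. Writing $n_{\pm}:=\dim_{\crp{K}}E_{\pm}$, condition~(ii) for $W(g)=\alpha_g P(g)$ becomes
\[ \alpha_g^{\,n_+}\, d_+(g) \;=\; \alpha_g^{\,n_-}\, d_-(g). \]

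The key numerical input is $n_+ - n_-$. Because $2$ is invertible in $R$, the module $V$ has odd order, so $n:=\sqrt{\abs{V}}=\dim_{\crp{K}}M$ is odd. From $T^{2}=1_{\cA}$ we get $M=E_+\oplus E_-$, so $n_++n_-=n$, while $\tr T = 1$ (\cref{l:ccinv}) gives $n_+-n_-=1$. The displayed equation therefore collapses to $\alpha_g = d_-(g)/d_+(g)$, which has a unique solution in $\crp{K}^{*}$ (both $d_\pm(g)$ are nonzero because $P(g)\in \cA^{*}$). This simultaneously establishes existence and uniqueness of $W(g)$.

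For multiplicativity, I just check that $W(g)W(h)$ meets both conditions for $gh$ and invoke uniqueness. Condition~(i) is immediate from $(b_v)^{W(g)W(h)}=(b_{vg})^{W(h)}=b_{vgh}$. Both $W(g)$ and $W(h)$ preserve $E_{\pm}$ (being scalar multiples of $P(g)$ and $P(h)$), so $W(g)W(h)$ does too, and $\det(W(g)W(h))_{|E_{\pm}} = \det W(g)_{|E_{\pm}}\cdot\det W(h)_{|E_{\pm}}$; condition~(ii) for each factor forces the $+$ and $-$ determinants of the product to agree, i.e.\ condition~(ii) for $W(g)W(h)$. Uniqueness then yields $W(g)W(h)=W(gh)$. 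There is no real obstacle beyond the dimension computation $n_+-n_-=\tr T=1$; the rest of the argument is bookkeeping.
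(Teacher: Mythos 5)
Your proposal is correct and follows essentially the same route as the paper: reduce to the fact that any $W(g)$ satisfying (a) is a scalar multiple of $P(g)$ (via $\Z(\cA)=\crp{K}1_{\cA}$), use $d_+-d_-=\tr T=1$ to show condition (b) determines the scalar uniquely, and obtain multiplicativity from uniqueness together with the multiplicativity of the determinant ratio on $E_\pm$. The paper packages the scalar as $\eta(P(g))^{-1}$ where $\eta(c)=\det c_{|E_+}(\det c_{|E_-})^{-1}$, which is exactly your $d_-(g)/d_+(g)$.
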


\begin{proof}
    Let $d_{\pm} = \dim_{\crp{K}} E_{\pm}$.
    Then $d_+-d_- = \tr T = 1$.
    For $c\in \cA^*$ with $cT=Tc$, 
    consider 
    $\eta(c):= \det c_{|E_{+}} \cdot (\det c_{|E_{-}})^{-1}$.
    For a scalar $\mu$, we have
    $\eta(\mu c) = \mu^{d_+ - d_-} \eta(c) = \mu \eta(c)$. 
    It follows that $\eta(\eta(c)^{-1}c) = 1$
    and that this is the unique scalar multiple of $c$
    for which $\eta$ is $1$.
    
    Let $P(g)$ be as in \cref{p:weilconcret}.
    Then $W(g):= \eta(P(g))^{-1}P(g)$
    is the unique element in $\cA^*$ satisfying
    \ref{it:weilop} and \ref{it:weildetpm}.
    Since 
    $\eta\big( W(g) W(h) \big) 
     = \eta(W(g))\,\eta(W(h))=1$,
    we must have $W(g)W(h)=W(gh)$.
\end{proof}

We call $W=W_{\omega,\lambda}$ 
   \defemph{\emph{the} (canonical) Weil representation}   
   associated to $\omega$, $\lambda$.
   (Notice that $V$ and $R$ are implicit in 
   $\omega$ and $\lambda$.)
   Since 
   $\cA \iso \enmo_{\crp{K}}(M) \iso \mat_{\sqrt{\abs{V} }}(\crp{K})$,
   we can view $W\colon \Sp(V) \to \cA^*\iso \GL_{\crp{K}}(M)$
   as a representation in the usual sense.      
   The character $\psi=\psi_{\omega,\lambda}$ of $W$ is called 
   \defemph{the (canonical) Weil character.}

The purpose of this paper is to find a formula for
$\psi_{\omega, \lambda}(g)$.
The point of the next remark is that it is no loss
of generality to assume $R=\ints/m\ints$.
(The situation would be different if we were to study
the decomposition of $\psi$ into
irreducible characters, 
as in \cite{climcnsze00,DuttaPrasad15,prasad09p}.)

   \begin{remark}
   \label{rm:indepR}
       Since $\lambda(R)\subseteq \compl^*$
       is a finite cyclic subgroup of order $m$ (say),
       we can write 
       $\lambda = \lambda'\circ\kappa$,
       where
       $\kappa\colon R\to R':=\ints/m\ints$
       is a surjective group homomorphism
       and
       $\lambda'$ 
       a faithful linear character of $(R',+)$.
       We get the following commutative diagram:
       \[
           \begin{tikzcd}[row sep=tiny]
              & R \arrow[dd,"\kappa"] 
                  \arrow[dr,"\lambda"]
              & 
              \\
              V \times V \arrow[ur, "\omega"]
                         \arrow[dr, "\omega'"']
              & & \compl^* 
              \\
              & R' \arrow[ur,"\lambda'"'] &
           \end{tikzcd}
       \]
       In general,
       $\kappa$ is not a ring homomorphism,
       because $\Ker\lambda$ contains no non-zero ideal of $R$,
       but the composed form $\omega'=\kappa \circ\omega$
       is bi-additive and thus $R'$-bilinear, as $R'=\ints/m\ints$.
       Thus $(V,R',\omega',\lambda')$ satisfies \cref{basicsetup}.
       The symplectic algebra depends only on the 
       bilinear form (bicharacter)
       $\lambda\circ\omega \colon V\times V\to \compl^*$.
       Clearly,
       \[ \Sp_R(V,\omega)
          \subseteq \Sp_{R'}(V,\omega')
          = \Sp_{\ints}(V,\lambda\circ\omega).
       \]
       For $g\in \Sp_R(V,\omega)$,
       we have $W_{\omega',\lambda'}(g) = W_{\omega,\lambda}(g)$.
       This follows easily 
       by first observing that the analogous result 
       for the $P$ defined in \cref{p:weilconcret} holds.
   \end{remark}

    
\section{The Weil  character: simple properties}
\label{sec:weilcharI}

In this and the next section,
we assume \cref{basicsetup}, 
and we let $W\colon \Sp(V)\to \cA$ 
be \emph{the} canonical Weil representation associated 
to the data $(V,R,\omega,\lambda)$,
and $\psi = \tr W$ its character.

In the next result, $E_{\pm}$ are as defined before 
\cref{t:weilexist}.
We also write simply $-1$ for the central involution
in $\Sp(V)$ sending $v$ to $-v$.

\begin{prop}
    \label{p:psitaupm}
    Let $\psi_{+}$ and $\psi_{-}$ be the characters of $W$
    on $E_{+}$ and $E_{-}$, respectively.
    Then
    \begin{align*}
        \psi(-1) 
        &=(-1)^{\frac{\sqrt{\abs{V}}-1}{2}},
        \\
        \psi_{+}(g) 
        &= \frac{ \psi(g) + \psi(-1)\psi(-g)
        }{2}
        \quad  \text{and} 
        &  
        \psi_{-}(g) 
        &= \frac{ \psi(g) - \psi(-1)\psi(-g)
        }{2}.
    \end{align*}
\end{prop}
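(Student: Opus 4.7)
The plan is to reduce everything to a single explicit computation of $W(-1)$, after which the three formulas fall out essentially by bookkeeping on the eigenspace decomposition $M = E_+ \oplus E_-$.

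The key observation is that $P(-1)$ is a scalar multiple of $T$. Indeed, applying \eqref{equ:WardsP} to $g = -\id_V$, one finds $V(1-(-\id_V)) = 2V = V$ (since $2$ is invertible), and the form $B_{-1}$ from \cref{l:sigmaform} reduces to $B_{-1}(x,y) = \omega(x,y)/2$, so that $B_{-1}(x,x) = 0$ by alternation of $\omega$. Thus the exponent in \eqref{equ:WardsP} vanishes, and $P(-1) = \sum_{x \in V} b_x = \sqrt{\abs{V}}\, T$.

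I would then use this to pin down the scalar relating $W(-1)$ and $T$. Writing $d_\pm = \dim_{\crp{K}} E_\pm$, the element $P(-1) = \sqrt{\abs{V}}\, T$ acts as $\pm\sqrt{\abs{V}}$ on $E_\pm$, so
\[ \eta(P(-1)) = \frac{\det P(-1)|_{E_+}}{\det P(-1)|_{E_-}} = \frac{\sqrt{\abs{V}}^{d_+}}{(-1)^{d_-}\sqrt{\abs{V}}^{d_-}} = (-1)^{d_-}\sqrt{\abs{V}}, \]
using $d_+ - d_- = \tr T = 1$. Consequently $W(-1) = \eta(P(-1))^{-1} P(-1) = (-1)^{d_-}\, T$, and taking traces gives $\psi(-1) = (-1)^{d_-}\tr T = (-1)^{d_-}$. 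Combined with $d_+ + d_- = \sqrt{\abs{V}}$ and $d_+ - d_- = 1$, this yields $d_- = (\sqrt{\abs{V}}-1)/2$ and hence the first formula.

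The remaining two formulas reduce to a $2 \times 2$ linear system. Because $W(g)$ is a scalar multiple of $P(g)$, it commutes with $T$ by \cref{l:ccinv}, so the subspaces $E_\pm$ are $W(g)$-stable and $T$ acts as $\pm 1$ on them. Computing the trace of $W(-g) = W(-1)W(g) = (-1)^{d_-}\, T\, W(g)$ componentwise on $E_+$ and $E_-$ gives
\[ \psi(-g) = (-1)^{d_-}\bigl(\psi_+(g) - \psi_-(g)\bigr) = \psi(-1)\bigl(\psi_+(g) - \psi_-(g)\bigr). \]
Multiplying by $\psi(-1)$ and using $\psi(-1)^2 = 1$ gives $\psi(-1)\psi(-g) = \psi_+(g) - \psi_-(g)$, which together with $\psi(g) = \psi_+(g) + \psi_-(g)$ solves to exactly the claimed formulas for $\psi_\pm(g)$.

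There is no genuine obstacle here: the only subtle point is the sign bookkeeping in the scalar $\eta(P(-1))$ on the two eigenspaces, which becomes routine once one recognizes that $P(-1) = \sqrt{\abs{V}}\, T$.
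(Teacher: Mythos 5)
Your proof is correct and follows essentially the same route as the paper: both identify $W(-1)=(-1)^{d_-}T$ via the normalization $\eta$ from the proof of \cref{t:weilexist}, deduce $\psi(-1)=(-1)^{d_-}$ with $d_-=(\sqrt{\abs{V}}-1)/2$, and then extract $\psi_{\pm}(g)$ from the eigenspace decomposition (the paper phrases the last step via the projections $e_{\pm}=(1\pm T)/2$, which is the same $2\times 2$ system you solve). Your explicit verification that $P(-1)=\sqrt{\abs{V}}\,T$ and the computation of $\eta(P(-1))$ are exactly the details the paper leaves implicit.
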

\begin{proof}
    We use the notation from the proof
    of \cref{t:weilexist}:
    It follows from that proof that
    $W(-1) = (\det T_{|E_{+}}) (\det T_{|E_{-}})^{-1} \cdot T
     = (-1)^{d_{-}} T$.
    Thus $\psi(-1) = (-1)^{d_{-}} \tr T = (-1)^{d_{-}}$.
        From $d_{+}+d_{-}=\dim M = \sqrt{\abs{V}}$ and
        $d_{+}-d_{-} = \tr T = 1$ it follows that
        $d_{-}= (\sqrt{\abs{V}}-1)/2$.
    
    The projection of $M$ to $E_{\pm}$ is given by
    $e_{\pm} = (1 \pm T)/2 = (1 \pm \psi(-1)W(-1))$.
    From this the other formulas follow.
\end{proof}

By using Ward's concrete formula for a projective
representation equivalent with $W$,
we get a convolution formula for $\psi$:

\begin{prop}\label{p:mult}
    Let 
    $g$, $h\in \Sp(V)$. Then
    \begin{align*}
    \psi(gh)
    &= \frac{ \psi(g)\psi(h) }{ \sqrt{\abs{V}} }
    \sum_{x\in V(1-g)\cap V(1-h)}
    \lambda\left( \tfrac{1}{2}
    ( B_g(x,x) +  B_h(x,x) 
    ) 
    \right).
    \end{align*}
\end{prop}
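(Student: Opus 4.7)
The plan is to exploit the relationship between the representation $W$ and the explicit (but only projective) operators $P(g)$ from \cref{p:weilconcret}. Recall from the proof of \cref{t:weilexist} that $W(g) = \eta(P(g))^{-1}P(g)$ for a certain scalar $\eta(P(g)) \in \crp{K}^*$, so $W$ and $P$ differ only by a scalar correction, and moreover $W$ is a genuine homomorphism: $W(gh) = W(g)W(h)$. The strategy is to take traces everywhere and compare.

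\textbf{Step 1: Compute $\tr P(g)$.} Expanding the formula in \eqref{equ:WardsP} and using \cref{l:tracesx} ($\tr b_x = n\delta_{x,0}$ with $n = \sqrt{\abs{V}}$), only the summand $x = 0$ contributes. Since $0 \in V(1-g)$ with coefficient $\lambda(0)=1$, this yields $\tr P(g) = \sqrt{\abs{V}}$, independent of $g$.

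\textbf{Step 2: Express the correction scalar via $\psi$.} Taking traces of $W(g) = \eta(P(g))^{-1}P(g)$ gives $\psi(g) = \eta(P(g))^{-1}\sqrt{\abs{V}}$, and hence
\[ \eta(P(g))^{-1} = \frac{\psi(g)}{\sqrt{\abs{V}}}. \]

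\textbf{Step 3: Combine multiplicativity of $W$ with the cocycle formula of \cref{p:weilconcret}.} We have $P(g)P(h) = c(g,h)P(gh)$, where
\[ c(g,h) = \sum_{x\in V(1-g)\cap V(1-h)} \lambda\bigl(\tfrac{1}{2}(B_g(x,x)+B_h(x,x))\bigr). \]
Multiplying out $W(g)W(h) = W(gh)$ in terms of the $P$'s gives $\eta(P(g))^{-1}\eta(P(h))^{-1}c(g,h) = \eta(P(gh))^{-1}$. Substituting Step~2 on both sides and rearranging yields
\[ \psi(gh) = \frac{\psi(g)\psi(h)}{\sqrt{\abs{V}}}\, c(g,h), \]
which is the claimed convolution formula.

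There is no genuine obstacle here; the only care needed is bookkeeping the scalar $\eta(P(g))$ and observing that $\tr P(g)$ is constant in $g$ (this constancy is what makes the identification in Step~2 possible and makes the proof essentially a one-line computation once the ingredients are assembled).
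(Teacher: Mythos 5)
Your proposal is correct and follows essentially the same route as the paper: write $W(g)$ as a scalar multiple of $P(g)$, identify that scalar as $\psi(g)/\sqrt{\abs{V}}$ by taking traces (the paper leaves the computation $\tr P(g)=\sqrt{\abs{V}}$ implicit, which you spell out via \cref{l:tracesx}), and then compare $W(gh)=W(g)W(h)$ with $P(g)P(h)=c(g,h)P(gh)$. No issues.
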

\begin{proof}
    Let $P(g)$ be as defined in \cref{p:weilconcret}.
    Then $W(g)=\mu(g)P(g)$ for some 
    $\mu(g)\in \crp{K}$ 
    (in fact, we computed $\mu(g)$ in the proof of \cref{t:weilexist}),
    and by taking traces, we see that we must have
    $\psi(g) = \mu(g) \sqrt{\abs{V}} $.
    From
    $W(gh)=W(g)W(h)$ and
    $P(g)P(h)=c(g,h)P(gh)$ it follows
    that
    \[ \frac{ \psi(gh) }{ \sqrt{\abs{V}} }
        =
        \frac{ \psi(g) }{ \sqrt{\abs{V}} }
        \frac{ \psi(h) }{ \sqrt{\abs{V}} } c(g,h).
    \]
    From the value of $c(g,h)$ in 
    \cref{p:weilconcret},
    the result follows.
\end{proof}

For the sake of completeness, we give a simple
proof of the following result, although it is
well known~\cite{howe73, i73, prasad09p}.
\begin{prop}\label{p:absvalue}
    We have
    \[ \abs{ \psi(g) }^2
    = \abs{\Ker(g-1)}=\abs{\C_V(g)}
     \quad \text{for all $g\in \Sp(V)$.}
    \]
\end{prop}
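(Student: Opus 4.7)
The plan is to apply the convolution formula from \cref{p:mult} to the product $g\cdot g^{-1}=1$, and read off $\abs{\psi(g)}^2$ directly. Since $\Sp(V)$ is finite, the representation $W$ can be realized unitarily, and therefore $\psi(g^{-1})=\overline{\psi(g)}$ for every $g\in\Sp(V)$; hence
\[
\abs{\psi(g)}^2 = \psi(g)\,\psi(g^{-1}).
\]
We also note the easy identity $\abs{\Ker(1-g)}=\abs{\C_V(g)}$, since $v\in \Ker(1-g)$ iff $vg=v$.

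Next I would substitute $h=g^{-1}$ into the formula of \cref{p:mult}. The left hand side becomes $\psi(1)$, which equals $\sqrt{\abs{V}}$ because $W(1)$ acts as the identity on the simple $\cA$-module $M$ of dimension $\sqrt{\abs{V}}$ (see the discussion preceding \cref{t:weilexist} together with \cref{p:sympl_alg_matring}). On the right hand side one needs to evaluate the sum
\[
\sum_{x\in V(1-g)\cap V(1-g^{-1})}
\lambda\!\left(\tfrac{1}{2}\bigl(B_g(x,x)+B_{g^{-1}}(x,x)\bigr)\right).
\]
Here I would invoke the two observations already made at the end of the proof of \cref{p:weilconcret}: namely $V(1-g)=V(1-g^{-1})$ (via $v(1-g)=-vg(1-g^{-1})$), and $B_{g^{-1}}(x,x)=-B_g(x,x)$ for $x\in V(1-g)$. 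Consequently the summand is identically $\lambda(0)=1$, and the sum collapses to $\abs{V(1-g)}$.

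Putting the pieces together,
\[
\sqrt{\abs{V}} \;=\; \frac{\psi(g)\,\psi(g^{-1})}{\sqrt{\abs{V}}}\;\abs{V(1-g)},
\]
so that
\[
\psi(g)\psi(g^{-1}) \;=\; \frac{\abs{V}}{\abs{V(1-g)}} \;=\; \abs{\Ker(1-g)},
\]
where the last equality is the first isomorphism theorem applied to the $R$-linear map $v\mapsto v(1-g)$ from $V$ onto $V(1-g)$. Combined with the initial reduction, this yields $\abs{\psi(g)}^2 = \abs{\Ker(1-g)} = \abs{\C_V(g)}$, as desired. There is no real obstacle here; the only point requiring care is the simplification of the sum, and this has essentially been recorded already in the proof of \cref{p:weilconcret}.
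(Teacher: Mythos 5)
Your proof is correct, but it follows a different route from the paper's. The paper computes the trace of the conjugation map $a\mapsto a^{W(g)}$ on $\cA$ in two ways: with respect to a basis of matrix units it equals $\tr W(g)^{-1}\tr W(g)=\abs{\psi(g)}^2$, while with respect to the basis $\menge{b_v}{v\in V}$, which is permuted via $b_v\mapsto b_{vg}$, it equals the number of fixed points $\abs{\C_V(g)}$. You instead specialize the convolution formula of \cref{p:mult} to $h=g^{-1}$, reusing the identities $V(1-g)=V(1-g^{-1})$ and $B_{g^{-1}}(x,x)=-B_g(x,x)$ (equivalently, $c(g,g^{-1})=\abs{V(1-g)}$) already recorded in the proof of \cref{p:weilconcret}, then convert $\abs{V}/\abs{V(1-g)}$ into $\abs{\Ker(1-g)}$ by the first isomorphism theorem. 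Every step checks out, and there is no circularity since \cref{p:mult} does not depend on the statement being proved. Note that both arguments quietly rest on the same auxiliary fact $\psi(g^{-1})=\cconj{\psi(g)}$ (you justify it by unitarizability; the paper uses it implicitly when writing $\tr W(g)^{-1}\tr W(g)=\abs{\psi(g)}^2$). The paper's version is slightly more self-contained in that it needs nothing beyond the conjugation property $b_v^{W(g)}=b_{vg}$, whereas yours leans on the explicit cocycle computation; in exchange, your derivation makes transparent exactly where the quantity $\abs{V}/\abs{V(1-g)}$ comes from.
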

\begin{proof}
    The linear map $\cA \to \cA$
    sending $a$ to $a^{W(g)}$
    has trace
    \[ \tr W(g)^{-1} \tr W(g) = \abs{ \psi(g) }^2,
    \] 
    as is seen when using
    a set of matrix units as basis of $\cA$.
    On the other hand, 
    $\menge{ b_v }{ v\in V }$
    is a basis of $\cA$, and 
    $b_{v}^{W(g)} = b_{vg}$.
    The result follows.
\end{proof}

\begin{prop}\label{p:rational}
\hfill
  \begin{enumerate}
  \item \label{it:detWeil} 
        The order of the character
        $g\mapsto \det W(g)$        
        divides the order of $\lambda$.
  \item \label{it:rational}
       Suppose that the order of $g\in \Sp(V)$ 
       is prime to $\abs{V}$.
        Then $\psi(g)$ is rational.  
  \end{enumerate}  
\end{prop}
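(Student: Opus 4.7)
The plan is to use cyclotomic considerations throughout, exploiting that the Weil representation $W$ is realized over the cyclotomic field $\crp{K}=\rats(\lambda(R))=\rats(\zeta_m)$, where $m$ is the order of $\lambda$.

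For~\ref{it:detWeil}, the central identity I would establish is $\det W=\chi^2$, where $\chi(g):=\det W(g)_{|E_+}$. Indeed, $W(g)$ commutes with~$T$ (by \cref{l:ccinv}), so $W(g)$ preserves the decomposition $M=E_+\oplus E_-$; the defining property \cref{t:weilexist}\ref{it:weildetpm} then gives $\det W(g)_{|E_+}=\det W(g)_{|E_-}=\chi(g)$. Since $2\in R^*$, $m$ is odd, and so the roots of unity in $\rats(\zeta_m)$ form the cyclic group $\mu_{2m}$. As $\chi\colon\Sp(V)\to\crp{K}^*$ takes values in this group, $\chi^{2m}=1$, whence $(\det W)^m=\chi^{2m}=1$, as required.

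For~\ref{it:rational}, $\psi(g)=\tr W(g)$ lies in $\crp{K}$ and also in $\rats(\zeta_{\ord g})$, since the eigenvalues of $W(g)$ are $\ord g$-th roots of unity ($W(g)^{\ord g}=1$). The cyclotomic intersection is $\rats(\zeta_{\gcd(m,\ord g)})$, which is~$\rats$ when $\gcd(m,\ord g)=1$ (recall $m$ is odd). The hypothesis $\gcd(\ord g,\abs{V})=1$ does not directly yield $\gcd(m,\ord g)=1$, so I would sharpen the first membership: by \cref{p:weilconcret}, the coefficients of $P(g)$ are $\lambda$-values on the ideal $J:=R\,\omega(V,V)$ (since $B_g(x,x)\in\omega(V,V)$), and by primitivity of~$\lambda$ the character $\lambda|_J$ has order $e:=\abs{J}$. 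Hence $\psi(g)\in\rats(\zeta_e)$. A routine check using $\exp(V)\cdot\omega(V,V)=0$ gives $e\mid\exp(V)\mid\abs{V}$, so $\gcd(\ord g,\abs{V})=1$ now forces $\gcd(\ord g,e)=1$, and the intersection argument concludes.

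The main obstacle is the sharpening step in~\ref{it:rational}: without replacing~$m$ by~$e$, the hypothesis on~$\abs{V}$ is not quite enough. Part~\ref{it:detWeil}, by contrast, is immediate once one notices that $\det W$ is naturally a square of a character.
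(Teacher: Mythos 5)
Your proposal is correct and follows the paper's own proof almost verbatim: part~\ref{it:detWeil} is the identical observation that $\det W=(\det W_{|E_+})^2$ with $\det W_{|E_+}$ a root of unity in $\rats(\zeta_m)$, hence of order dividing $2m$, and part~\ref{it:rational} is the same cyclotomic-intersection argument. Your one deviation, the \enquote{sharpening} in~\ref{it:rational}, is a genuine if minor improvement: the paper takes $\crp{K}=\rats(\lambda)=\rats(\zeta_{\ord\lambda})$ with $\ord\lambda=\operatorname{char}R$, and the step \enquote{$\gcd(\ord g,\abs{V})=1\implies\crp{K}\cap\crp{L}=\rats$} tacitly uses that every prime dividing $\operatorname{char}R$ divides $\abs{V}$ --- automatic when $V$ is a faithful $R$-module (as one may assume after \cref{rm:indepR}), but not literally forced by \cref{basicsetup}; replacing $\rats(\lambda)$ by $\rats(\lambda\circ\omega)$ as you do removes the issue. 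Two small points to tidy up. First, the order of the character $\lambda_{|J}$ is $\exp\bigl(J/(J\cap\Ker\lambda)\bigr)$, which divides $\exp(J)$ but certainly not $\abs{J}$ in general, so your claim $e=\abs{J}$ is wrong for non-cyclic $J$; this is harmless, since only the prime divisors of $e$ enter the argument and these do divide $\exp(V)\mid\abs{V}$. Second, knowing that the coefficients of $P(g)$ lie in $\rats(\lambda(J))$ is not by itself enough, because $\psi(g)=\eta(P(g))^{-1}\tr P(g)$ with $\eta$ the normalization from the proof of \cref{t:weilexist}; you also need that the matrix units, $T$, $E_{\pm}$ and hence $\eta(P(g))$ are all defined over $\rats(\lambda\circ\omega)$, which one reads off from the proof of \cref{p:sympl_alg_matring} --- this is the analogue, over the smaller field, of the paper's blanket remark that the whole construction lives over $\crp{K}$.
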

\begin{proof}
  Our results so far are valid 
  over the field
  $\crp{K} :=\rats(\lambda)$ generated by the values of
  $\lambda$.
  It follows that $\tr W(g)$, 
  $\det W(g)$ and
  $\det W(g)_{|E_{\pm}} \in \crp{K}$
  (where $E_{\pm} =\Ker(I\mp T)$ as defined before
   \cref{t:weilexist}).
  This already shows that 
  the order of $\det W(g)_{|E_{+}}$ divides $2\ord(\lambda)$.
  As 
  $\det W(g)= (\det W(g)_{|E_{+}})^2$, 
  \ref{it:detWeil} follows.  

  Now suppose that $\gcd(\ord(g),\abs{V})=1$.
  The eigenvalues of $W(g)$ 
  lie in a field $\crp{L}$ obtained by
  adjoining a primitive 
  $\ord(g)$\nobreakdash th 
  root of unity to $\rats$,
  and thus $\tr W(g)\in \crp{L}$.
  Since $\gcd(\ord(g),\abs{V})=1$, 
  we have
  $\crp{K} \cap \crp{L} = \rats$~\cite[Corollary on p.~204]{langAlg}.
  Thus $\psi(g)=\tr W(g)\in \rats$ as claimed.
\end{proof}

Another proof of \ref{it:detWeil} is by showing that
$\Sp(V)$ is generated by elements of order
dividing the order of $\lambda$
(which coincides with the characteristic of $R$,
since $\lambda$ is primitive).
In fact, it seems that $\Sp(V)$
is perfect with only certain exceptions,
so that $\Sp(V)$
either has no nontrivial linear characters
or only ones of order $3$.
(For example, for $V= (\ints/9\ints)^2$
 or $V= (\ints/9\ints \times \ints/3\ints)^2$,
 the symplectic group $\Sp(V)$ is not perfect.) 
W.~Klingenberg \cite[Corollary to Theorem~3]{Klingenberg63}
    has shown that $\Sp_R(V)$ is a perfect group
    when $V\iso R^{2n}$ and $R$ is a local ring with 
    $\abs{R/\JacR(R)}>3$. 
    His proof can be extended to the case 
    where $\abs{R/\JacR(R)}=3$ and $n>1$.

The next result,
together with the facts that
$\psi(g)^2=\abs{\C_V(g)}$ and $\psi(g)$ is rational,
allows to compute $\psi(g)$ inductively
for elements of $2$-power-order.
We need only the case $g^2 =1$, which can also be
proved directly.

\begin{prop}\label{p:2powervalues}
    Let $g\in \Sp(V)$ have order $2^k$.
    Then 
    \[\psi(g)\equiv 
      (\psi_{\erz{g^2}}, 1_{\erz{g^2}})_{\erz{g^2}}
    \mod 4.
    \]
\end{prop}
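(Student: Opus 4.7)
My plan is to compare $\psi(g)$ and $m_0 := (\psi_{\erz{g^2}}, 1_{\erz{g^2}})_{\erz{g^2}}$ through the eigenvalue multiplicities of $W(g)$, then reduce the mod~$4$ congruence to a determinant computation on the subspace $N := \Ker(W(g^2)-I)$. Since $2\in R^{\times}$ implies $\abs{V}$ is odd, the order of $g^a$ is coprime to $\abs{V}$ for every $a$, so \cref{p:rational} gives that $\psi$ is rational-valued on $\erz{g}$. Fix a primitive $2^k$-th root of unity $\omega$; the relation $W(g)^{2^k}=I$ yields $\psi(g)=\sum_{j=0}^{2^k-1} n_j\,\omega^j$, where $n_j$ is the multiplicity of $\omega^j$ as an eigenvalue of $W(g)$. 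Standard character theory of cyclic groups then forces $n_j$ to be constant on the orbits of $(\ints/2^k\ints)^{\times}$ acting on $\ints/2^k\ints$ by multiplication; in particular $n_j$ depends only on $v_2(j)$, and $n_j = n_{-j}$.

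Next I would rewrite both sides in terms of the two distinguished multiplicities $p := n_0$ and $q := n_{2^{k-1}}$. Partitioning by $v_2(j)$: for $0\leq s\leq k-2$ the orbit $\{j:v_2(j)=s\}$ consists of all primitive $2^{k-s}$-th roots of unity (sum $0$); for $s=k-1$ only $j=2^{k-1}$ contributes $\omega^{2^{k-1}}=-1$; and for $s=k$ only $j=0$ contributes $1$. Hence $\psi(g)=p-q$. On the other hand, the $1$-eigenspace of $W(g^2)=W(g)^2$ collects exactly the $\pm 1$ eigenspaces of $W(g)$, so $N$ has dimension $m_0=p+q$. Therefore $m_0-\psi(g) = 2q$, and the proposition reduces to showing that $q$ is even — this is the step I expect to be the main obstacle.

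To prove $q$ is even, I plan to show $\det(W(g)|_N)=1$; since $W(g)|_N$ is an involution whose $(-1)$-eigenspace has dimension $q$, this gives $\det(W(g)|_N) = (-1)^q = 1$. Writing $M = N \oplus N'$ with $N'$ the sum of the remaining eigenspaces of $W(g^2)$, both summands are $W(g)$-stable. On $N'$, the eigenvalues of $W(g)$ are the $\omega^j$ with $j\notin\{0,2^{k-1}\}$; the relation $n_j=n_{-j}$ pairs each such $\omega^j$ with its distinct conjugate $\omega^{-j}$ of equal multiplicity, and each conjugate pair contributes a factor $\omega^j\omega^{-j}=1$, so $\det(W(g)|_{N'})=1$. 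Finally, $\det W(g)$ has order dividing both $2^k$ (from $W(g)^{2^k}=I$) and, by \cref{p:rational}, the order of $\lambda$, which divides $\abs{R}$ and is odd (again as $2\in R^{\times}$). Thus $\det W(g)=1$, yielding $\det(W(g)|_N)=1/\det(W(g)|_{N'})=1$, so $q$ is even and $m_0-\psi(g)\equiv 0\pmod 4$.
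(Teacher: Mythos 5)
Your proof is correct and takes essentially the same route as the paper: both arguments reduce $\psi(g)$ to $m_1-m_{-1}$ (your $p-q$) via Galois-invariance of the eigenvalue multiplicities of $W(g)$, identify the inner product with $m_1+m_{-1}$ as the dimension of $\Ker(W(g^2)-I)$, and conclude that $m_{-1}$ is even from $\det W(g)=1$, which follows from \cref{p:rational}\ref{it:detWeil} since $\det W(g)$ is a $2^k$-th root of unity of odd order.
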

(Here, 
$(\alpha,\beta)_H 
   = (1/\abs{H}) \sum_{h\in H} 
        \alpha(h) \cconj{\beta(h)}
$ 
denotes the usual inner product for class function 
on a finite group~$H$, and $1_H$ the trivial character of $H$.)
\begin{proof}[Proof of \cref{p:2powervalues}]
    The eigenvalues of $W(g)$ are
    $2^k$-th roots of unity.
    Since $W(g)$ as matrix can be realized over $\rats(\lambda)$,
    the roots of unity $\eps$ and $\eps^{-1} = \cconj{\eps}$ occur
    with the same multiplicity as eigenvalue in $W(g)$, 
    and except for $\eps = \pm 1$, also 
    $\eps$ and $-\eps$ occur with the same multiplicity.
    It follows that
    \[ \psi(g)=\tr W(g) = m_1 + m_{-1}
       \quad \text{and}\quad
       \det W(g) = (-1)^{m_{-1}},
    \]
    where $m_{\pm 1}$ are the multiplicities of the eigenvalues
    $\pm 1$.
    By \cref{p:rational}~\ref{it:detWeil},
    we have $\det W(g) = 1$, so 
    $m_{-1}$ must be even.
    Thus
    \[ \psi(g) =  m_1-m_{-1} \equiv m_1 + m_{-1} \mod 4.
    \]
    But $m_1 + m_{-1}$ is the multiplicity of $1$
    as eigenvalue of $W(g^2)$.
    This is the multiplicity of the trivial character of
    $\erz{g^2}$ as a constituent of the restricted
    character $\psi_{\erz{g^2}}$.
    By the orthogonality relations of character theory,
    the result follows.
\end{proof}

\begin{cor}
\label{c:value_inv}
  Let $t\in \Sp(V)$ be an involution
    and set $c = \sqrt{ \abs{ \C_V(t) } }$,
    $d = \sqrt{ \abs{V(1-t)} } $. 
    Then $c$ and $d$ are positive integers
    and
    \[ 
      \psi (t)
         = (-1)^{\frac{d-1}{2}} c .
    \]
\end{cor}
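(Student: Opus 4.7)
The plan is to reduce the statement to showing $\psi(t)\in\{+c,-c\}$ and then fix the sign via the mod~$4$ congruence in \cref{p:2powervalues}.

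First, I would show that $c$ and $d$ are positive integers. Since $t$ has order $2$ and $2$ is a unit in $R$, the identity $v=\tfrac{1}{2}v(1+t)+\tfrac{1}{2}v(1-t)$ together with $(1-t)(1+t)=0$ yields the $R$-module decomposition $V=\C_V(t)\oplus V(1-t)$ and the equality $V(1-t)=\Ker(1+t)$. By \cref{l:ker_senkr} with $r=s=1$, we also have $\C_V(t)=V(1-t)^\perp$, so orthogonality between the two summands is automatic, and a short check shows that $\omega$ restricts to a non-degenerate alternating form on each summand. The Lagrangian argument in \cref{p:sympl_alg_matring} then gives that any finite symplectic $R$-module has square order; applied to $\C_V(t)$ and $V(1-t)$ this yields $c,d\in\ints_{>0}$ and $cd=\sqrt{\abs{V}}$.

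Next, since $2\in R^\times$ makes $V$ an abelian group with no $2$-torsion, $\abs{V}$ is odd, so $\gcd(\ord(t),\abs{V})=1$. Thus \cref{p:rational}\ref{it:rational} gives $\psi(t)\in\rats$. Combined with $\abs{\psi(t)}^2=\abs{\C_V(t)}=c^2$ from \cref{p:absvalue}, this forces $\psi(t)=\pm c$.

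To fix the sign, I would apply \cref{p:2powervalues} to $g=t$ (of $2$-power order $2$). As $g^2=1$, the subgroup $\erz{g^2}$ is trivial and the inner product there collapses to $\psi(1)=\sqrt{\abs{V}}=cd$, so the congruence becomes
\[ \psi(t)\equiv cd \pmod 4. \]
Because $c$ is odd, the relation $\pm c\equiv cd\pmod 4$ uniquely determines the sign: it forces $\psi(t)=c$ precisely when $d\equiv 1\pmod 4$ and $\psi(t)=-c$ precisely when $d\equiv 3\pmod 4$, which is exactly $(-1)^{(d-1)/2}c$. There is no serious obstacle here—every ingredient is already in the excerpt; the only verification requiring any care is the orthogonal decomposition $V=\C_V(t)\oplus V(1-t)$, which is immediate from $2\in R^\times$.
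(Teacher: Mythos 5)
Your proposal is correct and follows exactly the paper's own route: rationality from \cref{p:rational}, absolute value from \cref{p:absvalue}, and the sign from the mod~$4$ congruence of \cref{p:2powervalues} with $k=1$. The only difference is that you spell out the verifications (the decomposition $V=\C_V(t)\oplus V(1-t)$, the squareness of the orders, and the final sign bookkeeping) that the paper leaves implicit.
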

\begin{proof}
    By  \cref{p:absvalue} and \cref{p:rational}, 
    $\psi(t) = \pm c \in \rats$.
    By \cref{p:2powervalues} with $k=1$, it follows that
    $\psi(t) \equiv \psi(1)=cd \mod 4$, 
    and thus the result.
\end{proof}

\begin{example}\label{ex:DFTelem}
    Let $g\in \Sp(V)$ be an element with 
    $g^2= -1$.
    (The Weil representation of such an element
    encodes the discrete Fourier transformation.)
    By \cref{p:2powervalues},
    we have 
    $\psi(g)\equiv (\psi(1)+\psi(-1))/2 \mod 4$.
    It follows that
    $\psi(g)=1$ if $\psi(1)\equiv 1,3 \mod 8$ and
    $\psi(g)=-1$ if $\psi(1)\equiv 5,7 \mod 8$, or,
    more succinctly,
    $\psi(g) = \Jacobi*{-2}{\psi(1)}$
    (Jacobi-Symbol).
\end{example}

\section{Values on elements of odd order}
\label{sec:oddorder}

We keep the notation introduced in the last section,
in particular in \cref{p:psitaupm}.
The ideas in the next proof are taken from
Isaacs's paper~\cite[Theorem~5.3]{i73}.

\begin{prop}\label{p:oddupsi}
  For any $g\in \Sp(V)$ of odd order,
  \[
  \psi(-g)= \psi(-1)
  \quad\text{and}\quad 
  \psi_{+}(g)-\psi_{-}(g)=1.
  \]
\end{prop}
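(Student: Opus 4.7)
My plan is to first observe that the two asserted equalities are equivalent, then apply the convolution formula to reduce the statement to a Gauss-sum identity, and finally establish that identity via a character-theoretic analysis on $\erz{g}$, which is where I expect the main obstacle to lie.

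By \cref{p:psitaupm}, $\psi_+(g) - \psi_-(g) = \psi(-1)\psi(-g)$; since $\psi(-1)^2 = 1$, this makes the two assertions equivalent, so I focus on proving $\psi(-g) = \psi(-1)$. For $g \in \Sp(V)$ of odd order $n$, the polynomials $x+1$ and $x^n - 1$ in $R[x]$ are coprime (their gcd divides $(-1)^n - 1 = -2$, a unit), so Bezout produces $a, b \in R[x]$ with $a(x)(x+1) + b(x)(x^n - 1) = 1$; evaluating at $g$ shows $1+g$ is invertible on $V$, hence $\C_V(-g) = \Ker(1+g) = 0$. By \cref{p:absvalue}, $\abs{\psi(-g)}^2 = 1$, and the same reasoning gives $\abs{\psi(-g^k)}^2 = 1$ for every $k$.

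Next I apply \cref{p:mult} with $h = -1_V$. Using that $V(1-(-1_V)) = 2V = V$ (as $2$ is a unit) and that $B_{-1}(x,x) = \tfrac{1}{2}\omega(x, x) = 0$ (by the definition of $B_h$ and the fact that $\omega$ is alternating), the convolution formula reduces to
\[
  \psi(-g) = \frac{\psi(-1)\psi(g)}{\sqrt{\abs{V}}} \sum_{x \in V(1-g)} \lambda\bigl(\tfrac{1}{2} B_g(x, x)\bigr),
\]
so the claim $\psi(-g) = \psi(-1)$ becomes the identity $\psi(g)\, c(-1, g) = \sqrt{\abs{V}}$, where $c(-1, g)$ denotes the displayed sum.

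This last identity is the main obstacle. To handle it, I would study the virtual character $\chi := \psi_+ - \psi_-$ restricted to $\erz{g}$. The bound $\abs{\chi(g^k)}^2 = \abs{\psi(-g^k)}^2 = 1$ for all $k$ yields $(\chi|_{\erz{g}}, \chi|_{\erz{g}})_{\erz{g}} = 1$, and since $\chi(1) = d_+ - d_- = 1$, the restriction $\chi|_{\erz{g}}$ must equal a single irreducible character $\chi_j$ of $\erz{g}$. The core difficulty is to show $j = 0$. I plan to tackle this by induction on $n$: for each prime $p$ dividing $n$, the inductive hypothesis applied to $g^p$ (of smaller odd order $n/p$) gives $\chi|_{\erz{g^p}}$ trivial, forcing $j$ to be a multiple of $n/p$; intersecting these constraints handles the squarefree case directly. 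For the remaining prime-power cases, I would combine the rationality constraint $\chi(g) \in \rats(\lambda)$ with the structural argument from Isaacs's Theorem~5.3 (suitably adapted to our more general setting) to conclude.
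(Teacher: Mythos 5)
Your setup is sound and follows the same road as the paper up to the decisive point: you correctly reduce both assertions to each other via \cref{p:psitaupm}, correctly show $\abs{\psi(-g^k)}=1$ from $\Ker(1+g^k)=0$ and \cref{p:absvalue}, and correctly conclude that $\chi:=(\psi_+-\psi_-)|_{\erz{g}}$ is a single linear character $\chi_j$ of $\erz{g}$. (The detour through the convolution formula is harmless but unused.) The gap is that you never actually prove $j=0$. Your induction on $n=\ord(g)$ plus the rationality constraint covers only part of the problem: for $n=p^k$ the induction merely forces $p^{k-1}\mid j$, and the constraint $\chi(g)\in\rats(\lambda)$ is vacuous precisely when $p$ divides the order of $\lambda$ (the characteristic of $R$), which is the essential case --- e.g.\ transvection-like elements. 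Deferring this to ``the structural argument from Isaacs's Theorem~5.3, suitably adapted'' is not a proof; in this paper the analogue of Isaacs's characterization (the Remark at the end of \cref{sec:oddorder}) is itself \emph{deduced from} \cref{p:oddupsi}, so an unexamined appeal to it risks circularity.

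More fundamentally, any correct argument must invoke the normalization built into the definition of the canonical $W$: since $\Sp(V)$ need not be perfect, one could twist $W$ by a nontrivial linear character of $\Sp(V)$ of odd order, which preserves every property you use (the intertwining property, $\abs{\psi(-g)}=1$, the inner-product computation) but changes $\psi(-g)$ and falsifies the claim. The paper's resolution is exactly this missing step and is short: from $\psi_+|_U=\mu+\psi_-|_U$ with $\mu$ linear on an odd-order subgroup $U$, take determinants to get $\det(W|_{E_+})|_U=\mu\cdot\det(W|_{E_-})|_U$, and condition~(b) of \cref{t:weilexist}, namely $\det W(g)_{|E_+}=\det W(g)_{|E_-}$, forces $\mu=1_U$. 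Inserting that one observation in place of your final paragraph completes your argument (and makes the induction unnecessary).
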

\begin{proof} 
  Recall that 
  $\psi_{\pm}$ is the character of $W$ on $E_{\pm}$.
  It follows from the formulas in \cref{p:psitaupm}
  that $\psi_{+}(g)-\psi_{-}(g) = \psi(-1)\psi(-g)$
  for all $g\in   \Sp(V)$.
  Since $\psi(-1)=\pm 1$, the two claims 
  of the proposition follow from each other.
  
  Let $g \in \Sp(V)$ have odd order.
  Then $\C_{V}(-g) = \Ker(1+g) = 0$.
  It follows from \cref{p:absvalue} that
  $\abs{\psi_{+}(g)-\psi_{-}(g)}=\abs{\psi(-g)}=1$.
  Let $U\leq \Sp(V)$ be a subgroup of odd order
  and let $(\cdot,\cdot)_U$ denote the usual inner product
  for class functions on $U$.  
  Consider the virtual character 
  $\psi_{+}-\psi_{-}$.
  Then 
  \[
    (\psi_{+}-\psi_{-}, \psi_{+}-\psi_{-})_U 
      = \frac{1}{ \abs{U} } \sum_{g\in U} 
                        \abs{\psi_{+}(g)-\psi_{-}(g)}^2
      = 1
  \]
  and thus
  $\pm(\psi_{+}-\psi_{-})_U\in \Irr U$.
  Since $\psi_{+}(1)-\psi_{-}(1) = 1$,
  it follows that $\mu:=(\psi_{+}-\psi_{-})_U$
  is a linear character of $U$.
  Taking determinants yields
  $\det(\psi_{+})_U = \mu \det(\psi_{-})_U$
  and thus $\mu=1_U$ by the definition of 
  the canonical Weil character $\psi$.
  This shows
  \( 1 = (\psi_{+}-\psi_{-})(g) 
  \)
  for all $g$ of odd order,
  as claimed.
\end{proof}

The next result is the second part of
\cref{ti:valuesodd} from the introduction.

\begin{cor}\label{c:oddordervalues}
  Let $g\in \Sp(V)$ have odd order.
  Then
  \[ \psi(g) = \frac{1}{\sqrt{\abs{V}}}
               \sum_{v\in V} \lambda\left( \tfrac{1}{2}
                                \omega(v,vg) 
                                \right) .  
  \]
\end{cor}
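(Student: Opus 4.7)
The plan is to apply the convolution formula of \cref{p:mult} with the special element $h = -1_V$, and then to invoke \cref{p:oddupsi}.

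Since $2$ is invertible in $R$, one checks at once that $V(1 - (-1_V)) = 2V = V$, and that $B_{-1}(x,x) = \tfrac{1}{2}\omega(x,x) = 0$ for all $x\in V$, because $\omega$ is alternating. Thus \cref{p:mult} applied to $g$ and $-1_V$ reduces to
\[ \psi(-g) = \frac{\psi(g)\,\psi(-1)}{\sqrt{\abs{V}}}\cdot S, \qquad \text{where}\quad S := \sum_{x \in V(1-g)} \lambda\bigl(\tfrac{1}{2} B_g(x,x)\bigr). \]
By \cref{p:oddupsi} we have $\psi(-g) = \psi(-1)$, and $\psi(-1) = \pm 1$ by \cref{p:psitaupm}, so this yields the key identity $\psi(g)\, S = \sqrt{\abs{V}}$.

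Next I rewrite the sum appearing in the claim in terms of $B_g$. For $v\in V$, set $x := v(1-g)$; since $\omega$ is alternating,
\[ \omega(v,vg) = -\omega(v, v(1-g)) = -B_g(x,x). \]
The map $v\mapsto v(1-g)$ is an $\abs{\C_V(g)}$-to-one surjection $V \twoheadrightarrow V(1-g)$, so
\[ \sum_{v\in V} \lambda\bigl(\tfrac{1}{2}\omega(v,vg)\bigr) \;=\; \abs{\C_V(g)}\cdot \overline{S}. \]

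Finally I compute $S\overline{S}$ via \cref{p:absvalue}. Observing that $V(1-g^{-1}) = V(1-g)$ and $B_{g^{-1}}(x,x) = -B_g(x,x)$ (both noted in the proof of \cref{p:weilconcret}), the key identity applied to $g^{-1}$ (also of odd order) gives $\psi(g^{-1})\,\overline{S} = \sqrt{\abs{V}}$. Multiplying this with $\psi(g)\,S = \sqrt{\abs{V}}$ and using $\psi(g)\psi(g^{-1}) = \abs{\psi(g)}^2 = \abs{\C_V(g)}$ yields $S\overline{S} = \abs{V}/\abs{\C_V(g)} = \abs{V(1-g)}$. Combining:
\[ \psi(g) \;=\; \frac{\sqrt{\abs{V}}}{S} \;=\; \frac{\sqrt{\abs{V}}\,\overline{S}}{S\overline{S}} \;=\; \frac{\sqrt{\abs{V}}\,\overline{S}}{\abs{V(1-g)}} \;=\; \frac{\abs{\C_V(g)}\,\overline{S}}{\sqrt{\abs{V}}}, \]
which by the previous display equals $\tfrac{1}{\sqrt{\abs{V}}}\sum_{v\in V}\lambda(\tfrac{1}{2}\omega(v,vg))$. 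The only really clever move is the choice $h = -1_V$, which forces $B_{-1}$ to vanish on the diagonal and makes the convolution formula collapse; everything else is bookkeeping.
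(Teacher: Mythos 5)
Your proof is correct, and it rests on the same two pillars as the paper's — the convolution formula of \cref{p:mult} with the central involution $-1_V$ as one factor, and the identity $\psi(-g)=\psi(-1)$ from \cref{p:oddupsi} — but you use a different factorization, and that choice costs you an extra round of bookkeeping. You expand $\psi(-g)=\psi(g\cdot(-1))$, which produces the sum $S=\sum_{x\in V(1-g)}\lambda(\tfrac12 B_g(x,x))$ over $V(1-g)$; since the target formula is a sum over all of $V$, you then have to relate the two via the $\abs{\C_V(g)}$-to-one map $v\mapsto v(1-g)$, and to solve $\psi(g)S=\sqrt{\abs{V}}$ for $\psi(g)$ you need $S\overline{S}=\abs{V(1-g)}$, which you extract from \cref{p:absvalue} together with $V(1-g^{-1})=V(1-g)$ and $B_{g^{-1}}=-B_g$. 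All of these steps check out (in particular $\omega(v,vg)=-B_g(x,x)$ for $x=v(1-g)$, and $\lambda(-a)=\overline{\lambda(a)}$, are correct). The paper instead expands $\psi(g)=\psi((-1)\cdot(-g))$: since $g$ has odd order, $1+g$ is injective, hence $V(1-(-g))=V(1+g)=V$, so the convolution sum ranges over all of $V$ and, after the substitution $x=v(1+g)$, is \emph{already} the sum $\sum_{v}\lambda(\tfrac12\omega(v,vg))$, while the prefactor $\psi(-1)\psi(-g)$ equals $1$ by \cref{p:oddupsi}. So the paper's factorization lands on the answer in one step, with no need for \cref{p:absvalue} or the conjugation argument; your route buys nothing extra but is a perfectly valid alternative.
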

\begin{proof}
    By the convolution formula from
    \cref{p:mult} applied to $g = (-1)(-g)$,
    we have
    \[ \psi(g) 
       = \frac{\psi(-1)\psi(-g)}{\sqrt{\abs{V}}}
         \sum_{ x\in V(1-(-1))\cap V(1-(-g)) }
            \lambda \left( \tfrac{1}{2} 
                           (B_{-1}(x,x) + B_{-g}(x,x) )
                    \right).
    \]
    By \cref{p:oddupsi},
    $\psi(-1)\psi(-g) = 1$.
    By \cref{l:symformqg} (or direct computation), 
    $B_{-1}(x,x) = 0$
    for all $x\in V(1-(-1))=2V=V$.
    As $g$ has odd order,
    $\Ker(1+g)=\{0\}$ and thus
    $V(1-(-g)) =V(1+g)=V$.
    For $x= v(1+g)$, we have
    \begin{align*}
    B_{-g}(x,x)
    &= \omega ( v , v(1+g) ) = \omega ( v ,vg ).
    \end{align*} 
    Thus the result follows.
\end{proof}

We conclude this section with a digression
and show that our definition of the 
\emph{canonical} Weil representation~$W$
is equivalent to the one of Isaacs \cite[(5.2)]{i73}.
This also yields a more representation theoretic
characterization of $W$.

\begin{remark}
   Let $\pi$ be the set of primes dividing $\abs{V}$,
   and let $G$ be any group acting on $V$ by symplectic
   automorphisms
   (so there is a homomorphism $G\to \Sp(V)$ defined).
   Then the restriction of the canonical Weil representation
   to $G$ 
   is the unique group homomorphism
   $W\colon G\to \cA^*$
   such that the following hold:
   \begin{enumerate}
   \item \label{it:autprop}
       $W(g)^{-1}a W(g) = a^g$ for all 
       $a \in \cA$, $g\in G$.
   \item \label{it:detord} 
       The order of the character $g\mapsto \det W(g)$
       is a $\pi$-number.
   \item \label{it:oddconst}
       For any $\pi$-subgroup $U$,
       the trivial character $1_U$ is the unique constituent
       of the character $\psi_U$ of $W_{|U}$ occurring with odd 
       multiplicity.
   \end{enumerate} 
\end{remark}
\begin{proof}
   The canonical Weil representation 
   has all these properties:
   \ref{it:detord} follows from
   \cref{p:rational}~\ref{it:detWeil}. 
   By \cref{p:oddupsi},
   we have $(\psi_{+} - \psi_{-})_U = 1_U$
   for any odd order subgroup $U$,
   and thus 
   $\psi_U = (\psi_{+}-\psi_{-})_U + 2(\psi_{-})_U 
                 = 1_U + 2(\psi_{-})_U $.
   Thus \ref{it:oddconst} holds for subgroups of odd order,
   and this includes $\pi$-subgroups.

   It remains to show that these properties determine $W$.
   Suppose $\widetilde{W}$ is another such
   homomorphism.
   Since $\Z(\cA)= \crp{K}1_{\cA}$,
    it follows from
   Condition~\ref{it:autprop} 
   that $\widetilde{W}(g) = \mu(g) W(g)$, where
   $\mu\colon G\to \crp{K}^*$ is a linear character.
   As $\det\widetilde{W}(g) = \mu(g)^{\psi(1)} \det W(g)$
   and $\det W(g)$ are supposed to have order a $\pi$-number,
   and since $\psi(1)=\sqrt{\abs{V}}$ is also a $\pi$-number,
   it follows that $\ord(\mu)$ is a $\pi$-number.
   Let $U$ be a $\pi$-subgroup. 
   As $1_U$ occurs in $\psi_U$ with odd multiplicity,
   $\mu_U$ occurs in $(\mu\psi)_U$ with odd multiplicity.
   By Condition~\ref{it:oddconst} for $\widetilde{W}$ 
   we must have 
   $\mu_U=1_U$. 
   Now we have shown that $\ord(\mu)$ is a $\pi$-number,
   but the restriction of $\mu$ to any $\pi$-subgroup
   is trivial. Thus $\mu=1$ and $\widetilde{W}=W$ as claimed.
\end{proof}

\section{Bilinear forms over principal ideal rings}
\label{sec:bil_pir}

This is the first of several sections which prepare the proof
of the main result (\cref{ti:main}).

Throughout this section,
$R$ denotes a \emph{principal ideal ring} (PIR),
that is, a commutative ring with~$1$
in which every ideal is a principal ideal.
We do \emph{not} assume that $R$ is a domain:
for example, the results hold for $R=\ints/m\ints$.
The ideas in the next proposition are extracted 
from an unpublished note by
P.-Y.~Gaillard~\cite{Gaillard11}.

\begin{prop}\label{p:pirbilin_id}
    Let $R$ be a PIR 
    and
    $B\colon U\times W\to R$ a bilinear form,
    where $U$, $W$ are $R$-modules.
    Then 
    $B(U,W):= \menge{B(u,w)}{u\in U,\, w\in W}$ 
    is an ideal,
    and if $B(U,W)=RB(u,w)$, then 
    \[ U= Ru + \leftidx{^B}{w}
    \quad \text{and}
    \quad
    W = Rw + u^B.
    \] 
    When $B$ is non-degenerate, these sums are direct sums.   
\end{prop}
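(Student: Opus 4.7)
The plan is to prove the ideal assertion first; the two decomposition statements then follow easily from the hypothesis $B(U,W)=RB(u,w)$. Closure of $B(U,W)$ under the $R$-action is immediate from $rB(u,w)=B(ru,w)$, so the heart of the matter is closure under addition.

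For the latter, given $a=B(u_1,w_1)$ and $b=B(u_2,w_2)$, I would work inside the finitely generated submodules $U_0:=Ru_1+Ru_2$ and $W_0:=Rw_1+Rw_2$, lifting $B$ via the canonical surjections $R^2\twoheadrightarrow U_0$ and $R^2\twoheadrightarrow W_0$ (sending standard basis vectors to the $u_i$ and $w_j$) to a bilinear form $\tilde B\colon R^2\times R^2\to R$ whose matrix $M=(B(u_i,w_j))_{ij}$ is $2\times 2$, and with $\tilde B(R^2,R^2)=B(U_0,W_0)$ as subsets of $R$. By Smith normal form for $M$ over the PIR $R$, there exist $P,Q\in\GL_2(R)$ with $PMQ=\mathrm{diag}(e_1,e_2)$ and $(e_1)$ equal to the ideal of entries of $M$. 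The induced change of basis on $R^2\times R^2$ leaves the image of $\tilde B$ invariant but turns the form into $(\alpha,\beta)\mapsto e_1\alpha_1\beta_1+e_2\alpha_2\beta_2$, whose image is easily seen to be the principal ideal $(e_1)$ (take $\alpha_1=r,\beta_1=1$ and the other coordinates zero to realize $re_1$, and similarly for multiples of $e_2$). Hence $a+b\in(e_1)=B(U_0,W_0)\subseteq B(U,W)$, proving closure under addition.

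For the decomposition, assume $B(U,W)=Rd$ with $d=B(u,w)$. Any $u_0\in U$ satisfies $B(u_0,w)\in Rd$, hence $B(u_0,w)=rd=B(ru,w)$ for some $r\in R$, and so $u_0-ru\in\leftidx{^{B}}{w}{}$; this yields $U=Ru+\leftidx{^{B}}{w}{}$, and symmetrically $W=Rw+u^B$. For the directness claim under non-degeneracy, if $ru\in Ru\cap\leftidx{^{B}}{w}{}$ then $rd=B(ru,w)=0$; since $d\in B(u,W)\subseteq B(U,W)=Rd$ forces $B(u,W)=Rd$, we obtain $B(ru,W)=rRd=0$ and hence $ru=0$ by non-degeneracy. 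The main obstacle is the Smith normal form step for $2\times 2$ matrices over a PIR that may have zero divisors (e.g.\ $R=\ints/m\ints$); the classical PID reduction adapts because for $Rp+Rq=(d)$ in a PIR one has $Rp+Rq+\ann(d)=R$, which suffices to arrange a reducing matrix of unit determinant.
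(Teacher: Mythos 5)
Your treatment of the two sum decompositions and their directness is essentially the paper's (the paper gets directness from $Ru \cap \leftidx{^{B}}{w}{} \subseteq \leftidx{^{B}}{(Rw+u^{B})}{} = \leftidx{^{B}}{W}{} = 0$, whereas you argue that $rd=0$ forces $B(ru,W)=rRd=0$; both work). The genuine divergence is in proving that $B(U,W)$ is closed under addition. The paper avoids matrix normal forms entirely: it picks $u,w$ with $RB(u,w)$ maximal among the principal ideals $RB(x,y)$ (possible because a PIR is Noetherian), derives the decompositions from that choice, and then notes that for $x\in\leftidx{^{B}}{w}{}$ and $y\in u^{B}$ the set $\menge{rB(u,w)+sB(x,y)}{r,s\in R}=RB(u,w)+RB(x,y)$ is itself of the form $RB(ru+sx,w+y)$, so maximality forces $B(x,y)\in RB(u,w)$; bilinearity then yields $B(U,W)=RB(u,w)$. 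Your route instead reduces to Smith normal form for $2\times 2$ matrices over a commutative PIR with zero divisors, and, granting that, your computation of the image of the diagonalized form and the conclusion $a+b\in(e_1)$ are correct.

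However, the step you yourself flag as the main obstacle is not actually justified. The identity you invoke is false as stated: for $Rp+Rq=(d)$ it is \emph{not} generally true that $Rp+Rq+\ann(d)=R$. For example, in $R=\ints/9\ints$ take $p=3$, $q=0$, so $d=3$ and $\ann(d)=(3)$, giving $Rp+Rq+\ann(d)=(3)\neq R$. The correct unimodularity statement concerns the cofactors: writing $p=p'd$, $q=q'd$ and $d=\alpha p+\beta q$, one gets $\alpha p'+\beta q'\equiv 1 \pmod{\ann(d)}$, i.e.\ $Rp'+Rq'+\ann(d)=R$, and it is from this (after adjusting $p',q'$ modulo $\ann(d)$) that one builds an invertible reducing matrix. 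Alternatively, cite the classical fact directly: every commutative principal ideal ring is an elementary divisor ring (Kaplansky). As written, the crucial reduction is unproved, and even once repaired your argument imports a substantially heavier tool than the paper's short, self-contained maximality argument.
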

\begin{proof}
    Let $\mathcal{I}$ be the set of all ideals of the 
    form $RB(x,y)$,
    and let $RB(u,w)$ be maximal in $\mathcal{I}$. 
    Since $B(U,w)$ is an ideal and $R$ a PIR, 
    we have $B(U,w)\in \mathcal{I}$,
    and so $B(U,w)=RB(u,w)$ by maximality.
    Let $x\in U$. Then $B(x,w)=rB(u,w)$ for some $r\in R$,
    and so $x-ru\in \leftidx{^B}{w}$.
    This shows $U = Ru + \leftidx{^B}{w}$.
    The same argument on the other side shows that
    $B(u,W)=RB(u,w)$ and
    $W= Rw + u^B$.
    When $B$ is non-degenerate, then
    $Ru \cap \leftidx{^B}{w} 
     \subseteq \leftidx{^B}{(Rw+u^B)} 
     = \leftidx{^B}{W}= \{0\}$
     and thus $U = Ru \oplus \leftidx{^B}{w}$.
     In the same way, $U^B=\{0\}$ implies 
     $Rw \cap u^B = 0$.
    
    It remains to show that
    $B(\leftidx{^B}{w}, u^B) \subseteq RB(u,w)$.
    Let $x\in \leftidx{^{B}}{w}{} $
    and $y\in u^{B}$.
    Then
    \( 
    B(ru+sx,w+y) =
    rB(u,w)+sB(x,y)
    \) for 
    \(r,s\in R \).
    It follows that the ideal
    \[ I=\menge{rB(u,w)+sB(x,y)}{r,s\in R} 
    \]
    is a member of $\mathcal{I}$. 
    By maximality, $I=RB(u,w)$, and thus
    $B(x,y)\in RB(u,w)$.
    Thus $B(U,W)= RB(u,w)$ is an ideal.
\end{proof}

\begin{cor}\label{c:pirbilin}
    Let $R$ be a PIR
    and
    $B\colon U\times W\to R$ a non-degenerate bilinear form,
    where the modules $U$ and $W$ are finitely generated.
    Then there are elements 
    $u_1$, $\dotsc$, $u_r\in U$
    and $w_1$, $\dotsc$, $w_r\in W$
    such that
    \[ U = Ru_1 \oplus \dotsb \oplus Ru_r
    \quad\text{and}\quad
    W = Rw_1 \oplus \dotsb \oplus Rw_r,\]
    and $B(u_i,w_j) = \delta_{ij} d_i$
    with $Rd_1 \geq Rd_2 \geq \dotsb \geq Rd_r$.
\end{cor}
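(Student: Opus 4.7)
The approach is to iterate Proposition~\ref{p:pirbilin_id} on a decreasing chain of restricted forms, using the Noetherian property of $R$ to guarantee termination.

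First, I would apply Proposition~\ref{p:pirbilin_id} to $B$, obtaining $u_1\in U$ and $w_1\in W$ with $d_1:=B(u_1,w_1)$ satisfying $B(U,W)=Rd_1$, together with the direct sum decompositions
\[
U = Ru_1 \oplus \leftidx{^B}{w_1}{}, \qquad W = Rw_1 \oplus u_1^B.
\]
(If $U=0$, the statement is trivial with $r=0$; otherwise $d_1\neq 0$, since $B\equiv 0$ would violate non-degeneracy.) I would then verify two key facts. First, the restricted form $B\colon \leftidx{^B}{w_1}{} \times u_1^B \to R$ is again non-degenerate: if $x\in \leftidx{^B}{w_1}{}$ annihilates all of $u_1^B$, then for any $w\in W$, writing $w=rw_1+y$ with $y\in u_1^B$ gives $B(x,w)=rB(x,w_1)+B(x,y)=0$, forcing $x=0$ by non-degeneracy of the original form; symmetrically on the other side. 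Second, the final line of the proof of Proposition~\ref{p:pirbilin_id} already shows the crucial inclusion $B(\leftidx{^B}{w_1}{}, u_1^B)\subseteq Rd_1$, which is what will guarantee $Rd_2\subseteq Rd_1$.

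The submodules $\leftidx{^B}{w_1}{}$ and $u_1^B$ are finitely generated, being submodules of finitely generated modules over the Noetherian ring $R$. Inductively, I would obtain elements $u_2,\dots,u_r$ and $w_2,\dots,w_r$ giving analogous decompositions of $\leftidx{^B}{w_1}{}$ and $u_1^B$ with $Rd_2\supseteq Rd_3\supseteq\dots\supseteq Rd_r$; combined with $Rd_2\subseteq Rd_1$ from the second fact, this yields the full descending chain, and assembling the pieces provides the desired splittings of $U$ and $W$.

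The main obstacle to address carefully is termination of this recursion. Here one notes that each extracted $u_i$ is nonzero (since $d_i\neq 0$, else the residual non-degenerate form and its module would both be forced to vanish), so the chain $Ru_1\subsetneq Ru_1\oplus Ru_2\subsetneq\dots$ is strictly ascending inside the Noetherian module $U$, hence stabilizes after finitely many steps, at which point both residual modules are zero and the process halts.
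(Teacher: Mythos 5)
Your proposal is correct and follows essentially the same route as the paper: iterate \cref{p:pirbilin_id} on the restricted form $B\colon \leftidx{^B}{w_1}{}\times u_1^B\to R$ and use Noetherianity for termination. You merely spell out details the paper leaves implicit (non-degeneracy of the restriction, the containment $B(\leftidx{^B}{w_1}{},u_1^B)\subseteq Rd_1$ giving the chain $Rd_1\geq Rd_2\geq\dotsb$, and the strictly ascending chain argument), all of which check out.
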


\begin{proof}
    By \cref{p:pirbilin_id},
    $B(U,W) = RB(u_1,w_1)$
    for some $u_1\in U$ and $w_1\in W$,
    and we have 
    $U = Ru_1 \oplus \leftidx{^B}{w}{_1}$ and
    $W = Rw_1 \oplus u_1^B$.
    We can then repeat the argument for   
    $B\colon \leftidx{^{B}}{w}{_1} \times u_1^{B}\to R$.
    The process stops since $U$ and $W$ are noetherian.
\end{proof}

\begin{cor}\label{c:locpirsymform}
    Let $R$ be a local PIR such that $2$ is invertible in $R$.
    Let $Q\colon U\times U \to R$ be a symmetric bilinear
    form on the module $U$.
    Then there is $x\in U$ with
    $Q(U,U) = RQ(x,x)$.
\end{cor}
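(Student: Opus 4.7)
The plan is to combine \cref{p:pirbilin_id} with the polarization identity for symmetric forms and a standard fact about principal ideals in local rings.

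First, I would apply \cref{p:pirbilin_id} to the form $Q$ to obtain elements $u$, $w \in U$ with $Q(U,U) = R\,Q(u,w)$. Since $Q$ is symmetric and $2$ is invertible in $R$, polarization gives
\[ 2\,Q(u,w) = Q(u+w, u+w) - Q(u,u) - Q(w,w). \]
Dividing by $2$ exhibits the generator $Q(u,w)$ as an $R$-linear combination of $Q(u,u)$, $Q(w,w)$, and $Q(u+w,u+w)$. Since each of these three values trivially lies in $Q(U,U)$, we conclude
\[ Q(U,U) = R\,Q(u,u) + R\,Q(w,w) + R\,Q(u+w,u+w). \]

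It remains to observe that in a local ring, a principal ideal generated by finitely many elements is generated by one of them. Write $Q(U,U) = R d$, and set $a_1 = Q(u,u)$, $a_2 = Q(w,w)$, $a_3 = Q(u+w,u+w)$. Then $d = \sum r_i a_i$ and $a_i = s_i d$ for suitable $r_i$, $s_i \in R$, so $\bigl(1 - \sum r_i s_i\bigr) d = 0$. If all $s_i$ were in the maximal ideal $\mathfrak{m}$ of $R$, then $1 - \sum r_i s_i$ would be a unit, forcing $d=0$ and hence $a_i = 0$ for every $i$, so any $x$ would work. Otherwise some $s_i$ is a unit, whence $R a_i = R d = Q(U,U)$; taking $x$ to be the corresponding element of $\{u, w, u+w\}$ finishes the proof.

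Only the first step genuinely uses the structure developed in this section; the polarization trick is immediate once $2$ is invertible, and the local-ring lemma is a short Nakayama-style computation. There is no real obstacle—the main content is the idea that polarization reduces the question from the off-diagonal value $Q(u,w)$ to three diagonal values, after which locality closes the gap.
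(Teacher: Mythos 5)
Your proof is correct and follows essentially the same route as the paper: apply \cref{p:pirbilin_id} to get $Q(U,U)=RQ(u,w)$, then polarize to express $Q(u,w)$ in terms of the three diagonal values $Q(u,u)$, $Q(w,w)$, $Q(u+w,u+w)$. The only difference is the last step, where the paper invokes the fact that the ideals of a local PIR form a chain (so the sum of the three principal ideals equals the largest one), while you use a Nakayama-style unit-detection argument that needs only locality; both close the gap in one line.
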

\begin{proof}
    Any ideal of $R$ has the form $R\pi^n$,
    where $R\pi$ is the maximal ideal of $R$.
    In particular, the ideals of $R$ form a chain.
    
    By \cref{p:pirbilin_id}, there exist 
    $u$, $w\in U$ with
    $Q(U,U)=RQ(u,w)$.
    As 
    \[ Q(u,w) = \frac{1}{2} 
    \big( Q(u+w,u+w) - Q(u,u) - Q(w,w)
    \big),
    \]
    and since the ideals of $R$ form a chain,
    we have $Q(U,U) = RQ(x,x)$ for
    at least one
    $x\in \{u+w,u,w\}$.
\end{proof}

\begin{cor}
\label{c:pirsymform}
    Let $R$ be a finite PIR of odd order
    and 
    $Q\colon U\times U \to R$ a symmetric bilinear
    form on the module $U$.
    Then there is $u\in U$ with
    $Q(U,U) = RQ(u,u)$.
\end{cor}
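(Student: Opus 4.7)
The plan is to reduce to the local case already handled in \cref{c:locpirsymform} by decomposing $R$ as a product of local PIRs. Since $R$ is a finite commutative PIR, it is Artinian, hence isomorphic to a finite direct product of finite local rings $R_1 \times \dotsb \times R_k$, and each $R_i$ inherits the PIR property as a quotient of $R$. Because $\abs{R}$ is odd, $2$ is invertible in each $R_i$, so \cref{c:locpirsymform} applies to every factor.

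Next I would transport this decomposition to the module side. Let $e_1,\dotsc,e_k\in R$ be the orthogonal idempotents corresponding to the factors, so that $U = U_1 \oplus \dotsb \oplus U_k$ with $U_i = Ue_i$ an $R_i$-module. Since the idempotents act as the identity on their own factor and as zero on the others, we get $Q(U_i,U_j)\subseteq e_iR \cap e_jR = 0$ for $i\neq j$, so $Q$ splits as $Q = Q_1 \oplus \dotsb \oplus Q_k$ with each $Q_i\colon U_i\times U_i \to R_i$ a symmetric $R_i$-bilinear form. Also $Q(U,U) = \bigoplus_i Q_i(U_i,U_i)$ as an ideal of $R = \prod R_i$.

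Applying \cref{c:locpirsymform} to each $(U_i,Q_i)$ produces $u_i\in U_i$ with $Q_i(U_i,U_i) = R_iQ_i(u_i,u_i)$. Setting $u = u_1 + \dotsb + u_k$, orthogonality of the idempotents gives $Q(u,u) = \sum_i Q_i(u_i,u_i)$, and hence
\[
    RQ(u,u) = \bigoplus_{i=1}^k R_iQ_i(u_i,u_i)
            = \bigoplus_{i=1}^k Q_i(U_i,U_i)
            = Q(U,U),
\]
as required.

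The only real point to be careful about is the compatibility of the decomposition of $U$ with the bilinear form, but this is immediate from the idempotent argument above; the structural decomposition of a finite commutative Artinian ring into local rings is standard and does not need to be reproved. Thus there is no serious obstacle — the corollary is essentially the local case \cref{c:locpirsymform} applied coordinatewise.
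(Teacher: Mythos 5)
Your proof is correct and follows essentially the same route as the paper: decompose $R$ into finite local rings via orthogonal idempotents, split $U$ and $Q$ accordingly, apply \cref{c:locpirsymform} factorwise, and sum the resulting elements. The extra remarks (each $R_i$ is a PIR as a quotient, $2$ stays invertible) are correct details the paper leaves implicit.
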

\begin{proof}
    Any finite ring is the direct product
    of finitely many finite, local rings,
    say $R= R_1\times \dotsm \times R_{\ell}$
    \cite[Theorem~8.7]{AtiyahMacdonald69}.
    We have a corresponding orthogonal 
    idempotent decomposition
    $1 = e_1 + \dotsb + e_{\ell}$,
    where $R_i=Re_i$ and $e_i= 1_{R_i}$.
    Then $U = e_1U\oplus \dotsb \oplus e_{\ell}U$ 
    and $Q(e_iU,e_jU)=0$ for $i\neq j$,
    and $Q(e_iU,e_iU)\subseteq Re_i=R_i$.
    By \cref{c:locpirsymform}, for each $i$,
    there is an element $u_i=e_iu_i\in e_iU$
    such that $Q(e_iU,e_iU)= R Q(u_i,u_i)$.
    Then $u=u_1+\dotsb + u_{\ell}$
    has the desired property.
\end{proof}

By the usual Gram-Schmidt process,
it follows that for $U$ finitely generated,
$U/U^Q$ can be written
as an orthogonal sum of cyclic modules.

As the proof shows,  the last corollary holds
when $R$ is a direct product of finitely many
local PIRs with $2$ invertible.
On the other hand, \cref{c:pirsymform} does not hold for
arbitrary PIRs,
not even when $2$ is invertible.
For example, 
the form over the polynomial ring $R=\GF{3}[x]$ 
on $U=R^2$ with Gram matrix
$\begin{psmallmatrix}
x & 1 \\ 1 & x-1
\end{psmallmatrix}
$
can not be written as orthogonal sum
\cite[Example~6.19(ii)]{Gerstein08}.

For symplectic forms, we have the following:

\begin{cor}\label{c:piralter}
    Let $R$ be a PIR and
    $\omega\colon V\times V \to R$ 
    an alternating and
    non-degenerate form on the 
    finitely generated $R$-module $V$.
    Then 
    $V= Re_1 \oplus \dotsb \oplus Re_m \oplus  
    Rf_1 \oplus \dotsb \oplus Rf_m$,
    where $\omega(e_i,f_j) = \delta_{ij}d_i$ 
    and $\omega(e_i,e_j)=\omega(f_i,f_j)=0$ for all $ i$, $j$,
    and $Rd_1 \geq Rd_2 \geq \dotsb \geq Rd_m$.
\end{cor}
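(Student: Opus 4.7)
The plan is to adapt the proof of \cref{c:pirbilin} to the alternating setting. First I would apply \cref{p:pirbilin_id} to $\omega\colon V\times V\to R$ to obtain elements $e_1,f_1\in V$ with $\omega(V,V)=R\omega(e_1,f_1)=:Rd_1$ and the two splittings
\[ V = Re_1\oplus {}^\omega f_1 = Rf_1\oplus e_1^\omega. \]
Because $\omega$ is alternating, $\omega(x,y)=-\omega(y,x)$, so $X^\omega={}^\omega X$ for every submodule $X$; in particular $Rf_1\subseteq f_1^\omega={}^\omega f_1$ and $Re_1\subseteq e_1^\omega$.

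Next I would set $V_1:= e_1^\omega\cap f_1^\omega$ and prove $V=Re_1\oplus Rf_1\oplus V_1$. Given $x\in {}^\omega f_1$, the second splitting writes $x=sf_1+y$ with $y\in e_1^\omega$; since $x,sf_1\in f_1^\omega$, also $y\in f_1^\omega$, so $y\in V_1$. Thus ${}^\omega f_1=Rf_1+V_1$, and the sum is direct because $Rf_1\cap V_1\subseteq Rf_1\cap e_1^\omega=0$ by the second splitting. Combined with $V=Re_1\oplus {}^\omega f_1$, this yields the claimed triple direct sum. By construction $\omega(e_i,e_j)=\omega(f_i,f_j)=0$ for $i=j=1$ (alternating) and $\omega(e_1,f_1)=d_1$.

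I would then verify that $\omega$ restricts to a non-degenerate alternating form on $V_1$: if $v\in V_1$ satisfies $\omega(v,V_1)=0$, then since $v\in e_1^\omega\cap f_1^\omega$ we also have $\omega(v,Re_1)=\omega(v,Rf_1)=0$, so $\omega(v,V)=0$ and $v=0$ by non-degeneracy of $\omega$ on $V$. Moreover $\omega(V_1,V_1)\subseteq \omega(V,V)=Rd_1$, so iterating the construction on $(V_1,\omega|_{V_1})$ yields $e_2,f_2,d_2$ with $Rd_2\subseteq Rd_1$ automatically. The iteration terminates in finitely many steps because $V$ is noetherian (PIRs are noetherian and $V$ is finitely generated), producing the desired symplectic decomposition with $Rd_1\geq Rd_2\geq\dotsb\geq Rd_m$.

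The main obstacle is the direct-sum decomposition $V=Re_1\oplus Rf_1\oplus V_1$; a naive modular-law argument is not generally valid in module lattices, but the trick of using both splittings from \cref{p:pirbilin_id} in tandem—one to pull off $Re_1$ and the other to pull $Rf_1$ out of the complement—handles it cleanly. Everything else (orthogonality between $Re_i\oplus Rf_i$ and later summands, non-degeneracy on $V_1$, descending chain of ideals) follows mechanically from the properties already established in \cref{p:pirbilin_id} and the alternating identity $X^\omega={}^\omega X$.
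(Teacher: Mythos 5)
Your proof is correct and follows essentially the same route as the paper: both extract a hyperbolic pair $(e_1,f_1)$ from \cref{p:pirbilin_id} and recurse on its orthogonal complement, using the two splittings $V=Re_1\oplus{}^{\perp}f_1=Rf_1\oplus e_1^{\perp}$ together with $e_1\in e_1^{\perp}$, $f_1\in f_1^{\perp}$. You merely make explicit (the decomposition $V=Re_1\oplus Rf_1\oplus V_1$, non-degeneracy on $V_1$, the chain $Rd_1\geq Rd_2\geq\dotsb$) what the paper's terse appeal to the iteration of \cref{c:pirbilin} leaves to the reader.
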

\begin{proof}
    As in the proof of \cref{c:pirbilin}, begin with 
    $u_1=e_1$ and $w_1=f_1$ such that 
    $\omega(V,V) = R\omega(e_1,f_1)$.
    Then by \cref{p:pirbilin_id},
    $ V = Re_1 \oplus \leftidx{^{\perp}}{f_1}
    = Rf_1 \oplus e_1^{\perp}$.
    As $\omega$ is alternating, we have $e_1\in e_1^{\perp}$
    and $f_1\in f_1^{\perp} = \leftidx{^{\perp}}{f_1}$.
    So in the next step, we can
    choose $u_2=f_1$ and $w_2= e_1$. The proof follows.
\end{proof}

\section{Signs of automorphisms}
\label{sec:sign}

Let $U$ be a finite abelian group.
Every automorphism $g$ of $U$ permutes $U$,
and we write $\sign(g)=\sign_U(g)$ 
for the sign of this permutation.
Thus we have a natural character
$\sign\colon \Aut(U) \to \{ \pm 1\}$.
This character has been studied by a number of people,
in particular P.~Cartier~\cite{Cartier70}
and A.~Brunyate and P.~L.~Clark~\cite{BrunyateClark15}.
We state 
and prove  
the results we need later. 

In the first two results, we actually do not need that 
$U$ is abelian, and so we use multiplicative notation
(so $-u$ becomes $u^{-1}$, and so on).
These results are due to Cartier~\cite[p.~38--39]{Cartier70}.

\begin{lemma}
\label{l:gausszolotarev}
  Let $\pi$ be a permutation of a finite group $U$
  which commutes with taking inverses:
  for all $u\in U$, we have $(u^{-1})\pi = (u\pi)^{-1}$.
  Choose $P\subseteq U$ such that
  $U = P \dcup P^{-1} \dcup I$ (disjoint union),
  where $I=\menge{ u\in U }{ u^2 = 1 }$.
  Then
  \[ \sign(\pi)
     = (-1)^{\abs{P\pi\cap P^{-1}}} \sign(\pi_I).
  \]
\end{lemma}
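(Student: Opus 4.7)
The plan is to show that $\pi$ respects the decomposition $U = I \sqcup P \sqcup P^{-1}$ in a useful way, reducing the sign computation to a cycle-structure analysis on $P \cup P^{-1}$.

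First I would observe that since $\pi$ commutes with inversion, $u^2=1$ is equivalent to $(u\pi)^2=1$, so $I$ is $\pi$-stable; consequently $P\cup P^{-1}$ is also $\pi$-stable. This gives the multiplicative factorization
\[
\sign(\pi) = \sign(\pi_I) \cdot \sign(\pi|_{P\cup P^{-1}}),
\]
and reduces everything to the claim $\sign(\pi|_{P\cup P^{-1}}) = (-1)^{|P\pi\cap P^{-1}|}$.

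Next I would analyze the cycle structure of $\sigma:=\pi|_{P\cup P^{-1}}$ under the fixed-point-free involution $\iota\colon u\mapsto u^{-1}$, which commutes with $\sigma$. For any $\sigma$-cycle $C=(u_1,\dots,u_r)$, the sequence of inverses $C^{-1}=(u_1^{-1},\dots,u_r^{-1})$ is again a $\sigma$-cycle. Either (b) $C\neq C^{-1}$ as sets, so $C$ and $C^{-1}$ form a disjoint pair contributing $(-1)^{2(r-1)}=1$ to the sign; or (a) $C=C^{-1}$, forcing $r$ to be even and $u_{i+r/2}=u_i^{-1}$ for all $i$, so that $C$ contributes $(-1)^{r-1}=-1$. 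Therefore $\sign(\sigma)=(-1)^s$, where $s$ is the number of self-inverse cycles.

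The main step is to show that $s\equiv |P\pi\cap P^{-1}| \pmod 2$. I would count the number of ``P-to-$P^{-1}$ transitions'' (consecutive pairs $u_i,u_{i+1}$ with $u_i\in P$ and $u_{i+1}\in P^{-1}$) within each cycle: their total equals $|P\pi\cap P^{-1}|$. Inside a type-(a) cycle of length $r=2k$, the relation $u_{i+k}=u_i^{-1}$ means the second half of the cycle reverses the sides (P versus $P^{-1}$) of the first half; if the first half has $a$ transitions of type $P\to P^{-1}$ and $b$ of type $P^{-1}\to P$, then the second half has $b$ of type $P\to P^{-1}$ and $a$ of type $P^{-1}\to P$, giving $a+b$ transitions of type $P\to P^{-1}$ in the cycle. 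Since the sides must flip when passing from $u_k$ to $u_{k+1}=u_1^{-1}$, the total number of side-flips $a+b$ in the first half is odd. For a type-(b) paired cycle, by the same inversion argument the total number of $P\to P^{-1}$ transitions in $C\cup C^{-1}$ is $a+b$ where $a+b$ equals the total number of side-flips in $C$, which is even because $C$ returns to its starting point. Adding over all cycles yields $|P\pi\cap P^{-1}|\equiv s\pmod 2$.

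The main obstacle is the bookkeeping in the previous paragraph: keeping track of how the sides of $u_i$ and $u_i^{-1}$ interact inside a self-inverse cycle and, correspondingly, in paired cycles. Once this parity computation is done, combining it with the initial factorization finishes the proof.
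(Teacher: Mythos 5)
Your proof is correct, but it follows a genuinely different route from the one in the paper. You keep $\pi$ itself and analyze its cycle structure on $P\cup P^{-1}$: using that inversion is a fixed-point-free involution commuting with $\pi$ there, you show each cycle is either paired with its inverse (contributing $+1$) or self-inverse of even length (contributing $-1$), and then a transition-counting argument identifies the parity of the number of self-inverse cycles with the parity of $\abs{P\pi\cap P^{-1}}$. The paper instead modifies the permutation: it multiplies $\pi$ by the product $\tau$ of the $\abs{P\pi\cap P^{-1}}$ transpositions $(u,u^{-1})$ with $u\in P\pi\cap P^{-1}$, observes that $\pi\tau$ then maps $P$, $P^{-1}$ and $I$ each onto itself with the actions on $P$ and $P^{-1}$ conjugate under inversion, so that $\pi\tau$ is even on $P\dcup P^{-1}$, and reads off the sign from $\sign(\tau)$. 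The paper's correction trick is shorter and avoids all cycle bookkeeping; your version is longer but yields the extra structural fact that $\sign(\pi|_{P\cup P^{-1}})=(-1)^{s}$ with $s$ the number of inversion-stable cycles, which is the classical statement about permutations commuting with a fixed-point-free involution. One phrasing quibble: in the self-inverse case the parity of $a+b$ is forced because the walk $u_1\to\dots\to u_{k+1}=u_1^{-1}$ starts and ends on opposite sides (not because of the single step from $u_k$ to $u_{k+1}$), but that is what your computation actually uses, so the argument stands.
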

\begin{proof}
  The assumption on $\pi$ yields that
  $\pi$ maps $I$ onto itself.
  Let $\tau$ be the product of all the transpositions
  $(u,u^{-1})$ with $u\in P\pi\cap P^{-1}$.
  Then $\pi\tau$ maps $P$, $P^{-1}$ and $I$ onto itself, 
  and the permutations on $P$ and $P^{-1}$ are related  
  by $(u^{-1})(\pi\tau) = (u\pi\tau)^{-1}$.
  Thus the restriction of $\pi\tau$ to
  $P\dcup P^{-1}$ is an even permutation.
  The result follows.  
\end{proof}

\begin{lemma}\label{l:signreduc}
  Let $U$ be a finite group of odd order
  and $N\nteq U$  a normal subgroup.
  Let $\alpha\in \Aut(U)$ be an automorphism that
  maps $N$ into itself.
  Then $\sign(\alpha) = \sign(\alpha_{U/N}) \sign(\alpha_N)$.  
\end{lemma}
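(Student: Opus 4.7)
The strategy is to decompose the permutation of $U$ induced by $\alpha$ according to the cycles of the induced permutation $\bar{\alpha}:=\alpha_{U/N}$ on $U/N$, compute the sign of $\alpha$ on each block, and multiply. Since $|U|$ is odd, both $|N|$ and $|U/N|$ are odd.

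Fix a transversal $\{t_1,\dotsc,t_{|U/N|}\}$ for $N$ in $U$, so that $U = \bigdcup_i Nt_i$. For a cycle $(Nt_1,\dotsc,Nt_c)$ of $\bar{\alpha}$, write $t_i\alpha = s_i t_{i+1}$ with $s_i \in N$ (indices mod $c$), and identify each $Nt_i$ with $N$ via $nt_i \leftrightarrow n$. A direct calculation using only that $\alpha$ is a homomorphism and preserves $N$ shows that under this identification, $\alpha\colon Nt_i \to Nt_{i+1}$ becomes $n\mapsto (n\alpha_N)\cdot s_i$. Composing around the cycle yields
\[
   \alpha^c|_{Nt_1} \;=\; R_s \circ \alpha_N^c,
\]
where $R_s$ denotes right-multiplication by the element $s:=(s_1\alpha^{c-1})(s_2\alpha^{c-2})\dotsm s_c \in N$.

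The combinatorial heart is the following sign formula: if $\sigma$ is a permutation of $X = X_1\dcup\dotsb\dcup X_c$ with $\abs{X_i}=m$ that cyclically shifts the $X_i$, and $\tau := \sigma^c|_{X_1}$, then $\sign(\sigma) = (-1)^{(c-1)m}\sign(\tau)$. This is proved by noting that orbits of $\sigma$ on $X$ correspond bijectively to orbits of $\tau$ on $X_1$ with lengths scaled by $c$, and then invoking the identity $\sign(\pi) = (-1)^{\abs{X}-(\text{number of $\pi$-orbits})}$. Applied to our block with $m=\abs{N}$, and noting that $\sign(R_s)=1$ (since right-translation by $s$ on $N$ has only cycles of length equal to the order of $s$, which divides $\abs{N}$ and is therefore odd), and using that $m$ is odd, we obtain
\[
   \sign(\alpha|_{\text{block}})
   \;=\; (-1)^{(c-1)m}\sign(\alpha_N)^c
   \;=\; (-1)^{c-1}\sign(\alpha_N)^c.
\]

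Finally, multiplying over the $r$ cycles of $\bar{\alpha}$, of lengths $c_1,\dotsc,c_r$ with $\sum_j c_j = \abs{U/N}$, gives
\[
   \sign(\alpha)
   \;=\; \prod_{j=1}^{r} (-1)^{c_j-1}\sign(\alpha_N)^{c_j}
   \;=\; (-1)^{\abs{U/N}-r}\,\sign(\alpha_N)^{\abs{U/N}}
   \;=\; \sign(\bar{\alpha})\,\sign(\alpha_N),
\]
using $(-1)^{\abs{U/N}-r}=\sign(\bar\alpha)$ and that $\abs{U/N}$ odd implies $\sign(\alpha_N)^{\abs{U/N}}=\sign(\alpha_N)$. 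The main obstacle is getting the signs right in the per-block formula---the interplay between the parities of $c$ and $m$ is delicate, and it is crucial that $m=\abs{N}$ is odd so that $(-1)^{(c-1)m}=(-1)^{c-1}$; once this is settled, everything else is bookkeeping.
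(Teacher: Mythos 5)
Your proof is correct, but it takes a genuinely different route from the paper. The paper deduces the reduction formula from the preceding Gauss--Zolotarev-type lemma (\cref{l:gausszolotarev}): one chooses half-systems $P_1\subseteq N$ and $P_2\subseteq U/N$, lifts $P_2$ to $\hat{P_2}\subseteq U$, sets $P=\hat{P_2}\cup P_1$, and reads off $\sign(\alpha)=(-1)^{\abs{P\alpha\cap P^{-1}}}$ as a product of the two corresponding signs, using that $\abs{N}$ is odd to kill the exponent $\abs{N}\cdot\abs{P_2\alpha\cap P_2^{-1}}$. Your argument is instead the classical imprimitive-action (wreath-product) computation: decompose $U$ into the $N$-cosets, follow the cycles of $\alpha_{U/N}$, identify the return map on a block as $\alpha_N^c$ composed with a right translation, and count orbits via $\sign(\pi)=(-1)^{\abs{X}-\#\mathrm{orbits}}$. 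Both are complete; the paper's version is shorter because \cref{l:gausszolotarev} is needed elsewhere anyway (it generalizes the Gauss--Schering lemma), while yours is self-contained, does not rely on the inversion pairing at all, and makes visible exactly where oddness enters: in $(-1)^{(c-1)\abs{N}}=(-1)^{c-1}$, in $\sign(\alpha_N)^{\abs{U/N}}=\sign(\alpha_N)$, and in $\sign(R_s)=1$ because right translation by $s$ decomposes into cycles of the odd length $\ord(s)$. One small point worth making explicit in a final write-up is that the blockwise identifications $nt_i\leftrightarrow n$ are harmless because the sign of a permutation is invariant under conjugation by any bijection; with that said, every step checks out.
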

\begin{proof}
  Choose $P_1\subseteq N$ and $P_2\subseteq U/N$ such that
  $N= P_1 \dcup P_1^{-1} \dcup\{1 \}$ and
  $U/N = P_2 \dcup P_2^{-1} \dcup \{1_{U/N}\}$.
  Let $\hat{P_2} = \menge{ u\in U }{ Nu \in P_2 }$
  be the pre-image of $P_2$ in $U$.
  Then for $P= \hat{P_2} \cup P_1$, 
  we have $U=P\dcup P^{-1} \dcup \{1\}$.
  By \cref{l:gausszolotarev}, we have
  \begin{align*}
    \sign(\alpha) = (-1)^{\abs{P\alpha\cap P^{-1}}}
      &= (-1)^{ \abs{ \hat{P_2}\alpha \cap \hat{P_2}^{-1}} }
        (-1)^{ \abs{ P_1\alpha \cap P_1^{-1}} }
      \\
      &= (-1)^{ \abs{N} \abs{ P_2\alpha_{U/N} \cap P_2^{-1} } }
        \sign(\alpha_N)
      \\ &
      = (-1)^{ \abs{ P_2\alpha_{U/N} \cap P_2^{-1} } }
        \sign( \alpha_N )
      \\&
      = \sign( \alpha_{U/N} ) \sign( \alpha_N ).
  \qedhere
  \end{align*}
\end{proof}

Let $R$ be a finite ring and 
$a\in R$ be invertible.
We write $\sign_R(a)$ for the sign of the permutation of $R$
defined by
$r\mapsto ra$.

\begin{lemma}[Zolotarev, Lerch, Frobenius] 
\label{l:zolofrob}\hfill
  \begin{enumerate}
  \item Let $\crp{F}$ be a finite field of odd order
        and $0\neq a\in \crp{F}$.
        Then $\sign_{\crp{F}}(a) = 1$ if and only if
        $a$ is a square in $\crp{F}$.
  \item Let $R=\ints / m$ with $m$ odd and
          $a\in R^*$.
        Then $\sign_R(a) = \Jacobi*{a}{m}$
        (the Jacobi symbol).
  \end{enumerate}
\end{lemma}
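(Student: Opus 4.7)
For part (a), the plan is to compute $\sign_{\crp{F}}(g)$ for a single generator $g$ of $\crp{F}^*$ and then extend by multiplicativity. Multiplication by $g$ fixes $0$ and permutes the $q-1$ nonzero elements of $\crp{F}$ (with $q=\abs{\crp{F}}$) as a single cycle, whose sign is $(-1)^{q-2} = -1$ since $q$ is odd. Because $\sign_{\crp{F}}$ restricts to a group homomorphism $\crp{F}^* \to \{\pm 1\}$ sending a generator to $-1$, it must agree with the unique nontrivial such homomorphism, whose kernel is precisely the subgroup of squares.

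For part (b), the strategy is two successive reductions via \cref{l:signreduc}, ending in an application of (a). First, using the Chinese Remainder Theorem, write $R = \ints/m\ints \iso R_1 \times \dotsb \times R_k$ with $R_i = \ints/p_i^{e_i}\ints$, so that multiplication by $a$ acts componentwise. Taking $N = R_1 \times \{0\} \times \dotsb \times \{0\}$ (with $R/N \iso R_2 \times \dotsb \times R_k$) and iterating \cref{l:signreduc} yields $\sign_R(a) = \prod_i \sign_{R_i}(a \bmod p_i^{e_i})$. Second, for each $R_i = \ints/p^e\ints$, take $N = pR_i$, which is isomorphic as an abelian group to $\ints/p^{e-1}\ints$ via $pk \leftrightarrow k$ with multiplication by $a$ corresponding on both sides; the quotient is $\ints/p\ints$ carrying the same action. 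Induction on $e$ then gives $\sign_{\ints/p^e\ints}(a) = \sign_{\ints/p\ints}(a)^e$.

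Combining these reductions and applying (a) to the prime field $\ints/p_i\ints$, one obtains $\sign_R(a) = \prod_i \Jacobi*{a}{p_i}^{e_i}$, which equals $\Jacobi*{a}{m}$ by the definition of the Jacobi symbol. The hypotheses of \cref{l:signreduc} are easy to verify: all the groups involved have odd order (since $m$ is odd), and $R$-linearity of multiplication by $a$ ensures that each distinguished subgroup is stable under the automorphism. I anticipate no real obstacle beyond the bookkeeping required to track these identifications; the genuine content of the lemma is concentrated in part (a), where the cycle structure of multiplication by a generator is explicit.
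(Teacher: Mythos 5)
Your proof is correct and follows essentially the same route as the paper: part (a) via the cycle structure of multiplication by a generator together with multiplicativity of $\sign$ on $\crp{F}^*$, and part (b) by repeated application of \cref{l:signreduc} to reduce to the prime case and then invoking (a) and the multiplicativity of the Jacobi symbol. The only (immaterial) difference is that you organize the induction by first splitting off coprime factors via CRT and then descending through $p^e \to p^{e-1}$, whereas the paper peels off an arbitrary factorization $m = k\ell$ directly.
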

\begin{proof}
  $\crp{F}^*$ is a cyclic group of even order, 
  and the squares form the unique subgroup of index $2$.
  When $\crp{F}^* =\erz{a}$, then 
  $\sign_{\crp{F}}(a) = -1$, because 
  the corresponding permutation forms one long
  cycle of length $\abs{\crp{F}^*} = \abs{\crp{F}}-1$.
  This shows the first part.
  
  The second part can be proved using \cref{l:signreduc}
  and induction on $m$:
  For $m=k\ell$ with $k$, $\ell >1$,
  we have $R/kR \iso \ints / k$ and 
  $kR \iso \ints/ \ell$.
  For $m$ prime, the result follows from the first part.
\end{proof}

We see that \cref{l:gausszolotarev} generalizes
the Gauss-Schering lemma from elementary number theory.
The next result was proved by I.~Schur~\cite[p.~151]{Schur21} for
$R=\ints/m\ints$,
by P.~Cartier~\cite[p.~41]{Cartier70}
for finite fields,
and by Brunyate and Clark~\cite[Theorem~6.1]{BrunyateClark15}
as part of a more general result.

\begin{lemma}\label{l:detsign}
  Let $R$ be a commutative ring of finite, odd order,
  and let $g\in \Aut_R(R^d)= \GL(d,R)$.
  Then 
  \[\sign_{R^d}(g) = \sign_R( \det(g)).
  \]
  (On the left, we view $g$ as a permutation of $R^d$,
  and on the right, $\det(g)$ as a permutation of $R$.)
\end{lemma}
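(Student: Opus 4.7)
My plan is to reduce to the case of a finite field by two successive devissages: first, to local rings via the product decomposition of a finite commutative ring, and then, within a local ring, to its residue field via the $\mathfrak{m}$-adic filtration. In each step, the sign character is multiplicative in a very direct way because every relevant set has odd cardinality.

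\textbf{Step 1 (Reduction to a local ring).} Write $R=R_1\times\dotsb\times R_\ell$ with $R_i$ finite local of odd order, as in the proof of \cref{c:pirsymform}. Then $R^d=\bigoplus_i R_i^d$ and $g\in\GL(d,R)$ decomposes as $g=(g_1,\dotsc,g_\ell)$. The permutation of $R^d$ induced by $g$ is the direct product of the permutations of $R_i^d$ induced by $g_i$, and similarly for $\det g$ acting on $R$. A standard identity gives $\sign(\sigma\times\tau)=\sign(\sigma)^{|Y|}\sign(\tau)^{|X|}$; since every $|R_i^d|$ and $|R_i|$ is odd, both sides of the lemma factor as products over $i$. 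Hence it suffices to treat the local case.

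\textbf{Step 2 (Filtration in the local case).} Assume $R$ is local with maximal ideal $\mathfrak{m}$, residue field $k=R/\mathfrak{m}$, and $\mathfrak{m}^n=0$. The filtration
\[
    R^d=\mathfrak{m}^0 R^d\supset\mathfrak{m} R^d\supset\dotsb\supset\mathfrak{m}^n R^d=0
\]
consists of $g$-invariant subgroups (as $g$ is $R$-linear). Iterating \cref{l:signreduc} gives
\[
    \sign_{R^d}(g)=\prod_{i=0}^{n-1}\sign_{V_i}(\bar g),\qquad V_i:=\mathfrak{m}^iR^d/\mathfrak{m}^{i+1}R^d\cong (\mathfrak{m}^i/\mathfrak{m}^{i+1})^d.
\]
A choice of $k$-basis of $\mathfrak{m}^i/\mathfrak{m}^{i+1}$ (of dimension $d_i$) identifies the action of $g$ on $V_i$ with $d_i$ block-diagonal copies of $\bar g\in\GL(d,k)$, so $V_i\cong k^{d\cdot d_i}$ with $g$ acting by a matrix of determinant $(\det\bar g)^{d_i}$. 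Applying the same filtration to $R$ itself with the action of multiplication by $\det g$ yields
\[
    \sign_R(\det g)=\prod_{i=0}^{n-1}\sign_{\mathfrak{m}^i/\mathfrak{m}^{i+1}}(\det\bar g),
\]
where $\mathfrak{m}^i/\mathfrak{m}^{i+1}\cong k^{d_i}$ and the action is scalar multiplication by $\det\bar g$, whose determinant is also $(\det\bar g)^{d_i}$. Thus, granted the field case, each corresponding factor equals $\sign_k(\det\bar g)^{d_i}$, and the two products coincide.

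\textbf{Step 3 (The field case).} With $R=k=\mathbb{F}_q$, $q$ odd, both $g\mapsto\sign_{k^d}(g)$ and $g\mapsto\sign_k(\det g)$ are homomorphisms $\GL(d,k)\to\{\pm 1\}$, so it is enough to check them on a generating set. Take the transvections $I+aE_{ij}$ ($i\neq j$) and the diagonal matrices $\operatorname{diag}(a,1,\dotsc,1)$. A transvection fixes the hyperplane $v_i=0$ and otherwise cycles $v\mapsto v+av_ie_j$; each nontrivial orbit has length equal to the additive order of $av_i$, which is the odd prime $p=\operatorname{char}(k)$, so the sign is $+1$, matching $\sign_k(1)=1$. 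For $g=\operatorname{diag}(a,1,\dotsc,1)$, the action on $k^d=k\oplus k^{d-1}$ is the direct product of $q^{d-1}$ copies of multiplication by $a$ on $k$ (the trivial action on the second factor contributing $q^{d-1}$ identical fibres), giving
\[
    \sign_{k^d}(g)=\sign_k(a)^{q^{d-1}}=\sign_k(a)=\sign_k(\det g),
\]
as $q^{d-1}$ is odd.

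\textbf{Expected obstacle.} No single step is technically difficult; the main thing to get right is the bookkeeping in Step 2, namely identifying the $k$-linear action on $V_i=(\mathfrak{m}^i/\mathfrak{m}^{i+1})^d$ as $d_i$ block-diagonal copies of $\bar g$ (so its determinant is $(\det\bar g)^{d_i}$), and matching this cleanly with the analogous computation for $\sign_R(\det g)$. Everything else is a direct consequence of \cref{l:signreduc}, \cref{l:zolofrob}, and the observation that signs of direct products of permutations of odd-order sets multiply without parity corrections.
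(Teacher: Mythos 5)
Your proof is correct, and it shares the paper's overall skeleton (reduce to finite local rings via the product decomposition, then verify the identity on a convenient set of matrices using \cref{l:signreduc}), but the middle of the argument is genuinely different. The paper handles a local ring $R$ directly: since every column of an invertible matrix over a local ring contains a unit, Gauss elimination writes $g$ as a product of elementary matrices over $R$ itself, and for each such matrix \cref{l:signreduc} applied to a single coordinate axis $Re_j$ gives the claim at once — no passage to the residue field and no appeal to \cref{l:zolofrob} is needed inside this lemma. You instead insert an extra devissage through the $\mathfrak{m}$-adic filtration of $R^d$ and of $R$, which forces the bookkeeping with the multiplicities $d_i=\dim_k(\mathfrak{m}^i/\mathfrak{m}^{i+1})$ and the determinants $(\det\bar g)^{d_i}$ on both sides, and then a separate verification over the field on transvections and diagonal matrices. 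That bookkeeping is carried out correctly (the key points — that $\bar g$ acts on $(\mathfrak{m}^i/\mathfrak{m}^{i+1})^d$ as $d_i$ copies of $\bar g$, that multiplication by $\det g$ acts on $\mathfrak{m}^i/\mathfrak{m}^{i+1}\cong k^{d_i}$ as the scalar $\det\bar g$, and that $\sign_k$ is multiplicative — all hold), so the proof is sound; it is just longer than necessary, since your field-level computation for transvections (orbits of odd prime length $p$) is essentially the same observation the paper makes once, uniformly, for elementary matrices over the local ring. What your route buys is a cleaner conceptual statement — everything is literally reduced to the Zolotarev--Frobenius field case — at the cost of the filtration bookkeeping; the paper's route buys brevity by exploiting that elementary matrices already generate $\GL(d,R)$ over a local ring.
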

\begin{proof}
  Suppose that $R = R_1 \times R_2$, a direct product of two
  rings.
  Let $1=e_1+e_2$ be the corresponding idempotent decomposition.
  Any $R$-module $U$ is the direct sum
  $U=Ue_1 \oplus Ue_2$.
  We see that $\det(g) = \det(ge_1) + \det(ge_2)$,
  where $\det(ge_i) \in R_i=Re_i$ is the determinant
  of $ge_i$ as element in $\GL(d,R_i)$.
  Also, $\det(g)$ acts on $R_i$ in the same
  way as $\det(ge_i)=\det(g)e_i$,
  and $g$ acts on $R_i^d = R^de_i$ as $ge_i$ does.
  By \cref{l:signreduc},
  we are reduced to prove the lemma 
  for $R_1$ and $R_2$.
  
  Since any finite ring $R$ is the direct product
  of finitely many \emph{local} rings
  \cite[Theorem~8.7]{AtiyahMacdonald69},
  we may assume that $R$ is local.
  Then any column of an invertible matrix over $R$
  contains at least one unit.
  By the usual Gauss elimination process,
  it follows that an invertible matrix is a product
  of matrices which differ in exactly one entry from
  the identity matrix
  (when this entry is on the main diagonal,
  it must be a unit of $R$).
  For matrices of this kind, the assertion follows
  immediately from \cref{l:signreduc}.
\end{proof}

For the rest of this section, 
we assume again that
$R$ is a finite, commutative ring
with a primitive additive
 character $\lambda\colon R\to \compl^*$,
in which $2$ is invertible.

\begin{defi}
   Let $B$, $C\colon U \times W\to R$ be 
   two non-degenerate bilinear forms
   on the finite $R$-modules $U$ and $W$.
   Then there is a unique $\alpha\in \Aut_R(U)$
   such that $B(u,w) = C(u\alpha,w)$ for all $u\in U$,
   $w\in W$.
   We define $\sign(B/C):= \sign(\alpha)$.
\end{defi}
The existence of $\alpha$ in the preceding definition 
follows since $B$ and $C$ both induce isomorphisms
$U\to  \Hom(W,R)$ 
(\cref{l:perp_basic}).

We will only need the case $U=W$.
The definition is maybe motivated by the following lemma
and its proof:

\begin{lemma}
 \label{l:signdisc}
    Suppose $R$ is a finite field
    and $B$, $C\colon U\times U\to R$ two bilinear forms.
    Then $\sign(B/C)=1$ if and only if
    $\disc(B) = \disc(C)$.
\end{lemma}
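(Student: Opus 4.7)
The plan is to translate the question about $\sign(B/C)$ into a question about determinants by choosing a basis of $U$ and then to invoke the two preceding lemmas in sequence.

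First I would pick an $R$-basis of the finite-dimensional vector space $U$ over the field $R$, and let $M_B$, $M_C$ be the Gram matrices of $B$ and $C$ in this basis, using the row-vector convention $B(u,w) = u M_B w^{\top}$. Since $B$ and $C$ are non-degenerate (this is implicit in the definition of $\sign(B/C)$, which requires $\alpha\in\Aut_R(U)$), both $M_B$ and $M_C$ are invertible. Writing $A$ for the matrix of $\alpha$ in the chosen basis, the defining relation $B(u,w)=C(u\alpha,w)$ becomes $u M_B w^{\top} = u A M_C w^{\top}$ for all $u,w$, so $M_B = A M_C$ and therefore $\det A = \det(M_B)\det(M_C)^{-1}$.

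Next I would chain together the earlier lemmas: Lemma~\ref{l:detsign} gives $\sign(\alpha) = \sign_R(\det A)$, and the finite-field part of Lemma~\ref{l:zolofrob} gives $\sign_R(\det A) = 1$ if and only if $\det A$ is a square in $R^*$. Combining these,
\[
  \sign(B/C) = 1 \iff \det(M_B)\det(M_C)^{-1}\in (R^*)^2.
\]
Since the discriminant of a bilinear form on a vector space is precisely the class of the determinant of its Gram matrix in $R^*/(R^*)^2$ (a change of basis multiplies $M$ by $P$ on one side and $P^{\top}$ on the other, changing $\det M$ by the square $\det(P)^2$), the right-hand side is exactly the condition $\disc(B) = \disc(C)$.

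There is no real obstacle: the argument is a bookkeeping assembly of \cref{l:detsign} and \cref{l:zolofrob}. The only point that warrants a line of care is the remark that $\disc$ is well defined modulo squares for a non-symmetric bilinear form, but this is immediate from the transformation rule $M \mapsto P M P^{\top}$ of Gram matrices.
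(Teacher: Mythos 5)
Your argument is correct and is essentially the paper's own proof: both identify the automorphism $\alpha$ with the matrix $G_B G_C^{-1}$ via Gram matrices in a fixed basis, then apply \cref{l:detsign} and \cref{l:zolofrob} to reduce $\sign(B/C)$ to whether $\det G_B\det G_C^{-1}$ is a square. The only cosmetic difference is that the paper routes through the standard inner product $S$ to express $B$ and $C$, whereas you write the Gram matrices directly; the content is identical.
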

\begin{proof}
        Let $S$ be the standard inner product with 
    respect to some basis of $U$,
    and let $G_B$ and $G_C$ be the Gram matrices of 
    $B$ and $C$ 
    with respect to that basis.
    If we view $G_B$ as the matrix of a linear map,
    then obviously 
    $B(u,w)= S(uG_B,w)$ for all $u$, $w\in U$.
    Similarly, $C(u,w)=S(uG_C,w)$.
    Thus $B(u,w)= C(uG_B (G_C)^{-1},w)$
    and so 
    $\sign(C/B) = \sign (G_BG_C^{-1}) 
    = \sign_{R}(\det G_B) \sign_R (\det G_C)^{-1}$
    by \cref{l:detsign}.
    By \cref{l:zolofrob}, $\sign_{R}$ is the quadratic
    character on $R$.
    By definition,
    the discriminant of $B$
    is $\det G_B$ modulo the squares in $R$.
    The result follows.
\end{proof}

\begin{lemma}\label{l:disc_dirprod}
    Let $B\colon U\times U\to R$ be a non-degenerate 
    bilinear form on the finite module $U$.
    Suppose that $U = X \oplus Y$ with
    $B(Y,X) = 0$.
    When $Q_X$ and $Q_Y$ are non-degenerate 
    forms on $X$ and $Y$, respectively,
    then 
    $\sign((Q_X\oplus Q_Y)/B) 
      = \sign(Q_X/B_{X}) \sign(Q_Y/B_{|Y})$.
\end{lemma}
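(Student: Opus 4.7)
The plan is to describe $\alpha \in \Aut_R(U)$ explicitly as a block triangular map with respect to the decomposition $U = X \oplus Y$, and then apply \cref{l:signreduc} to the normal subgroup $Y$.

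First, I would verify that $B_{|X}$ and $B_{|Y}$ are non-degenerate, so that $\sign(Q_X/B_{|X})$ and $\sign(Q_Y/B_{|Y})$ are defined. From $B(Y,X)=0$ we have $Y \subseteq \leftidx{^B}{X}$, and since $|Y|=|U|/|X|=|\leftidx{^B}{X}|$ by \cref{l:perp_basic}, we actually get $Y = \leftidx{^B}{X}$, and similarly $X = Y^B$. Then for $x\in X$ with $B(x,X)=0$ we get $x \in \leftidx{^B}{X} = Y$, forcing $x = 0$; an analogous argument handles the other side and the form on $Y$.

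Next, let $\alpha \in \Aut_R(U)$ be the unique element with $(Q_X\oplus Q_Y)(u,w) = B(u\alpha,w)$. I claim $Y\alpha\subseteq Y$: for $y\in Y$ and $x\in X$,
\[ B(y\alpha,x) = (Q_X\oplus Q_Y)(y,x) = 0, \]
so $y\alpha \in \leftidx{^B}{X} = Y$. Thus $\alpha$ restricts to a map $d := \alpha_{|Y}\in \Aut_R(Y)$, and, as $U = X\oplus Y$, it induces a map on $U/Y$ that under the natural identification $U/Y \cong X$ becomes some $a \in \Aut_R(X)$, namely $a = \pi_X \circ \alpha_{|X}$, where $\pi_X\colon U \to X$ is the projection. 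Taking $u = y\in Y$, $w = y'\in Y$ shows $B_{|Y}(yd, y') = Q_Y(y, y')$, so $\sign(d) = \sign(Q_Y/B_{|Y})$. Taking $u = x\in X$, $w = x'\in X$ and writing $x\alpha = xa + (x\alpha)_Y$ with $(x\alpha)_Y\in Y$, we have $B((x\alpha)_Y, x') = 0$ by $B(Y,X)=0$, hence $B_{|X}(xa, x') = Q_X(x, x')$, so $\sign(a) = \sign(Q_X/B_{|X})$.

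Finally, since $2$ is invertible in $R$, multiplication by $2$ is a bijection on any finite $R$-module, so $|U|$ is odd; thus \cref{l:signreduc} applies to the $\alpha$-invariant subgroup $Y \leq U$ and yields
\[ \sign(\alpha) = \sign(\alpha_{U/Y}) \cdot \sign(\alpha_Y) = \sign(a)\cdot \sign(d), \]
which is the claimed identity. The one point that requires a little care is the verification that $\alpha$ stabilizes $Y$ — this is exactly what makes the computation reduce to the two diagonal blocks, and it is where the hypothesis $B(Y,X)=0$ is used decisively; the rest consists of unwinding the definitions.
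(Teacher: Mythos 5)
Your proof is correct and follows essentially the same route as the paper: both arguments reduce to \cref{l:signreduc} by exhibiting the transition automorphism $\alpha$ as block triangular with respect to $U = X\oplus Y$, with diagonal blocks realizing $\sign(Q_X/B_{|X})$ and $\sign(Q_Y/B_{|Y})$. The only cosmetic difference is that the paper builds $\alpha$ explicitly (with a correction term $\kappa\colon X\to Y$), whereas you take the defining $\alpha$ and verify $Y\alpha\subseteq Y$; these are the same computation read in opposite directions.
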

Of course, 
$Q_X \oplus Q_Y$ is the form on $X\oplus Y$ defined
by
\[  (Q_X\oplus Q_Y)(x_1+y_1,x_2+y_2)
= Q_X(x_1,x_2) + Q_Y(y_1,y_2)
\]
for $x_1$, $x_2\in X$,
$y_1$, $y_2\in Y$. 
When $Q_X$ and $Q_Y$ are symmetric, then so is
$Q_X \oplus Q_Y$.
\begin{proof}[Proof of \cref{l:disc_dirprod}]
    As $B(Y,X)=0$ and $B$ is non-degenerate, 
    the restrictions
    $B_{|X}$ and $B_{|Y}$ are also non-degenerate.
    Thus there are $\sigma\in \Aut(X)$ and $\tau\in \Aut(Y)$
    with
    \[ Q_X(x_1,x_2) = B(x_1\sigma,x_2)
    \quad \text{and} \quad
    Q_Y(y_1,y_2) = B(y_1\tau,y_2). 
    \]
    Since $B_{|Y}$ is non-degenerate,
    there is, for each $x\in X$,
    an element
    $x\kappa \in Y$ such that $B(x,y)=B(x\kappa,y)$
    for all $y\in Y$.
    The map $\kappa\colon X\to Y$ is an homomorphism.
    Define $\alpha\colon U\to U$ by
    $(x+y)\alpha = x\sigma - x\sigma \kappa + y\tau$.
    Then
    \begin{align*}
    B((x_1+y_1)\alpha, x_2+y_2)
    &= B( x_1\sigma - x_1\sigma\kappa + y_1\tau, x_2 + y_2 )
    \\ &= B( x_1\sigma ,x_2 ) + B( x_1\sigma, y_2 )             
    - B( x_1\sigma\kappa, y_2 )
    + B( y_1\tau, y_2 )
    \\ &
    = Q_X(x_1,x_2) + Q_Y(y_1,y_2)
    \\ & = (Q_X \oplus Q_Y) (x_1+y_1,x_2+y_2).
    \end{align*}
    Thus 
    $\sign((Q_X\oplus Q_Y)/B) = \sign(\alpha)$.
    Now $\alpha_{|Y} = \tau$,
    and $\alpha$ on $U/Y\iso X$ is $\sigma$.
    The result follows from \cref{l:signreduc}.
\end{proof}

\section{Quadratic Gauss sums on abelian groups}
\label{sec:gausssums}
As in \cref{basicsetup},
let $R$ be a finite commutative ring of odd order
(equivalently, $2$ is invertible in $R$), and let
$\lambda\colon R\to \compl^*$ be a linear character
such that $\Ker \lambda$ contains no nonzero ideal of $R$.
Let $X$ be a finite $R$-module, 
and let $q\colon X\times X\to R$ be a 
symmetric, non-degenerate bilinear form.
We define the (generalized quadratic) \emph{Gauss sum} 
$\gamma_{\lambda}(q)$
corresponding to $\lambda$ and $(X,q)$ as
  \[ \gamma_{\lambda}(q) := 
     \gamma_{\lambda}(X,q) := \frac{1}{ \sqrt{ \abs{X} } }
                     \sum_{x\in X} \lambda(\tfrac{1}{2} q(x,x)).
  \]
(The factor $\tfrac{1}{2}$ is there to obtain consistency
with the \emph{Weil index} over finite fields 
\cite{thomas08,Weil64}.
Otherwise, this factor is not really important.)

By \cref{l:perp_basic},
the form
$\lambda\circ q\colon X\times X \to \compl^*$ 
is also non-degenerate,
and obviously,  
$\gamma_{\lambda}(q)$ depends only on $\lambda\circ q$.

Gauss sums have the following well-known properties:
\begin{lemma}\label{l:gauss_elem}
\leavevmode
   \begin{enumerate}
   \item \label{it:gaussmult}
        $\gamma_{\lambda}(q_1 \oplus q_2)
         = \gamma_{\lambda}(q_1) \gamma_{\lambda}(q_2)$
         for forms $q_i$ on $X_i$ ($i=1$, $2$)
         and $q_1\oplus q_2$ their direct sum,
         a form on $X_1\oplus X_2$.
   \item \label{it:gaussred}
       When $U\leq X$ is isotropic 
     (that is, $U\subseteq U^q$),
     then $q$ induces a non-degenerate form $\widetilde{q}$
     on $U^q/U$, and
     $\gamma_{\lambda}(q) = \gamma_{\lambda}(\widetilde{q})$.
   \item \label{it:lagrgauss}
         When $(X,q)$ contains a Lagrangian submodule $L$
         (that is, $L^{q}=L$), 
         then $\gamma_{\lambda}(q) = 1$.
   \item \label{it:absgauss} 
         $\abs{\gamma_{\lambda}(q)} = 1$.
   \end{enumerate}
\end{lemma}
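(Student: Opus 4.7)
My plan is to prove (a) and (c) as easy consequences of (b), then attack (b) and (d) as the two substantive parts.

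For (a), unpacking the definition gives
\[
\gamma_{\lambda}(q_1\oplus q_2)
= \frac{1}{\sqrt{\abs{X_1}\abs{X_2}}}
  \sum_{(x_1,x_2)} \lambda\bigl(\tfrac{1}{2}q_1(x_1,x_1)\bigr)\lambda\bigl(\tfrac{1}{2}q_2(x_2,x_2)\bigr),
\]
which factors. For (c), once (b) is established take $U=L$; then $L^q/L=0$, and $\gamma_{\lambda}$ of the zero form on the trivial module equals $1$.

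For (b), the driving identity is
\[
q(y+u,y+u) = q(y,y) + 2q(y,u) + q(u,u) = q(y,y) + 2q(y,u)
\]
for $y\in X$ and $u\in U$, using symmetry of $q$ and the isotropy $q(u,u)=0$. Partitioning $X$ into $U$-cosets,
\[
\sum_{x\in X}\lambda\bigl(\tfrac{1}{2}q(x,x)\bigr)
= \sum_{y+U\in X/U} \lambda\bigl(\tfrac{1}{2}q(y,y)\bigr) \sum_{u\in U}\lambda\bigl(q(y,u)\bigr).
\]
The inner sum is $\abs{U}$ when $y\in U^q$ and $0$ otherwise: $u\mapsto q(y,u)$ has image an $R$-ideal of $R$, which lies in $\Ker\lambda$ iff it is zero (primitivity), i.e.\ iff $y\in U^q$. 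For $y\in U^q$ the value $q(y,y)$ depends only on $y+U$ (by the same expansion applied to $u'\in U$, using both $q(y,u')=0$ and $q(u',u')=0$), so $\widetilde{q}$ is well defined; its non-degeneracy is immediate from $q$ being non-degenerate on $X$ (the radical of $\widetilde{q}$ lifts to $U^q\cap (U^q)^q=U$). Using $\abs{U^q}=\abs{X}/\abs{U}$ from \cref{l:perp_basic}, so $\abs{U^q/U}=\abs{X}/\abs{U}^2$, the prefactors collapse and give $\gamma_{\lambda}(q)=\gamma_{\lambda}(\widetilde{q})$.

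For (d), I would compute
\[
\abs{\gamma_{\lambda}(q)}^{2}
= \frac{1}{\abs{X}} \sum_{x,y\in X} \lambda\bigl(\tfrac{1}{2}(q(x,x)-q(y,y))\bigr),
\]
substitute $y=x-z$, and use
$\tfrac{1}{2}(q(x,x)-q(x-z,x-z)) = q(x,z)-\tfrac{1}{2}q(z,z)$,
yielding
\[
\abs{\gamma_{\lambda}(q)}^{2}
= \frac{1}{\abs{X}} \sum_{z\in X}\lambda\bigl(-\tfrac{1}{2}q(z,z)\bigr)\sum_{x\in X}\lambda\bigl(q(x,z)\bigr).
\]
By \cref{l:perp_basic}\ref{it:perp_via_mu} applied to $B=q$, the inner sum is nontrivial only when $z\in X^{q}=\{0\}$, where it equals $\abs{X}$. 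Hence only $z=0$ survives and the expression equals $1$. The main technical point throughout is the repeated use of primitivity of $\lambda$ via \cref{l:perp_basic} to force character sums to vanish; once that reflex is in hand, everything is routine bookkeeping of coset sizes and quadratic expansions.
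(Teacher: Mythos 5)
Your proof is correct. Parts (a), (b), (c) follow essentially the same route as the paper: (a) is the routine factorization, (c) is the special case $U=L$ of (b), and for (b) you use the same quadratic expansion plus the vanishing of $\sum_{u\in U}\lambda(q(y,u))$ for $y\notin U^q$ via primitivity of $\lambda$; your single coset decomposition $X/U$ is a mild streamlining of the paper's two-level decomposition of $X$ into cosets of $U^q$ and of $U^q$ into cosets of $U$, and you are more explicit than the paper about why $\widetilde{q}$ is well defined and non-degenerate (the radical lifts into $(U^q)^q=U$), which is a small plus. The genuine divergence is in (d): the paper writes $\abs{\gamma_{\lambda}(q)}^2=\gamma_{\lambda}(q)\gamma_{\lambda}(-q)=\gamma_{\lambda}(q\oplus(-q))$ and observes that the diagonal $\menge{(x,x)}{x\in X}$ is Lagrangian for $q\oplus(-q)$, so (d) drops out of (a) and (c); you instead expand $\abs{\gamma_{\lambda}(q)}^2$ as a double sum, substitute $y=x-z$, and kill everything but $z=0$ by orthogonality and non-degeneracy. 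Both are valid; the paper's version recycles the already-proved Lagrangian case and is arguably more conceptual, while yours is self-contained and makes the reliance on non-degeneracy of $q$ (via $X^q=\{0\}$) completely explicit.
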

\begin{proof}
   Statement~\ref{it:gaussmult} is a routine computation.
   In the situation of~\ref{it:gaussred},
   we have $(U^q)^q=U$,
   so $\widetilde{q}(s_1+U,s_2+U):= q(s_1,s_2)$
   (where $s_1$, $s_2\in U^q$)
   is well-defined and non-degenerate.
   Write
   \[   X = \bigdcup_{t\in T } (t+ U^q)
      \quad\text{and}\quad
        U^q = \bigdcup_{s\in S} (s+U).
   \]
   We assume that $0\in T$. 
   For $t\in T$, $s\in S$ and $u\in U$,
   \[q(t+s+u,t+s+u) = q(t,t) + q(s,s) + 2q(t,s) + 2q(t,u) ,
   \]
   as $q(u,u)=q(s,u)=0$. 
   Thus
   \begin{align*}
      \gamma_{\lambda}(q) 
        &= \frac{1}{\sqrt{\abs{X}}} 
           \sum_{t\in T} \sum_{s\in S} \sum_{u\in U} 
             \lambda \big( \tfrac{1}{2}q(t,t) + \tfrac{1}{2}q(s,s)
                           + q(t,s) +q(t,u)
                    \big)
     \\ &= \frac{1}{\sqrt{\abs{X}}} 
           \sum_{t\in T} 
             \lambda \big( \tfrac{1}{2}q(t,t) \big)
           \sum_{s\in S} 
             \lambda \big( \tfrac{1}{2}q(s,s) \big) \, 
             \lambda \big( q(t,s) \big) \, 
           \sum_{u\in U} 
             \lambda \big( q(t,u) \big)         
     \\ &= \frac{\abs{U}}{\sqrt{\abs{X}}} 
           \sum_{s\in S} 
            \lambda \big( \tfrac{1}{2}q(s,s) \big) 
         = \gamma_{\lambda}(\widetilde{q})\,.
   \end{align*}
   Here the third equality follows from
   \( \sum_{u\in U} \lambda(q(t,u)) = 0 
   \)
   unless $t \in U^{q}$,
   in which case the sum is $\abs{U}$,
   and $t=0$ by our assumption.   
   The last equality follows from 
   $\abs{U}\abs{U^q}=\abs{X}$ (\cref{l:perp_basic}).
   This shows \ref{it:gaussred}, and \ref{it:lagrgauss}
   is a special case.
   
   We have
   \[ \abs{\gamma_{\lambda}(q)}^2 = 
      \gamma_{\lambda}(q) \cconj{\gamma_{\lambda}(q)}
      = \gamma_{\lambda}(q) \gamma_{\lambda}(-q)
      = \gamma_{\lambda}(q \oplus(-q)).
   \]
   But $(X\oplus X,q\oplus(-q))$ has a Lagrangian submodule,
   namely 
   $L= \menge{(x,x)}{ x\in X }$.
   Thus \ref{it:absgauss} follows from \ref{it:lagrgauss}.
\end{proof}

We note in passing that \ref{it:gaussmult} and \ref{it:gaussred}
reduce the computation of $\gamma_{\lambda}$ to the case
where $(X,q)$ is anisotropic and indecomposable.
In this case, $F:=R/\ann_R(X)$ is a field and $\dim_F (X) \leq 1$.

Let $q\colon X\times X\to R$ be symmetric and non-degenerate
as before. 
Following I.~Schur~\cite{Schur21}, cf.~\cite{MurtyPathak17},
we consider the $X\times X$-matrix
\[ F_{\lambda}(q) = 
   \frac{1}{\sqrt{\abs{X}}} 
   \big( \lambda( \tfrac{1}{2} q(x,y)
                ) 
   \big)_{x,y\in X}.
\]
Obviously, $\tr(F_{\lambda}(q)) = \gamma_{\lambda}(q)$.
Our proofs of the next results are  straightforward generalizations 
of Schur's arguments. 

\begin{prop}\hfill 
\label{p:gaussiansigns}
  \begin{enumerate}
  \item \label{it:gausssquare}
        $\displaystyle 
         \gamma_{\lambda}(q)^2 = (-1)^{ \left( \frac{\abs{X}-1}{2}
                                \right)
                              }
                       = \Jacobi*{-1}{\abs{X}}$.
  \item \label{it:schur21}
        $\displaystyle 
         \gamma_{\lambda}(q) = (-1)^{ \left( \frac{\abs{X}^2 - 1}{8}
                              \right) 
                            }  \det(F_{\lambda}(q))
                     = \Jacobi*{2}{\abs{X}} \det(F_{\lambda}(q))$.
  \end{enumerate}
  (Again, $\Jacobi{\cdot}{n}$ denotes the Jacobi symbol.)  
\end{prop}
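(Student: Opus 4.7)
My plan is to follow Schur's classical strategy. The starting point is the matrix identity $F_{\lambda}(q)^2 = P_{\iota}$, where $P_{\iota}$ denotes the permutation matrix on $\compl^X$ corresponding to negation $\iota\colon x \mapsto -x$ on $X$. This is verified by a direct computation: the $(x,z)$-entry of $F_{\lambda}(q)^2$ equals
\[ \frac{1}{\abs{X}} \sum_{y\in X}
      \lambda\bigl(\tfrac{1}{2}(q(x,y)+q(y,z))\bigr)
   = \frac{1}{\abs{X}} \sum_{y\in X}
      \lambda\bigl(\tfrac{1}{2} q(y,x+z)\bigr)
\]
by symmetry of $q$, and by \cref{l:perp_basic}~\ref{it:perp_via_mu} the character $y \mapsto \lambda(\tfrac{1}{2}q(y,w))$ of $X$ is trivial precisely when $w=0$, so the sum equals $\abs{X}\,\delta_{x+z,0}$. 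Consequently $F_{\lambda}(q)^4 = I$ and the eigenvalues of $F_{\lambda}(q)$ lie in $\{1,-1,i,-i\}$.

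Next I analyze the eigenvalue multiplicities $N_1, N_{-1}, N_i, N_{-i}$ of $F_\lambda(q)$. Since $\abs{X}$ is odd, only $0\in X$ is $\iota$-fixed, so the $+1$-eigenspace of $P_\iota$ (even functions) has dimension $(\abs{X}+1)/2$ and the $-1$-eigenspace (odd functions) has dimension $(\abs{X}-1)/2$. Because $F_\lambda(q)^2$ acts as $+1$ on the former and $-1$ on the latter, this yields $N_1 + N_{-1} = (\abs{X}+1)/2$ and $N_i + N_{-i} = (\abs{X}-1)/2$. From $\gamma_\lambda(q) = \tr F_\lambda(q) = (N_1-N_{-1}) + i(N_i-N_{-i})$ together with $\abs{\gamma_\lambda(q)} = 1$ from \cref{l:gauss_elem}~\ref{it:absgauss}, and integrality of the multiplicities, exactly one of the integer differences $N_1-N_{-1}$, $N_i-N_{-i}$ equals $\pm 1$ while the other is $0$. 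Matching the parities of $(\abs{X}\pm 1)/2$ then forces the dichotomy: if $\abs{X}\equiv 1 \pmod 4$ then $N_i = N_{-i}$ and $\gamma_\lambda(q)=\pm 1$, whereas if $\abs{X}\equiv 3 \pmod 4$ then $N_1 = N_{-1}$ and $\gamma_\lambda(q)=\pm i$. In both situations $\gamma_\lambda(q)^2 = (-1)^{(\abs{X}-1)/2}$, establishing~\ref{it:gausssquare}.

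For~\ref{it:schur21}, a direct computation from the multiplicities gives $\det F_\lambda(q) = (-1)^{N_{-1} + N_{-i}} \, i^{(\abs{X}-1)/2}$. In each case above, the remaining free multiplicity ($N_{-1}$ when $\abs{X}\equiv 1 \pmod 4$, or $N_{-i}$ when $\abs{X}\equiv 3 \pmod 4$) is determined by the sign of $\gamma_\lambda(q)$ and by $\abs{X}$. A short case-by-case check modulo $8$ then verifies the identity $\gamma_\lambda(q)/\det F_\lambda(q) = \Jacobi*{2}{\abs{X}}$. The crucial observation making this check succeed is that the two sign subcases in each case (e.g.\ $\gamma_\lambda(q)=+1$ versus $-1$) correspond to the free multiplicity differing by exactly $1$, so the change in $\det F_\lambda(q)$ precisely cancels the change in $\gamma_\lambda(q)$, and the ratio $\gamma_\lambda(q)/\det F_\lambda(q)$ depends only on $\abs{X}$.

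The main obstacle is the bookkeeping in the final step: tracking signs across the four sign subcases for $\gamma_\lambda(q)$ and the four residue classes $\abs{X}\pmod 8$. Everything else---the identity $F_\lambda(q)^2 = P_\iota$, the eigenvalue constraints, and the appeal to $\abs{\gamma_\lambda(q)}=1$---is routine given the preceding lemmas.
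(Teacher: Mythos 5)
Your proposal is correct and follows essentially the same route as the paper: both establish $F_{\lambda}(q)^2 = P_{\iota}$ by the same orthogonality computation, deduce $F_{\lambda}(q)^4=I$, constrain the eigenvalue multiplicities (your even/odd eigenspace count of $P_\iota$ is equivalent to the paper's trace equations for $F^0$ and $F^2$), use $\abs{\gamma_\lambda(q)}=1$ plus Gaussian integrality to pin down $\gamma_\lambda(q)\in\{\pm1,\pm i\}$, and finish part~\ref{it:schur21} by the same residue-class check modulo $8$. Your key observation that flipping the sign of $\gamma_\lambda(q)$ shifts the free multiplicity by exactly $1$, so that $\gamma_\lambda(q)/\det F_\lambda(q)$ depends only on $\abs{X}$, is exactly what makes the paper's case analysis work as well.
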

\begin{proof}
    Write $F:= F_{\lambda}(q)$ and $T=F^2$.
    Then the entry $t_{x,y}$ of $T$ is 
    \begin{align*}
      t_{x,y} &= \frac{1}{\abs{X}} \sum_{u\in X} 
                  \lambda(\tfrac{1}{2}q(x,u)) 
                  \lambda(\tfrac{1}{2}q(u,y))
        \\    &= \frac{1}{\abs{X}} \sum_{u \in X} 
                   \lambda( \tfrac{1}{2}q(x+y,u) )
        = \delta_{x+y,0}.
    \end{align*}   
   Since $\abs{X}$ is odd, we see that we can arrange the elements
   of $X$ such that $F^2$ has the form
   \[ F^2 = \begin{pmatrix}
              1 & 0 & 0 \\
              0 & 0 & I \\
              0 & I & 0
            \end{pmatrix}.
   \]            
   It follows that $F^4 = I$.
   
   It follows that the eigenvalues of $F$
   are from the set $\{ \pm 1, \pm i\}$.
   Let $m_k$ be the multiplicity of the eigenvalue~$i^k$
   ($k=0$, $1$, $2$, $3$).   
   Thus
   \[ G:= \gamma_{\lambda}(q) = (m_0-m_2) + (m_1-m_3)i
      \quad\text{and}\quad
      \det(F) = i^{2m_2 + m_1 - m_3}.
   \]
   From $\abs{G}=1$ we conclude that
   $G\in \{\pm 1, \pm i \}$.
   By looking at the traces of $F^k$ ($k=0$, $1$, $2$, $3$),
   we get the four equalities
   \begin{align*}
     \sum m_k &= \abs{X}, & \sum m_k i^k &= G, \\
     \sum m_k (-1)^k &= 1, & \sum m_k i^{-k} &= \cconj{G}.
   \end{align*}
   (Recall that we have computed $F^2$ above.)
   Thus
   \[ 4m_0 = \abs{X} + 1 + G + \cconj{G}
      \quad\text{and}\quad
      4m_2 = \abs{X} + 1 - (G+\cconj{G}).
   \]
   Since the right hand sides must be divisible by $4$
   and $G\in \{ \pm 1, \pm i \}$, it follows that
   $G\in \{\pm 1\} $ when $\abs{X}\equiv 1 \mod 4$
   and $G\in \{ \pm i \} $ when
   $\abs{X} \equiv -1 \mod 4$.
   This yields \ref{it:gausssquare}.

   To see \ref{it:schur21},
   assume first that $\abs{X} \equiv 1 \mod 4$,
   so $G= \pm 1$.
   Then we have 
   \[ m_2 = \frac{\abs{X}+1- 2 G}{4}
      \quad\text{and}\quad
      \det(F) = (-1)^{m_2}.
   \]
   It follows that 
   $G = \det(F)$ when $\abs{X}\equiv 1 \mod 8$,
   and $G=-\det(F)$ when $\abs{X} \equiv 5 \mod 8$.
   Thus \ref{it:schur21} holds for $\abs{X}\equiv 1 \mod 4$.
   
   Now assume $\abs{X}\equiv -1 \mod 4$,
   so $G = (m_1-m_3)i = \pm i$. 
   In this case we have
   \[ \det(F) = (-1)^{m_2} i^{m_1-m_3}
      =(-1)^{m_2} (m_1-m_3)i
      =(-1)^{m_2} G.
   \]
   Since now
   \[ m_2 = \frac{\abs{X}+1}{4},
   \]
   we get that $m_2$ is even when $\abs{X}\equiv -1 \mod 8$,
   and $m_2$ is odd when $\abs{X} \equiv 3 \mod 8$,
   and \ref{it:schur21} holds also in this case.
\end{proof}

\begin{cor}\label{c:gauss_ae}
  Suppose the symmetric, non-degenerate forms 
  $q_1$, $q_2\colon X\times X \to R$ are related by
  $q_2(x,w) = q_1(x\sigma, w)$, where $\sigma\colon X\to X$.
  Then $\gamma_{\lambda}(q_2) = \sign(\sigma) \gamma_{\lambda}(q_1)$.
\end{cor}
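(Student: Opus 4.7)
The plan is to reduce the claim to the matrix identity supplied by Proposition~\ref{p:gaussiansigns}\ref{it:schur21} and to recognize the relation $q_2(x,y) = q_1(x\sigma, y)$ as left multiplication of $F_\lambda(q_1)$ by a permutation matrix. First I would verify that $\sigma$ is a bijection of $X$: if $x\sigma = 0$ for some $x \in X$, then $q_2(x,w) = q_1(0,w) = 0$ for all $w$, and non-degeneracy of $q_2$ forces $x=0$. Since $X$ is finite, $\sigma$ is a permutation of $X$ in the set-theoretic sense, so $\sign(\sigma)$ is defined.

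Next I would rewrite the Gauss-sum matrices. By definition, the $(x,y)$-entry of $F_\lambda(q_2)$ is
\[
   \frac{1}{\sqrt{\abs{X}}}\, \lambda\bigl( \tfrac{1}{2} q_2(x,y) \bigr)
     = \frac{1}{\sqrt{\abs{X}}}\, \lambda\bigl( \tfrac{1}{2} q_1(x\sigma, y) \bigr),
\]
which is exactly the $(x\sigma,y)$-entry of $F_\lambda(q_1)$. Hence $F_\lambda(q_2) = P_\sigma F_\lambda(q_1)$, where $P_\sigma$ is the $X \times X$ permutation matrix associated with $\sigma$. Taking determinants and using $\det P_\sigma = \sign(\sigma)$ gives
\[
  \det F_\lambda(q_2) = \sign(\sigma) \cdot \det F_\lambda(q_1).
\]

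Finally, Proposition~\ref{p:gaussiansigns}\ref{it:schur21} yields $\gamma_\lambda(q_i) = \Jacobi*{2}{\abs{X}} \det F_\lambda(q_i)$ for $i=1,2$. Multiplying through by the common Jacobi-symbol factor produces the desired equality $\gamma_\lambda(q_2) = \sign(\sigma)\, \gamma_\lambda(q_1)$. The argument is essentially formal; the only mild subtlety is the verification that $\sigma$ is indeed a permutation, and even that is immediate from non-degeneracy together with the finiteness of $X$, so I do not anticipate any real obstacle.
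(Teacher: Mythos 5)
Your proof is correct and follows essentially the same route as the paper: identify $F_{\lambda}(q_2)$ as $P_\sigma F_{\lambda}(q_1)$ for the permutation matrix $P_\sigma$, take determinants, and conclude via Proposition~\ref{p:gaussiansigns}\ref{it:schur21} since the factor relating $\gamma_{\lambda}(q_i)$ to $\det F_{\lambda}(q_i)$ depends only on $\abs{X}$. Your explicit check that $\sigma$ is a bijection is a small addition the paper leaves implicit.
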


In the case where $X=(\ints/m\ints)^d$,
this result
dates back to H. Weber~\cite{Weber1872}
(cf. C.~Jordan~\cite{Jordan1871}).
Our proof is essentially the same as
Schur's proof~\cite{Schur21}.

\begin{proof}
  Notice that $\sigma$ is necessarily invertible and thus
  induces a permutation of $X$.
  We have $F_{\lambda}(q_2)= P(\sigma) F_{\lambda}(q_1)$,
  where $P(\sigma)$ is the permutation matrix corresponding
  to $\sigma$.
  Thus $\det(F_{\lambda}(q_2)) = \sign(\sigma)\det(F_{\lambda}(q_1))$.
  By \cref{p:gaussiansigns}\ref{it:schur21}, 
  the quotient $\gamma_{\lambda}(q_i)/\det(F_{\lambda}(q_i))$
  depends only on $\abs{X}$, but not on the form~$q_i$ itself.
  The result follows.
\end{proof}

\begin{remark}
    We should mention here that the matrix $F_{\lambda}(q)$ 
    can be interpreted as the image of a certain $g\in \Sp(V)$
    under an explicit matrix version of the Weil representation
    (up to a scalar).
    Namely, let $V = X\oplus X$ with symplectic form
    $\omega( (x,y), (z,w) ) = \tfrac{1}{2}(q(x,w)-q(y,z))$,
    and let $g\in \Sp(V)$ be 
    defined by $(x,y)g=(-y,x)$.
    
    Let $L= X\oplus 0$ and $T = 0 \oplus X$
    (Lagrangian submodules).
    In the proof of \cref{p:sympl_alg_matring}, we
    constructed an explicit isomorphism between 
    the symplectic algebra~$\cA$
    and $\mat_{X}(\crp{K})$ associated to $L$ and $T$.    
    By a (tedious) calculation,
    one can show that under this isomorphism,
    $P(g)$ corresponds to 
    $\abs{X}\gamma_{\lambda}(-q) F_{\lambda}(q)$,
    where $P(g)$ is as in \cref{p:weilconcret}.
    
    It is also well known that for
    $X=R=\ints/m$ and $q(a,b)=ab$, 
    the matrix $F_{\lambda}(q)$ encodes the discrete 
    Fourier transform~\cite{AuslanderTolimieri79,GurevichHadani09}.
    (Usually, it is defined without the factor~$\tfrac{1}{2}$,
    so that the case of even $m$ is also covered.)
\end{remark}

\section{Factorization of a symplectic automorphism}
\label{sec:factorization}

Assume \cref{basicsetup}.
(The results in this section can be extended 
to more general situations, 
but to keep the notation simple,
we do not assume this greater generality.)
Recall that in \cref{l:sigmaform}, we defined a 
non-degenerate bilinear form 
$B_g\colon V(1-g)\times V(1-g) \to R$
for any $g\in \Sp(V)$, and that this form 
has the property
\begin{equation*}
B_g( x, y ) - B_g( y, x ) = \omega( x, y )
\quad\text{for all } x,y\in V(1-g).
\end{equation*}

\begin{prop}\cite[Theorem~1.1.1--2]{Wall63} \cite[Theorem~3.5]{Ward17}
    \label{p:spparam}
    Let $X$ be a submodule of $V$ and    
    $B\colon X\times X\to R$ 
    a non-degenerate bilinear form  with 
    \begin{equation}
    \label{equ:B_alt} 
    B( x, y ) - B( y, x ) = \omega( x, y )
    \quad\text{for all } x,y\in X.
    \end{equation}
    Then there exists a unique $g\in \Sp(V)$ such that
    $X=V(1-g)$ and $B = B_g$.
\end{prop}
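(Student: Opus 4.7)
The plan is to recover $g$ from $B$ by inverting the defining relation $B_g(v(1-g),y)=\omega(v,y)$ for $y\in X$, and to use this inversion to build a well-defined $R$-linear endomorphism of $V$.

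First, I would construct a map $\alpha\colon V\to X$ as follows. For each $v\in V$, the assignment $y\mapsto \omega(v,y)$ defines an element of $\Hom_R(X,R)$. Since $B$ is non-degenerate on $X\times X$, \cref{l:perp_basic} gives an isomorphism $X\to \Hom_R(X,R)$, $x\mapsto B(x,\cdot)$. I would let $v\alpha$ denote the unique $x\in X$ such that $B(x,y)=\omega(v,y)$ for all $y\in X$. Then $\alpha$ is $R$-linear, and I would set $g=1_V-\alpha$, so that by construction $V(1-g)=V\alpha\subseteq X$ and $B(v\alpha,y)=\omega(v,y)$ for all $v\in V$, $y\in X$.

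Next I would verify the four nontrivial properties: \emph{(i) $V\alpha=X$}: the kernel of $v\mapsto \omega(v,\cdot)|_X$ is $X^\perp$, so $V/X^\perp$ embeds into $\Hom_R(X,R)\iso X$; by \cref{l:perp_basic} applied to $\omega$, $\abs{V/X^\perp}=\abs{X}$, so the embedding is onto, which means $\alpha$ is surjective onto $X$. \emph{(ii) $g$ is symplectic}: expanding $\omega(vg,wg)=\omega(v-v\alpha,w-w\alpha)$ and using $\omega(v,w\alpha)=B(v\alpha,w\alpha)$, $\omega(v\alpha,w)=-B(w\alpha,v\alpha)$, together with $\omega(v\alpha,w\alpha)=B(v\alpha,w\alpha)-B(w\alpha,v\alpha)$ (from \eqref{equ:B_alt}), all cross-terms cancel and $\omega(vg,wg)=\omega(v,w)$. \emph{(iii) $g$ is bijective}: since $V$ is finite, it suffices to check that $vg=0$ implies $v=0$; if $v=v\alpha\in X$, then $B(v,y)=\omega(v,y)=B(v,y)-B(y,v)$ for every $y\in X$, so $B(y,v)=0$ and non-degeneracy forces $v=0$. \emph{(iv) $B_g=B$}: for $x=v\alpha$, $y=w\alpha$, the formula $B_g(x,y)=\omega(v,y)=B(v\alpha,y)=B(x,y)$ is immediate.

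For uniqueness, suppose some $g'\in \Sp(V)$ satisfies $V(1-g')=X$ and $B_{g'}=B$. Then for all $v\in V$ and all $y\in X$ one has $\omega(v,y)=B_{g'}(v(1-g'),y)=B(v(1-g'),y)$, which by the non-degeneracy of $B$ on $X\times X$ forces $v(1-g')=v\alpha$, hence $g'=g$. The main (though minor) obstacle will be the bookkeeping in the symplectic verification (ii), where the antisymmetry relation \eqref{equ:B_alt} must be combined with the defining relation of $\alpha$ so that all four cross-terms collapse; every other step is essentially a direct application of \cref{l:perp_basic} and linearity.
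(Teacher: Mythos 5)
Your proposal is correct and follows essentially the same route as the paper: define $\alpha\colon V\to X$ via the isomorphism $X\to\Hom_R(X,R)$ induced by $B$ so that $B(v\alpha,y)=\omega(v,y)$, set $g=1_V-\alpha$, and verify form preservation by the same cancellation of cross-terms using \eqref{equ:B_alt}. Your extra step (iii) is harmless but redundant, since any endomorphism of a finite module preserving the non-degenerate form $\omega$ is automatically injective, hence bijective.
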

\begin{proof}
    Define $\alpha\colon V\to X$ by 
    requiring $\omega ( v, x ) = B( v\alpha, x )$
    for all $x\in X$ and $v\in V$.
    This is possible since the non-degenerate form
    $B$ induces an isomorphism
    from $X$ to $\Hom_R(X,R)$
    (\cref{l:perp_basic}). 
    Then $\alpha$ is $R$-linear with
    $\Ker \alpha = X^{\perp}$ and $V\alpha \subseteq X$,
    so 
    $V\alpha=X$. 
    
    Set $g= 1_V -\alpha$. 
    This is the unique map $g\colon V\to V$ with
    $\omega(v,x)=B(v(1-g),x)$ for all $v\in V$ and $x\in X$.
    It remains to show that $g$ preserves the form $\omega$:
    \begin{align*}
    \omega( vg, wg )
    &= \omega( v, w ) - \omega( v, w\alpha )
    - \omega( v\alpha, w )
    + \omega( v\alpha, w\alpha )
    \\ &= \omega( v, w ) - B( v\alpha, w\alpha )
    + B( w\alpha, v\alpha )
    + \omega( v\alpha, w\alpha )
    \\ &= \omega( v, w ),
    \end{align*}
    where the last equality follows from \eqref{equ:B_alt}.
    Thus $g\in \Sp(V)$ as claimed.
\end{proof}

\begin{lemma}
    Let $h$, $k\in \Sp(V)$ and assume that
    $V(1-h)\cap V(1-k)=\{0\}$.
    Then
    \[ V(1-hk) = V(1-h) \oplus V(1-k)
    \quad\text{and} \quad
    B_{hk}(V(1-k),V(1-h)) = 0.
    \]
\end{lemma}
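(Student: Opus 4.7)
The starting point is the elementary identity
\[ v(1-hk) = v(1-h) + vh(1-k) \quad\text{for all } v\in V, \]
which yields $V(1-hk) \subseteq V(1-h) + V(1-k)$. Since $V(1-h) \cap V(1-k) = \{0\}$ by hypothesis, this sum is direct. For the reverse inclusion $V(1-h) \oplus V(1-k) \subseteq V(1-hk)$, I plan to pass to orthogonals via \cref{l:ker_senkr} and \cref{l:perp_basic}. The inclusion is equivalent (using $X^{\perp\perp}=X$ and that $(V(1-h)+V(1-k))^\perp = V(1-h)^\perp \cap V(1-k)^\perp = C_V(h) \cap C_V(k)$) to the equality $C_V(hk) = C_V(h) \cap C_V(k)$. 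The nontrivial inclusion is shown directly using the hypothesis: if $vhk = v$, then $v(1-h) = vhk - vh = -vh(1-k)$, so $v(1-h) \in V(1-h) \cap V(1-k) = \{0\}$, forcing $vh = v$ and hence $vk = v$. Taking orthogonals then gives $V(1-hk) = V(1-h) \oplus V(1-k)$.

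For the second assertion $B_{hk}(V(1-k), V(1-h)) = 0$, the key idea is to select a suitable preimage of $y \in V(1-k)$ under $v \mapsto v(1-hk)$. Picking any $a \in V$ with $a(1-hk) = y$, the identity above produces the direct sum decomposition $y = a(1-h) + ah(1-k)$ in $V(1-h) \oplus V(1-k)$. Since $y$ already lies in the $V(1-k)$-component, uniqueness forces $a(1-h) = 0$, i.e.\ $ah = a$. Then for $x = v(1-h) \in V(1-h)$, the definition of $B_{hk}$ gives
\[ B_{hk}(y, x) = \omega(a, x) = \omega(a, v) - \omega(a, vh), \]
and since $h \in \Sp(V)$ and $ah = a$, we have $\omega(a, vh) = \omega(ah^{-1}, v) = \omega(a, v)$, so the difference vanishes.

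I do not anticipate any real obstacle. The one mildly delicate point is recognizing that once the direct sum decomposition $V(1-hk) = V(1-h) \oplus V(1-k)$ is established, any preimage $a$ of a vector $y \in V(1-k)$ under $1-hk$ is automatically fixed by $h$; this is precisely what makes the final computation of $B_{hk}(y,x)$ collapse. The argument is otherwise a clean combination of the orthogonality tools in \cref{l:perp_basic} and \cref{l:ker_senkr} with the basic identity for $1-hk$.
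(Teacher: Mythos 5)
Your proof is correct, and it runs the key duality in the opposite direction from the paper. The paper perps the hypothesis to get $V=\C_V(h)+\C_V(k)$ and then exhibits explicit preimages: every element of $V(1-k)$ equals $c(1-k)=c(1-hk)$ for some $c\in\C_V(h)$, which simultaneously yields $V(1-k)\subseteq V(1-hk)$ and, since $\C_V(h)=V(1-h)^{\perp}$, the vanishing of $B_{hk}(V(1-k),V(1-h))$ in one line. You instead prove the centralizer identity $\C_V(hk)=\C_V(h)\cap\C_V(k)$ by a direct elementwise computation and perp the \emph{conclusion} via \cref{l:perp_basic} and \cref{l:ker_senkr}; then, for the form, you show that \emph{every} preimage $a$ of $y\in V(1-k)$ under $1-hk$ satisfies $ah=a$ (using uniqueness in the already-established direct sum) and check $\omega(a,v(1-h))=0$ by hand. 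The two arguments are dual: the paper's statement that $\C_V(h)$ surjects onto $V(1-k)$ under $1-hk$ corresponds to your observation that the full preimage of $V(1-k)$ is contained in $\C_V(h)$. What your route buys is a slightly cleaner first half (no need for the auxiliary decomposition $V=\C_V(h)+\C_V(k)h^{-1}$); the cost is that the form computation must wait until the direct-sum decomposition is in place, and you re-derive the orthogonality $\C_V(h)\perp V(1-h)$ rather than quoting it.
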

\begin{proof}
    As $v(1-hk) = v(1-h) + vh(1-k)$,
    we always have
    $  V(1-hk) \subseteq V(1-h) + V(1-k) $.
    From  $V(1-h)\cap V(1-k)=\{0\}$ it follows that
    \begin{align*}
    V = \{0\}^{\perp} 
    &= (V(1-h)\cap V(1-k) )^{\perp}
    \\ &= (V(1-h))^{\perp} + (V(1-k))^{\perp}  
    &&\text{(\cref{l:perp_basic}\ref{it:perp_sum})}
    \\ &= \C_V(h) + \C_V(k),
    &&\text{(\cref{l:ker_senkr})}
    \end{align*}
    and thus also
    \[ V = Vh^{-1} = \C_V(h) + \C_V(k) h^{-1}.
    \]
    It follows that 
    \begin{align*}
    V(1-h) &= \C_V(k)(1-h) = \C_V(k)h^{-1}(1-h) 
    \\
    \text{and}\quad
    V(1-k) &= \C_V(h)(1-k).
    \end{align*}
    But for $d\in \C_V(k)h^{-1}$, we have
    $ d(1-h) = d - dh = d - dhk = d(1-hk) \in V(1-hk)$,
    which shows $V(1-h) \subseteq V(1-hk)$.
    Similarly,
    $ c ( 1-k ) = c - ck = c - chk = c (1-hk) $ 
    for $c\in C_V(h)$, 
    and so 
    $ V(1-k) \subseteq V(1-hk) $. 
    
    For $x\in V(1-h) $ and $y=c(1-k)\in V(1-k)$
    with $c\in \C_V(h)$
    we have
    \begin{align*}
    B_{hk}( c(1-k), x )
    = B_{hk}( c(1-hk), x )
    = \omega ( c, x ) = 0
    \end{align*}
    as $\C_V(h) \perp V(1-h)$.
\end{proof}

\begin{prop}
\label{p:factorize}
    Let $g\in \Sp(V)$ and suppose that
    $V(1-g) = X \oplus Y$ with
    $B_g(Y,X) = 0$.
    Then there exist $h$, $k\in \Sp(V)$ with
    $X=V(1-h)$, $Y=V(1-k)$
    and $B_h = (B_g)_{|X}$,
    $B_k = (B_g)_{|Y}$.
    For this $h$ and $k$, we have $g=hk$.
\end{prop}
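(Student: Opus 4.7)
The plan is to apply \cref{p:spparam} twice to produce $h$ and $k$ from the restricted forms $(B_g)|_X$ and $(B_g)|_Y$, then identify $hk$ with $g$ via that proposition's uniqueness statement. The preliminary step is to show these restrictions are non-degenerate, as they inherit the alternating identity from $B_g$ automatically. Since $B_g$ is non-degenerate on $V(1-g) = X \oplus Y$, the size relations in \cref{l:perp_basic} combined with the hypothesis $B_g(Y,X) = 0$ (which places $Y \subseteq {}^{B_g}X$ and $X \subseteq Y^{B_g}$) force the equalities $Y = {}^{B_g}X$ and $X = Y^{B_g}$ inside $V(1-g)$. Hence the left radical of $(B_g)|_X$ equals $X \cap {}^{B_g}X = X \cap Y = 0$, and analogously the right radical of $(B_g)|_Y$ equals $Y \cap X = 0$. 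For the two remaining radicals, if $x \in X$ satisfies $B_g(X, x) = 0$ then also $B_g(Y, x) = 0$ (since $x \in X$ and $B_g(Y, X) = 0$), so $x$ lies in the right radical of $B_g$ on $V(1-g)$, forcing $x = 0$; the left radical of $(B_g)|_Y$ is killed in the mirror way.

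Once non-degeneracy is settled, \cref{p:spparam} yields unique $h, k \in \Sp(V)$ with $V(1-h) = X$, $B_h = (B_g)|_X$, $V(1-k) = Y$, and $B_k = (B_g)|_Y$. To conclude $g = hk$ I would again invoke the uniqueness clause of \cref{p:spparam}: it is enough to verify $V(1-hk) = V(1-g)$ and $B_{hk} = B_g$. Since $V(1-h) \cap V(1-k) = X \cap Y = 0$, the preceding lemma supplies $V(1-hk) = X \oplus Y = V(1-g)$ together with $B_{hk}(Y, X) = 0 = B_g(Y, X)$. On the $(X, Y)$-block both forms evaluate to $\omega(x, y)$, forced by the alternating relation $B(x,y) = B(y,x) + \omega(x,y)$ once the $(Y, X)$-block is zero.

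The genuinely interesting step, and what I expect to be the main obstacle, is showing the diagonal blocks agree: $B_{hk}|_X = B_h$ and $B_{hk}|_Y = B_k$. The trick is a single lifting: for $x \in X \subseteq V(1-hk)$, pick any $v \in V$ with $v(1-hk) = x$ and expand $v(1-hk) = v(1-h) + vh(1-k)$. The two summands lie in $X$ and $Y$ respectively, and their sum $x$ is in $X$, so the direct-sum condition $X \cap Y = 0$ forces $vh(1-k) = 0$ and $v(1-h) = x$. Hence the same $v$ computes both $B_{hk}(x, x') = \omega(v, x')$ and $B_h(x, x') = \omega(v, x')$ for $x' \in X$. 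The analogous lift for $y \in Y$ produces $w \in \C_V(h)$ with $w(1-k) = y$, giving $B_{hk}|_Y = B_k$. Assembling all four blocks yields $B_{hk} = B_g$ on $V(1-g)$, and the uniqueness in \cref{p:spparam} delivers $hk = g$, as required.
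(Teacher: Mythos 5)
Your proposal is correct. The first half (producing $h$ and $k$ by applying \cref{p:spparam} to $(B_g)_{|X}$ and $(B_g)_{|Y}$) coincides with the paper's proof, and you actually justify the non-degeneracy of the two restrictions more carefully than the paper does — your four-radical check via $Y={}^{B_g}X$, $X=Y^{B_g}$ is exactly what is needed. Where you diverge is the identification $g=hk$: the paper sets $\alpha=1-h$, $\beta=1-k$ and verifies the operator identity $1-g=\alpha+\beta-\alpha\beta$ directly, by showing $B_g(v(\alpha+\beta-\alpha\beta),z)=B_g(v(1-g),z)$ for $z\in X\cup Y$ and invoking non-degeneracy of $B_g$ on $X\oplus Y$; it never forms $B_{hk}$ and does not use the lemma preceding \cref{p:factorize}. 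You instead characterize $g$ by its data $(V(1-g),B_g)$ and show $hk$ carries the same data: the preceding lemma gives $V(1-hk)=X\oplus Y$ and the vanishing $(Y,X)$-block, the relation $B(x,y)-B(y,x)=\omega(x,y)$ pins down the $(X,Y)$-block, and your lifting argument ($v(1-hk)=v(1-h)+vh(1-k)$ with the direct sum forcing one summand to vanish) handles the diagonal blocks; the uniqueness clause of \cref{p:spparam} then closes the argument. Both routes ultimately rest on the non-degeneracy of $B_g$, but yours buys a cleaner conceptual statement — $hk$ and $g$ realize the same form, hence coincide — at the cost of a four-block case check, while the paper's is a single two-case computation with $\alpha$ and $\beta$.
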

\begin{proof}
    As $B_g(Y,X)=0$, the restrictions
    $(B_g)_{|X}$ and
    $(B_g)_{|Y}$ are nondegenerate.
    By  \cref{p:spparam},
    there exist $h$ and $k\in \Sp(V)$ such that
    $V(1-h)=X$ and $V(1-k)=Y$,
    and
    $\omega ( v, x ) = B_g( v(1-h), x )$
    and $\omega ( v, y ) = B_g( v(1-k), y )$
    for all $x\in X$, $y\in Y$ and $v\in V$.
    
    Let $\alpha = 1-h $ and $\beta = 1-k $.
    We want to show $g=hk$,
    which is equivalent to
    $1-g = \alpha+\beta-\alpha\beta$.
    Let $x\in X$, $v\in V$.
    Using $V\beta =  Y$
    and $B_g(Y,X)= 0$,
    we see that 
    \begin{align*}
    B_g( v ( \alpha + \beta - \alpha\beta ), x )
    = B_g( v\alpha, x ) 
    &= \omega ( v, x ) 
    = B_g( v(1-g), x ) \,.
    \end{align*}
    
    Next, let $y\in Y$, $v\in V$.
    Then
    \begin{align*}
    B_g( v ( \alpha + \beta - \alpha\beta), y )
    &= B_g( v\alpha, y ) 
    + B_g( v ( 1 - \alpha ) \beta, y )
    \\ &= B_g( v\alpha, y ) + \omega ( v ( 1-\alpha ), y ).
    \intertext{By \cref{l:sigmaform}, we have
        $B_g(v\alpha,y) - B_g(y, v\alpha) 
        = \omega ( v\alpha, y )$.
        Together with $B_g(y,v\alpha)\in B_g(Y,X)=0$, we get%
    } 
    B_g(v(\alpha+\beta+\alpha\beta),y)
    &= \omega( v\alpha,y ) 
                         + \omega( v(1-\alpha),y )
    \\ &= \omega ( v, y )
    = B_g( v(1-g), y ).
    \end{align*}
    We have now shown that
    \[ B_g( v(\alpha+\beta-\alpha\beta), z )
    = B_g( v(1-g), z )
    \]
    for all $z\in X \cup Y$ and $v\in V$.
    As $B_g$ is nondegenerate on
    $V(1-g) = X\oplus Y$, it follows that
    $1-g = \alpha+\beta-\alpha\beta$ and thus 
    $g=hk$.
\end{proof}

\section{Proofs of the main theorems}
\label{sec:values}

This section is devoted to the proofs of 
\cref{ti:main} and \cref{ti:inv_value}.
Throughout, we assume \cref{basicsetup}.
We will need the following formulation of 
\cref{c:oddordervalues}
(\cref{ti:valuesodd}):

\begin{cor}
\label{c:oddorderQg}
     When $g\in \Sp(V)$ has odd order, then
    \[ \psi(g) = \sqrt{\abs{\C_V(g)}} \gamma_{\lambda}(-Q_g),
    \]
    where 
    $Q_g(x,y) = (1/2)\big(B_g(x,y)+B_g(y,x) \big) = B_g(x\tfrac{1+g}{2},y)$
    as in \cref{l:symformqg}.
\end{cor}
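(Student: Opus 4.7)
The plan is to deduce this directly from \cref{c:oddordervalues} by a change of variables. First I would check that $-Q_g$ is non-degenerate on $X:=V(1-g)$, so that $\gamma_{\lambda}(-Q_g)$ is defined. By \cref{l:symformqg} the radical of $Q_g$ on $X$ is $\Ker(1+g)$, so it suffices to show $\Ker(1+g)=\{0\}$. If $v(1+g)=0$, then $vg=-v$, and iterating gives $vg^k=(-1)^kv$; taking $k=\ord(g)$, which is odd, yields $v=-v$, so $2v=0$. Since $2$ is invertible in $R$, both $\abs{R}$ and $\abs{V}$ are odd, forcing $v=0$.

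Next I would rewrite the summand in \cref{c:oddordervalues}. For $v\in V$, set $x=v(1-g)\in X$. Using $vg=v-v(1-g)$, $\omega(v,v)=0$, and the definition of $B_g$, one computes
\[\omega(v,vg) = -\omega(v,v(1-g)) = -B_g(x,x) = -Q_g(x,x),\]
where the last equality uses that $B_g$ and $Q_g$ coincide on the diagonal (as $Q_g$ is the symmetric part of $B_g$). The map $v\mapsto v(1-g)$ is a surjection $V\to X$ whose fibers all have size $\abs{\C_V(g)}$, so \cref{c:oddordervalues} becomes
\[\psi(g) = \frac{1}{\sqrt{\abs{V}}} \sum_{v\in V} \lambda\bigl(\tfrac{1}{2}\omega(v,vg)\bigr) = \frac{\abs{\C_V(g)}}{\sqrt{\abs{V}}} \sum_{x\in X} \lambda\bigl(-\tfrac{1}{2}Q_g(x,x)\bigr).\]
Since $\abs{V}=\abs{\C_V(g)}\cdot\abs{X}$, we have $\abs{\C_V(g)}/\sqrt{\abs{V}}=\sqrt{\abs{\C_V(g)}}/\sqrt{\abs{X}}$, so the right-hand side is precisely $\sqrt{\abs{\C_V(g)}}\,\gamma_{\lambda}(-Q_g)$, as claimed.

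There is essentially no obstacle here: once non-degeneracy of $Q_g$ is verified, the argument is pure bookkeeping, and all the substantive content has already been established in \cref{c:oddordervalues}. The only point worth highlighting is the mild coincidence that $B_g$ and its symmetrization $Q_g$ agree on the diagonal, which is exactly what allows the non-symmetric form $B_g$ appearing naturally in the trace computation to be replaced by the symmetric form $Q_g$ needed to invoke the Gauss-sum notation.
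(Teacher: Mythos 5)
Your proof is correct and follows essentially the same route as the paper: the identity $\omega(v,vg)=-\omega(v,v(1-g))=-B_g(x,x)=-Q_g(x,x)$ together with the fiber count for $v\mapsto v(1-g)$ is exactly how the paper rewords \cref{c:oddordervalues}. Your added check that $Q_g$ is non-degenerate (via $\Ker(1+g)=0$ for odd-order $g$) is a sensible piece of diligence that the paper leaves implicit.
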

\begin{proof}
    Since
    $\omega(v,vg) = - \omega(v,v(1-g)) = -B_g(x,x) = -Q_g(x,x)$
    for $x=v(1-g)$,
    this is just a rewording of the formula from
    \cref{c:oddordervalues}.
\end{proof}

\begin{thm}
\label{t:values}
    Assume \cref{basicsetup}, and that $R$ is a principal
    ideal ring.
    Let $g\in \Sp(V)$ and let 
    $B_g\colon V(1-g) \times V(1-g)\to R$ 
    be the form from \cref{l:sigmaform}.
    Then there exists a non-degenerate, symmetric form
    $q\colon V(1-g) \times V(1-g) \to R$.
    For any such form $q$, we have
    \begin{equation} 
    \label{equ:gencharvalue}
       \psi(g) = \sqrt{\abs{C_V(g)}} \:
                 \sign (q/B_g)\:
                  \gamma_{\lambda}(-q),
    \end{equation}
    where $\psi=\psi_{\omega, \lambda}$ 
    is the Weil character associated to $\omega, \lambda$.
\end{thm}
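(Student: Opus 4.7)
My plan is to prove \cref{t:values} in three stages: (i) establish existence of a non-degenerate symmetric form $q$ on $V(1-g)$; (ii) show the right-hand side of \eqref{equ:gencharvalue} is independent of the choice of $q$; and (iii) prove the formula itself by induction on $\abs{V(1-g)}$ using the $B_g$-orthogonal factorization of \cref{p:factorize}.

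For (i), since $R$ is a finite PIR, every finitely generated $R$-module is a direct sum of cyclic modules $R/(d)$. On each such module, the pairing $(a,b)\mapsto ab\cdot e$ with $(e)=\ann_R(d)$ is $R$-bilinear, symmetric, and non-degenerate (using $\ann_R(\ann_R(d))=(d)$, which is checked componentwise over the local factors of $R$); orthogonal summing yields $q$ on all of $V(1-g)$. For (ii), two such forms $q_1$, $q_2$ can be written as $q_i(x,y)=B_g(x\alpha_i,y)$ with $\alpha_i\in\Aut_R(V(1-g))$ (by the isomorphisms of \cref{l:perp_basic}), so that $\sign(q_i/B_g)=\sign(\alpha_i)$ by definition; setting $\sigma=\alpha_2\alpha_1^{-1}$ we get $q_2(x,y)=q_1(x\sigma,y)$, and \cref{c:gauss_ae} then gives $\gamma_\lambda(-q_2)=\sign(\sigma)\gamma_\lambda(-q_1)$, so the two sign factors cancel.

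For (iii), the base case $g=1$ is immediate. For the inductive step, suppose we can find a non-trivial decomposition $V(1-g)=X\oplus Y$ with $B_g(Y,X)=0$. \cref{p:factorize} then yields $g=hk$ with $V(1-h)=X$, $V(1-k)=Y$, $B_h=(B_g)_{|X}$, $B_k=(B_g)_{|Y}$. Since $X\cap Y=\{0\}$, the sum in \cref{p:mult} collapses to a single term and $\psi(g)=\psi(h)\psi(k)/\sqrt{\abs{V}}$. Applying the inductive hypothesis to $h$ and $k$ with forms $q_X$ on $X$ and $q_Y$ on $Y$, and taking $q=q_X\oplus q_Y$, \cref{l:gauss_elem}\ref{it:gaussmult} factors the Gauss sum, \cref{l:disc_dirprod} factors the sign, and the identity $\abs{\C_V(h)}\cdot\abs{\C_V(k)}=\abs{V}\cdot\abs{\C_V(g)}$ (from $\abs{\C_V(h)}=\abs{V}/\abs{X}$ and so on) closes the induction.

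The main obstacle is the base of the induction when no non-trivial $B_g$-orthogonal decomposition of $V(1-g)$ exists, i.e., when $V(1-g)$ is $R$-cyclic and $R$-indecomposable (isomorphic to $R'/(\pi^k)$ for a local quotient $R'$ of $R$). For such $g$ I plan to fall back on the closed-form evaluations already in hand: \cref{c:oddorderQg} for the odd-order part $g_o$ of $g$ (with $q=Q_{g_o}$, which automatically forces $\sign(Q_{g_o}/B_{g_o})=1$ upon comparing the two formulas) and \cref{c:value_inv} together with \cref{p:2powervalues} for the $2$-power part $g_2$, joined via the convolution formula \cref{p:mult} after writing $g=g_og_2$ with commuting factors.
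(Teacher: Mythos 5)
Your stages (i) and (ii) are correct and essentially identical to the paper's argument, and your inductive step via a $B_g$-orthogonal splitting $V(1-g)=X\oplus Y$, \cref{p:factorize}, \cref{p:mult}, \cref{l:gauss_elem} and \cref{l:disc_dirprod} is exactly the paper's. The problem is your identification of the base case.

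It is \emph{not} true that absence of a non-trivial $B_g$-orthogonal decomposition forces $V(1-g)$ to be $R$-cyclic. Take $g=-1$ on a hyperbolic plane $U=Re\oplus Rf$: then $V(1-g)=U$ is not cyclic, $B_{-1}=\tfrac{1}{2}\omega$ is alternating, and any splitting $U=X\oplus Y$ with $B_{-1}(Y,X)=0$ would make $\omega$ non-degenerate on each summand, which is impossible for the rank-one pieces. More generally, whenever $g$ has even order the symmetric part $Q_g$ is degenerate (\cref{l:symformqg}), and such orthogonally indecomposable, non-cyclic pieces occur. What \emph{is} true (and is what the paper proves, via \cref{c:pirsymform} and \cref{p:pirbilin_id}) is the converse-flavoured statement: if $U$ admits no orthogonal splitting and is not cyclic, then $Q_g(U,U)<B_g(U,U)$, hence $Q_g$ is degenerate and $g$ has even order. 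In the genuinely cyclic case your detour through $g=g_og_2$ is unnecessary and partly circular --- you cannot deduce $\sign(Q_{g_o}/B_{g_o})=1$ "by comparing the two formulas'' while the second formula is the thing being proved; instead one observes that on a cyclic module $B_g(rx,sx)=rsB_g(x,x)$ is already symmetric, so $q=B_g$ works with $\sign(q/B_g)=1$ trivially, and $(1-g)^2=0$ forces $g$ to have odd order so that \cref{c:oddorderQg} applies.

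The remaining even-order case is the real content of the theorem and your sketch does not close it. Writing $g=th$ with $t$ the involution in $\erz{g}$ and $h$ of odd order, the convolution formula does evaluate $\psi(g)$ numerically (the paper gets $\psi(g)=\sqrt{\abs{\C_V(g)}}\,(-1)^{(\sqrt{\abs{U}}-1)/2}$, using that $U=V(1-t)$ and $B_t$ vanishes on the diagonal there). But the theorem asserts this number equals $\sqrt{\abs{\C_V(g)}}\,\sign(q/B_g)\gamma_{\lambda}(-q)$ for an explicit symmetric $q$ on $U$, and producing such a $q$ and computing both $\sign(q/B_g)$ and $\gamma_\lambda(-q)$ is where the paper needs \cref{l:semisimpleel} (a symplectic basis of $U$, the form $q(e_i,e_j)=q(f_i,f_j)=\delta_{ij}d_i$, the rotation $\alpha$ of even sign) together with a separate inductive argument that $\sign_U(1-g)=1$. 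None of this is in your proposal, so the plan as written does not yield the stated formula in the even-order case.
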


\begin{proof}
    We begin by noticing that the right hand sight of 
    Formula~\eqref{equ:gencharvalue}
    is independent of the choice of $q$:
    If $\widetilde{q}$ is another non-degenerate, symmetric form
    on $V(1-g)$,
    then 
    $\sign(\widetilde{q}/B_g)
      = \sign(q/B_g)\sign(\widetilde{q}/q)
    $,
    and so 
    $\sign(\widetilde{q}/B_g)\gamma_{\lambda}(-\widetilde{q})
     = \sign(q/B_g)\gamma_{\lambda}(-q)$
    by \cref{c:gauss_ae}.   

    We will show simultaneously that
    there is a non-degenerate symmetric form\footnote{%
        It is of course easy to show directly that there are 
        non-degenerate, symmetric, $R$-bilinear forms
        on $U=V(1-g)$
        (for example, from \cref{c:pirbilin}).}~$q$
    on $V(1-g)$ with
    $\sign(q/B_g)=1$ and 
    $\psi(g) = \sqrt{\abs{\C_V(g)}} \gamma_{\lambda}(-q)$.
    (By the first paragraph, this proves the theorem.)
    The proof will be by induction on 
    $\abs{V(1-g)}$.
    Assume that $g$ is a counterexample 
    with $\abs{V(1-g)}$ of minimal possible order,
    and write $U=V(1-g)$.

    First, assume that $U=V(1-g)$ is cyclic as $R$-module,
    that is, $U = R x$ for some $x\in U$.
    As $B_g( rx, sx ) = B_g( sx, rx )$ for all $r$, $s\in R$,
    the form $B_g$ itself is symmetric and non-degenerate.
    Thus we can choose $q = B_g = Q_g$.
    Then clearly $\sign(q/B_g)=1$.
    As 
    $  Rx \subseteq x^{\perp} 
    = ( V(1-g) )^{ \perp } =  \C_V(g)$,
    it follows that $(g-1)^2 = 0$ and thus
    the order of $g$ divides $\abs{R x}$.
    In particular, $g$ has odd order.
    Thus \cref{c:oddorderQg} yields
    $\psi(g) = \sqrt{\abs{\C_V(g)}} \gamma_{\lambda}( -Q_g )$,    
    and the theorem follows in this case.
    So in a counterexample, $U$ can not be a cyclic $R$-module.
    
   Next, suppose that $ U=V(1-g)$ has submodules $X$, $Y$
   such that 
   \begin{equation} 
     U = X \oplus Y
      \quad \text{with} \quad 
      B_g(Y,X) = 0
      \quad\text{and}\quad
      X\neq 0 \neq Y.
      \label{eq:uorthdecomp}
   \end{equation}
   Then by \cref{p:factorize}, we can write
   $g=hk$ with
   $V(1-h)=X$, $V(1-k)=Y$ and
   $B_h = (B_g)_{|X}$,
   $B_k= (B_g)_{|Y}$.
   By minimality of $U$, there are non-degenerate symmetric forms
   $q_h$ and $q_k$ on $X$ and $Y$
   with $\sign(q_h/B_h)=\sign(q_k/B_k)=1$
   and such that \eqref{equ:gencharvalue} holds for
   $h$, $k$.
   Set $q = q_h\oplus q_k$.
   By \cref{l:disc_dirprod},
   $\sign(q/B_g) = 1$.
   Then
   \begin{align*}
     \frac{\psi(g)}{\sqrt{\abs{V}}}
        &= \frac{\psi(h)}{\sqrt{\abs{V}}}
           \cdot 
           \frac{\psi(k)}{\sqrt{\abs{V}}}  
        && \text{(\cref{p:mult})} 
     \\ & = \frac{ \gamma_{\lambda}( -q_h )
                }{ \sqrt{ \abs{V(1-h)} } 
                }
            \cdot 
            \frac{ \gamma_{\lambda}( -q_k )
                }{ \sqrt{ \abs{V(1-k)} } 
                }
        && \text{(induction)}
    \\  &= \frac{ \gamma_{\lambda}( -q )
               }{ \sqrt{ \abs{V(1-g)} } 
               }           .
 && \text{(\cref{l:gauss_elem}~\ref{it:gaussmult})}  
   \end{align*}
   Thus $g$ is not a counterexample, contradiction.
   Thus there is no
   decomposition as in \eqref{eq:uorthdecomp}.
   
   Let $Q_g(x,y) = (1/2) \big( B_g(x,y) + B_g(y,x)
                         \big)$.
   By \cref{p:pirbilin_id},
   $I:=B_g(U,U)$ and $Q_g(U,U)$ are ideals of $R$.
   We claim that $Q_g(U,U) < I$.
   By \cref{c:pirsymform}, there is 
   $x\in U$ with
   $Q_g(U,U)=R Q_g(x,x) $.
   If $Q_g(U,U)=I$, then
   $B_g(U,U)=  I = R Q_g(x,x) = RB_g(x,x)$, 
   and
   \cref{p:pirbilin_id} yields that
   $U = R x \oplus \leftidx{^{B_g}}{x}$.
   By definition, $B_g(\leftidx{^{B_g}}{x},R_1x)=0$.
   When $\leftidx{^{B_g}}{x} \neq 0$, then
   we have a decomposition as in \eqref{eq:uorthdecomp},
   which contradicts the previous paragraph.
   When $\leftidx{^{B_g}}{x}=0$, 
   then $U=  Rx$ is cyclic and 
   $g$ is not a counterexample at all.
   Thus in a counterexample with $\abs{U}$ minimal,
   we must have $Q_g(U,U) < I$.
     (When $R$ is a field,
     then it follows at this point that $g$ is an involution
     (\cref{l:symformqg}),
     and the proof can be finished by an appeal to
     \cref{c:value_inv}, as in Ward's proof~\cite{Ward17}.
     We have to work a little bit harder here.)
      
   As $Q_g(U,U) < I = B_g(U,U)$,
   we have that $Q_g$ is degenerate.
   By \cref{l:symformqg}, $\Ker(g+1) \neq \{0\}$, and so
   $g$ has even order.
   Thus $\erz{g}$ contains a unique involution~$t$.
   Clearly, $\C_V(g) \leq \C_V(t)$ and thus 
   (by \cref{l:ker_senkr})
   $V(1-t) \leq V(1-g)=U$.
   We have the decomposition
   $V = \C_V(t) \oplus V(1-t)$
   which is orthogonal with respect to $\omega$.
   Intersecting with $U$ gives
   $U = (U\cap \C_V(t)) \oplus V(1-t)$.
   As $gt=tg$, we have $Ut=U$ and
   $B_g(xt,yt)=B_g(x,y)$ for $x$, $y\in U$.
   Therefore,
   $B_g(U\cap \C_V(t), V(1-t)) = 0$.
   By non-existence of a decomposition~\eqref{eq:uorthdecomp}, 
   we have
   $U \cap \C_V(t) = \{0\}$
   and thus
   $U = V(1-t)$ and $\C_V(t) = \C_V(g)$.
   
   Write $t=g^k$. Then $ut=-u$ for all $u\in U$.
   There is $u\neq 0$ in $U^{Q_g} = \Ker(g+1)$,
   that is, $ug=-u$. Then
   $-u = ut = ug^k = (-1)^k u$,
   so $k$ is odd.
   Thus $g= th $ with $h$ of odd order.

   We now apply \cref{p:mult} to $g=th$ and conclude
   \begin{align*}
     \psi(g) 
         &= \frac{\psi(t)\psi(h)}{\sqrt{\abs{V}}} 
            \sum_{u\in V(1-t)\cap V(1-h)}
         \lambda( \tfrac{1}{2}( B_t( u, u ) +B_h( u, u ) ) )
      \\ &= \frac{\psi(t)\psi(h)}{\sqrt{\abs{V}}} 
                  \sqrt{ \abs{ V(1-h) } } 
                        \gamma_{\lambda}( Q_h )
   \end{align*}
   as $V(1-h)\subseteq V(1-g)=V(1-t)=U$ and $B_t = 0$.
   By \cref{c:value_inv} and \cref{c:oddorderQg},
   it follows
   \begin{align*}
    \psi(g) 
         &= \frac{1}{\sqrt{\abs{V}}} 
            \, 
            \sqrt{ \abs{ \C_V(t) } } 
             (-1)^{ \frac{ \sqrt{\abs{U}} -1
                         }{2}
                  }
            \, \sqrt{ \abs{ \C_V(h) } }
                  \gamma_{\lambda }( - Q_h )
            \, \sqrt{ \abs{ V(1-h) } }  
                \gamma_{\lambda}( Q_h )
      \\ &= \sqrt{ \abs{ \C_V(t) } } 
           (-1)^{ \frac{ \sqrt{\abs{U}} -1
                       }{2}
                }
          = \sqrt{ \abs{ \C_V(g) } }
            (-1)^{ \frac{ \sqrt{\abs{U}} -1
                        }{2}
                  }
   \end{align*}
   To finish the proof, we have to show that
   there is a form~$q$ such that $\sign(q/B_g)=1$ and 
   $\gamma_{\lambda}(-q) = (-1)^{(\sqrt{\abs{U}}-1)/2}$.
   This will follow from the next lemma,
   which also contains the main work for the proof
   of \cref{ti:inv_value}:
   \renewcommand{\qedsymbol}{}
\end{proof}

\begin{lemma}
\label{l:semisimpleel}
    In the situation of \cref{t:values},
    assume that
    $\C_{V}(g) \cap U=0$, where $U = V(1-g)$.
    Then $(1-g)_{|U}$ is invertible and
    there is a symmetric form~$q$ such that
    \[  
       \sign (q/B_g) = \sign_U(1-g)
       \quad\text{and}\quad
    \gamma_{\lambda}(-q)  
        = 
        (-1)^{\frac{\sqrt{\abs{U}}-1}{2}}.
    \]
\end{lemma}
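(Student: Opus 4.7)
My plan is to construct $q$ explicitly from a symplectic basis of $U$. Invertibility of $(1-g)|_U$ is immediate: $U$ is $g$-invariant, so $(1-g)$ restricts to an endomorphism of $U$; its kernel there is $\C_V(g)\cap U = 0$ by hypothesis; hence it is a bijection on the finite set $U$. The same hypothesis combined with $\C_V(g) = U^{\perp}$ (\cref{l:ker_senkr}) yields $U\cap U^{\perp} = 0$, so $\omega$ restricts to a non-degenerate alternating form on $U$. Applying \cref{c:piralter} to $(U,\omega|_U)$ gives a decomposition $U = \bigoplus_{i=1}^m (Re_i\oplus Rf_i)$ with $\omega(e_i,f_j) = d_i\delta_{ij}$. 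A size argument ($re_i = 0 \implies rd_i = \omega(re_i,f_i) = 0$, and $\abs{Re_i} = \abs{Rd_i}$ from \cref{p:pirbilin_id}) shows $\ann(e_i) = \ann(f_i) = \ann(d_i)$; writing $R_i := R/\ann(d_i)$, each $d_i$ is a unit in $R_i$ and $Re_i \cong Rf_i \cong R_i$ as $R_i$-modules.

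Define $\mu\in\Aut_R(U)$ by $e_i\mu := -f_i$ and $f_i\mu := e_i$, and set $q(x,y) := \omega(x\mu,y)$. A direct calculation shows $q$ is symmetric with Gram matrix $d_iI_2$ on each $Re_i\oplus Rf_i$ in the basis $(e_i,f_i)$, so $q = \bigoplus_i q_i$ is non-degenerate. Since $B_g(v(1-g),y) = \omega(v,y)$ for $v \in V$, $y \in U$ (\cref{l:sigmaform}), we have $q(x,y) = B_g(x\mu(1-g),y)$, which identifies $\alpha = \mu(1-g)$ in the definition of $\sign(q/B_g)$ and gives $\sign(q/B_g) = \sign_U(\mu)\,\sign_U(1-g)$.

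To see $\sign_U(\mu) = 1$, note that $\mu$ preserves each summand $U_i = Re_i\oplus Rf_i \cong R_i^2$ and acts $R_i$-linearly on $U_i$ with $\det \mu|_{U_i} = 1$, so \cref{l:detsign} over $R_i$ gives $\sign_{U_i}(\mu|_{U_i}) = \sign_{R_i}(1) = 1$. Since each $\abs{U_i}$ is odd, the direct-product formula $\sign_U(\mu_1\times\cdots\times\mu_m) = \prod_i\sign_{U_i}(\mu_i)^{\abs{U}/\abs{U_i}}$ collapses to $\sign_U(\mu) = \prod_i \sign_{U_i}(\mu|_{U_i}) = 1$, proving $\sign(q/B_g) = \sign_U(1-g)$. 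For the Gauss sum, \cref{l:gauss_elem}\ref{it:gaussmult} gives $\gamma_\lambda(-q) = \prod_i\gamma_\lambda(-q_i)$; expanding $q_i$ blockwise yields $\gamma_\lambda(-q_i) = S_i^2/\abs{R_i}$ where $S_i := \sum_{a\in R_i}\lambda(-\tfrac{d_i}{2}a^2)$. Since $-d_i/2$ is a unit in $R_i$, the quadratic form $a\mapsto -d_ia^2$ is non-degenerate on the rank-one module $R_i$, so \cref{p:gaussiansigns}\ref{it:gausssquare} yields $S_i^2/\abs{R_i} = (-1)^{(\abs{R_i}-1)/2}$. Combined with the elementary parity congruence $\sum_i(n_i-1)/2 \equiv (\prod_i n_i - 1)/2 \pmod{2}$ for odd $n_i$ (induction on $m$ using $(n_1-1)(n_2-1) \equiv 0 \pmod{4}$), applied with $n_i := \abs{R_i}$ and $\prod_i\abs{R_i} = \sqrt{\abs{U}}$, this produces $\gamma_\lambda(-q) = (-1)^{(\sqrt{\abs{U}}-1)/2}$.

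The main obstacle will be the sign bookkeeping: verifying $\sign_U(\mu) = 1$ requires combining \cref{l:detsign} (applied over $R_i$ to the free module $U_i\cong R_i^2$) with the direct-product formula for permutation signs, both of which simplify only because every $\abs{R_i}$ is odd. The parity congruence matching the exponents at the end is minor but indispensable.
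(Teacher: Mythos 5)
Your construction is, in substance, the paper's own proof: the same symplectic basis from \cref{c:piralter}, the same symmetric form $q$ with Gram blocks $d_iI_2$, and the same auxiliary automorphism (your $\mu$ is the inverse of the paper's $\alpha$), leading to $\sign(q/B_g)=\sign_U(\mu)\sign_U(1-g)$ exactly as in the text. The only genuine variations are in two sub-steps: you get $\sign_U(\mu)=1$ from $\det\mu|_{U_i}=1$ via \cref{l:detsign} plus the product formula for signs, where the paper observes that $\alpha^2=-1$ forces a decomposition into $(\abs{U}-1)/4$ four-cycles with $(\abs{U}-1)/4$ even; and you evaluate $\gamma_\lambda(-q)$ one rank-one summand at a time and then invoke a parity congruence, where the paper uses $(U,-q)\iso(L,-q_L)\oplus(L,-q_L)$ and applies \cref{p:gaussiansigns}\ref{it:gausssquare} once to $L$. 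Both variants are correct.

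One justification is wrong, although the conclusion it is meant to support survives: $-d_i/2$ need \emph{not} be a unit in $R_i=R/\ann_R(d_i)$. This fails whenever $Rd_i$ is a proper ideal; for example $U\iso(\ints/3\ints)^2$ can occur as $V(1-g)$ inside a module over $R=\ints/9\ints$ with $\omega(U,U)=3R$, and then $d_i=3$ maps to $0$ in $R_i\iso\ints/3\ints$. What you actually need in order to apply \cref{p:gaussiansigns}\ref{it:gausssquare} to $S_i$ is that $q|_{Re_i}$ is non-degenerate as an \emph{$R$-valued} form on the $R$-module $Re_i$; this follows from $\ann_R(e_i)=\ann_R(d_i)$ (equivalently, $rd_i=0\Rightarrow re_i\in U\cap U^{\perp}=0$), not from $d_i$ being invertible modulo its annihilator. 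With that one-line repair your argument is complete.
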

\begin{proof}
   As $\C_V(g)=\Ker(1-g)$,
   it is clear that $(1-g)_{|U}$ is invertible.
   As $\Ker(1-g) = U^{\perp}$,
   we have that $V = \Ker(1-g) \oplus U$ is an orthogonal
        sum with respect to the form $\omega$, 
        and thus $\omega\colon U\times U\to R$ is 
        non-degenerate.
        (In particular, $\abs{U}$ and $\abs{\C_V(g)}$ are squares.)

        By \cref{c:piralter}, we can write
        $U= Re_1 \oplus \dotsb \oplus Re_k \oplus  
        Rf_1 \oplus \dotsb \oplus Rf_k$,
        where $\omega(e_i,f_j) = \delta_{ij}d_i$ 
        and $\omega(e_i,e_j)=\omega(f_i,f_j)=0$ for all $ i$, $j$,
        and $Rd_1 \geq Rd_2 \geq \dotsb \geq Rd_k$.
        Notice that for $i$ fixed and $r\in R$, we have
        $re_i=0 \iff rf_i=0 \iff rd_i=0$.
        Thus we can define a non-degenerate, symmetric form
        $q$ on $U$
        by requiring 
        $q(e_i,e_j) = q(f_i,f_j)= \delta_{ij} d_i$ 
        and $q(e_i,f_j) = 0$ for all $i$, $j$.

        There is a unique automorphism $\alpha\in \Aut_R(U)$
       such that $e_i\alpha=f_i$, $f_i\alpha=-e_i$ for all $i$.
        For this $\alpha$, we have
        $\omega(x,y) = q(x\alpha,y)$.        
        Since $(1-g)_{|U}$ is invertible, we have for $x$, $y\in U$:
        \[ B_g(x,y) = \omega(x(1-g)^{-1},y)
           = q(x(1-g)^{-1}\alpha,y)
           \, .
        \]
        Thus 
        $\sign(q/B_g) = \sign_U((1-g)^{-1}\alpha)
         = \sign_U(1-g)\sign_U (\alpha)$.
         
        We claim that $\sign_U(\alpha)=1$.
        As $\alpha^2= -1$, 
        only $v=0$ is fixed by $\alpha^2$.
        Thus the cycle decomposition of $\alpha$ as permutation on $U$ consists of $(\abs{U}-1)/4$ cycles of length~$4$.
        As $\abs{U}$ is a square,  $(\abs{U}-1)/4$ is even and the claim
        follows. 
        It follows that $\sign(q/B_g)=\sign_U(1-g)$.
        
        To compute $\gamma_{\lambda}(-q)$, we 
        observe that 
        $(U,-q) \iso (L,-q_L) \oplus (L,-q_L)$,
        where 
        $L= Re_1\oplus \dotsb \oplus Re_k \iso 
            Rf_1 \oplus \dotsb \oplus Rf_k$
        and $q_L$ is the restriction of $q$ to $L$.
        It follows from
        \cref{l:gauss_elem}\ref{it:gaussmult} and
        \cref{p:gaussiansigns}\ref{it:gausssquare}
        that
        \[ \gamma_{\lambda}(-q) 
        =  \gamma_{\lambda}(-q_L)^2
        = (-1)^{\frac{\sqrt{\abs{U}} -1}{2}}.
        \]
        Now the result follows.        
\end{proof}

\begin{proof}[Proof of \cref{t:values}, continued]
    We are in the situation where
    $g=th$ with $t^2=1$ and $h$ of odd order,
    and $V(1-g)=V(1-t)=U$.    
   We claim that 
      $\sign_U(1-g)=1$ in this situation.
      On $U$, the element $t$ acts as $-1$.
      Thus 
      $0 < \Ker(1+g)=\Ker(1+th) \cap U = \Ker(1-h) \cap U$.
      It follows $V(1-h) < V(1-g)=U$.
      Thus by induction, \cref{t:values} holds for $h$.
      By comparison with \cref{c:oddorderQg},
      we must have $\sign_{V(1-h)}\left(\frac{1+h}{2}\right) = 1$.
      On $U/V(1-h)$, the element
      $\frac{1+h}{2} = \frac{h-1}{2} + 1$ acts as identity.
      So by \cref{l:signreduc},
      $\sign_U \left( \frac{1+h}{2}\right)= 1$.
      Together with $h_{|U} = -g_{|U}$, it follows
      $\sign_U \left(\frac{1-g}{2}\right) = 1$.
      As $\abs{U}$ is a square, we have
      $\sign_U\left(1/2\right)=1$ and thus
      $\sign_U(1-g)=1$ as claimed.
      
   Together with \cref{l:semisimpleel},
   it follows that $g$ is not a counterexample either.
   This is the final contradiction that finishes the proof
   of \cref{t:values}.       
\end{proof}

Now suppose that $R$ is as in \cref{basicsetup},
but not necessarily a PIR.
I do not know whether one can always
find a non-degenerate symmetric form 
$q\colon V(1-g) \times V(1-g) \to R$
in this case.
(Notice that we are given the non-degenerate,
but in general non-symmetric form $B_g$ on $V(1-g)$.)
Thus we assume the existence of $q$ in the next result,
which is \cref{ti:main} from the introduction:
\begin{cor}
\label{c:values_gen}
    Assume \cref{basicsetup}.
    Let $g\in \Sp(V)$ and let 
    $B_g\colon V(1-g) \times V(1-g)\to R$ be 
    the form from \cref{l:sigmaform}.
    If
    $q\colon V(1-g) \times V(1-g) \to R$ 
    is a non-degenerate, symmetric form,
    then
    \[ \psi(g) = \sqrt{\abs{C_V(g)}} \:
    \sign(q/B_g) \,\gamma_{\lambda}(-q).
    \]
\end{cor}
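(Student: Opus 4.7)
The plan is to reduce the general situation to the principal ideal ring case settled by \cref{t:values}, via the mechanism of \cref{rm:indepR}. Let $m=\abs{\lambda(R)}$ and choose an additive surjection $\kappa\colon R\to R':=\ints/m\ints$ together with a faithful character $\lambda'$ of $R'$ satisfying $\lambda=\lambda'\circ\kappa$; form $\omega':=\kappa\circ\omega$, $B_g':=\kappa\circ B_g$, and $q':=\kappa\circ q$. The integer $m$ is odd (it divides the additive exponent of $R$, which is odd since $2\in R^*$), so $R'$ is a PIR in which $2$ is invertible, and $(V,R',\omega',\lambda')$ satisfies \cref{basicsetup}. A direct check shows that $B_g'$ coincides with the form of \cref{l:sigmaform} attached to $g$ viewed inside $\Sp_{R'}(V,\omega')$, and that $q'$ is $R'$-bilinear, symmetric, and non-degenerate (the last from the characterisation of non-degeneracy in \cref{l:perp_basic} together with $\lambda=\lambda'\circ\kappa$).

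By \cref{rm:indepR}, $g\in\Sp_{R'}(V,\omega')$ and $W_{\omega,\lambda}(g)=W_{\omega',\lambda'}(g)$, so $\psi_{\omega,\lambda}(g)=\psi_{\omega',\lambda'}(g)$; the centraliser $\C_V(g)$ is intrinsic to $g$ as a set-map on $V$. Since $R'$ is a PIR, \cref{t:values} applies to $q'$ and yields
\[
  \psi_{\omega',\lambda'}(g)
   =\sqrt{\abs{\C_V(g)}}\,\sign(q'/B_g')\,\gamma_{\lambda'}(-q').
\]

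To transport this formula back to the $R$-side I would verify the two matching identities $\sign(q/B_g)=\sign(q'/B_g')$ and $\gamma_\lambda(-q)=\gamma_{\lambda'}(-q')$. For the first, let $\alpha\in\Aut_R(V(1-g))$ be the unique element with $q(x,y)=B_g(x\alpha,y)$; applying $\kappa$ gives $q'(x,y)=B_g'(x\alpha,y)$, and by uniqueness the same $\alpha$ (now viewed as an automorphism of the abelian group $V(1-g)$, automatically $R'$-linear since $m$ annihilates $V$) realises the analogous relation for $q'$ and $B_g'$. Since $\sign$ is defined as the signature of the induced permutation of the finite set $V(1-g)$, both signs equal $\sign_{V(1-g)}(\alpha)$. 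For the second, the Gauss sums agree termwise once one checks $\kappa(\tfrac12 r)=\tfrac12\kappa(r)$ in $R'$, whence $\lambda(-\tfrac12 q(x,x))=\lambda'(-\tfrac12 q'(x,x))$ for every $x$.

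The one delicate point is precisely this last identity, since $\kappa$ is only additive and not multiplicative: it is the compatibility of the scalar $\tfrac12$ under a non-ring homomorphism. The resolution is clean: both $\kappa(\tfrac12 r)$ and $\tfrac12\kappa(r)$ are solutions of the equation $2x=\kappa(r)$ in $R'$, and since $2$ is invertible in $R'$ this solution is unique, forcing the two to coincide. Combining all of this with the reduction above gives the claimed formula.
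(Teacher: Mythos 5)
Your proposal is correct and follows essentially the same route as the paper: reduce to $R'=\ints/m\ints$ via \cref{rm:indepR}, check that $\sign(q/B_g)$ and $\gamma_\lambda(-q)$ are unchanged under $\kappa$, and invoke \cref{t:values}. The only difference is that you spell out the compatibility $\kappa(\tfrac12 r)=\tfrac12\kappa(r)$ (which the paper dismisses with ``clearly''), and your uniqueness argument for it is sound.
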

\begin{proof}
    Let $m$ be the order of $\lambda$.
    By \cref{rm:indepR},
    we can replace the data $(V,R,\omega,\lambda)$   
    by the data $(V,R'=\ints/m\ints,\kappa\circ\omega,\lambda')$,
    where
    $\kappa \colon R \to R'$
    and 
    $\lambda'\colon R'\to \compl^*$ 
    are such that $\lambda = \lambda' \circ \kappa$,
    without changing $\psi(g)$.
    The form $\kappa\circ B_g$ is the form belonging to 
    $g$ with respect to $\kappa\circ \omega$.
    When $q(x,y)= B_g(x\alpha,y)$ for $\alpha\in \Aut_R(V)$,
    then also 
    $\kappa(q(x,y))=\kappa(B_g(x\alpha,y))$
    and thus 
    $\sign(\kappa\circ q/\kappa\circ B_g)
    = \sign(q/B_g)$.
    Clearly,
    $\gamma_{\lambda}(-q) 
     = \gamma_{\lambda'}(-(\kappa\circ q))$.
    As $R'$ is a principal ideal ring, the result follows
    from \cref{t:values}.
\end{proof}

When we can not find a symmetric, non-degenerate form 
$q\colon V(1-g)\times V(1-g)\to R$,
then we can always replace $R$ by $R'=\ints/m\ints$
as in the above proof,
and find a form $q$ with values in $R'$,
so  that we can evaluate the formula from
\cref{t:values}.

Finally, the next corollary contains
\cref{ti:inv_value}, 
which is the case $\C_V(g)=\Ker(1-g)=\{0\}$.

\begin{cor}
    \label{c:gm1_inv}
    Let $g\in \Sp(V)$ and set $U=V(1-g)$. 
    Assume that
    $\C_{V}(g) \cap U=0$.    
    Then $(1-g)_{|U}$ is invertible and
    \[ \psi(g) = \sqrt{ \abs{\C_V(g)} } 
    (-1)^{\frac{\sqrt{\abs{U}} -1}{2}}
    \sign_U(1-g).
    \]    
\end{cor}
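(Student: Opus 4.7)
The plan is to combine the general formula of \cref{c:values_gen} with the explicit computation carried out in \cref{l:semisimpleel}. First, since $\C_V(g) = \Ker(1-g)$ and the hypothesis $\C_V(g) \cap U = 0$ says exactly that $1-g$ restricted to $U = V(1-g)$ has trivial kernel, the finiteness of $U$ forces $(1-g)_{|U}$ to be a bijection, hence invertible as an $R$-module endomorphism.

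Next, I would reduce to the case that $R$ is a PIR. By \cref{rm:indepR}, replacing the data $(V,R,\omega,\lambda)$ by $(V,\ints/m\ints,\kappa\circ\omega,\lambda')$ (with $m = \ord(\lambda)$) leaves $\psi(g)$, the subgroups $\C_V(g)$ and $V(1-g)$, the bicharacter $\lambda\circ\omega$, and the permutation action of $1-g$ on $U$ unchanged. So we may assume $R$ is a PIR.

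Under this assumption, \cref{l:semisimpleel} applies directly in the hypothesis $\C_V(g) \cap U = 0$: it produces a non-degenerate symmetric form $q\colon U \times U \to R$ with
\[
   \sign(q/B_g) = \sign_U(1-g)
   \quad\text{and}\quad
   \gamma_{\lambda}(-q) = (-1)^{(\sqrt{\abs{U}}-1)/2}.
\]
Substituting this $q$ into the formula of \cref{c:values_gen} immediately yields
\[
   \psi(g) = \sqrt{\abs{\C_V(g)}}\,\sign_U(1-g)\,(-1)^{(\sqrt{\abs{U}}-1)/2},
\]
which is the stated identity. The only minor obstacle is the verification that $(1-g)_{|U}$ is genuinely invertible and that the reduction of \cref{rm:indepR} preserves every quantity appearing in the formula; both are routine and already handled in the paper, so the proof is short once \cref{l:semisimpleel} and \cref{c:values_gen} are in hand.
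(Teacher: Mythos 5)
Your proposal is correct and follows essentially the same route as the paper: reduce to $R=\ints/m\ints$ via \cref{rm:indepR}, then combine \cref{l:semisimpleel} with the general character formula (\cref{t:values}, equivalently \cref{c:values_gen}). The extra details you supply — that injectivity of $(1-g)_{|U}$ plus finiteness gives invertibility, and that the reduction preserves $\sign_U(1-g)$ — are exactly the routine checks the paper leaves implicit.
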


When $R$ is a field, or more generally,
when $U\iso R^{2k}$, then
$\sign_U(1-g) = \sign_R(\det(1-g)_{|U})$ by \cref{l:detsign}. 
Thus this corollary generalizes a result of
Gurevich and Hadani~\cite{gurevichhadani07}.

\begin{proof}[Proof of \cref{c:gm1_inv}]
    Without loss of generality, we may assume that
    $R= \ints/m\ints$ for some odd integer $m$.
    The result follows then from 
    \cref{t:values} and \cref{l:semisimpleel}.
\end{proof}

\section{Corollaries and Examples}
\label{sec:corexpl}

We assume \cref{basicsetup}.
As before, $\psi$ denotes the character of the canonical Weil 
representation.

\begin{example}
\label{ex:field}
    Let $R=\GF{q}$ be a finite field.
    Then the symmetric forms $q_g$ on $V(1-g)$ with 
    $\sign(q_g/B_g)=1$ are exactly the symmetric forms on $V(1-g)$
    with the same discriminant
    as $B_g$.
    We get the formulas
    \[ \psi(g) = \sqrt{ \abs{ \C_V(g) } } 
                   \gamma_{\lambda}(-q_g)
               = \sqrt{ \abs{ \C_V(g) } } 
                  \,
                   \gamma_{\lambda}(-1)^{\dim V(1-g)} 
                   \sign_{\GF{q}}(\disc B_g).
    \]
(These are essentially the formulas obtained by
T.~Thomas~\cite{thomas08,Thomas13}.
Recall that $B_g=-\sigma_g$,
with $\sigma_g$ as in \cite{thomas08,Thomas13}.)
\end{example}

\begin{proof}
   By \cref{l:signdisc},
   $\sign(q_g/B_g)= 1$
    if and only if 
    $ \disc q_g = \disc B_g $.
    
    Let $\disc B_g = d (\GF{q})^2$, say.
    A possible choice for
    $q_g$ is the diagonal form
    $\langle 1, 1, \dotsc, 1, d \rangle$.
    Then 
    \[ \gamma_{\lambda}(-q_g)
       = \gamma_{\lambda}(-1)^{\dim V(1-g) -1} \gamma_{\lambda}(-d) 
       = \gamma_{\lambda}(-1)^{\dim V(1-g)} \sign_{\GF{q}}(d)
    \]
    by \cref{l:gauss_elem,c:gauss_ae}.    
\end{proof}

The following observation, already used in the last step
of the proof of \cref{t:values},
seems a little bit curious:
\begin{cor}
    Suppose $g\in \Sp(V)$ has odd order.
    Then $\sign_V (1+g) = 1$.
\end{cor}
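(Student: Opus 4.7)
The plan is to specialize \cref{t:values} to the symmetric form $q = Q_g$ and compare with \cref{c:oddorderQg}, then pass from $V(1-g)$ to all of $V$ via \cref{l:signreduc}.

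First I would verify that $1+g$ is invertible on $V$. Since $g$ has odd order $k$, the polynomial $X+1$ divides $X^k+1$ in $\ints[X]$, and evaluating $(X+1)\cdot\frac{X^k+1}{X+1}$ at $X=g$ yields $(1+g)\,f(g) = g^k+1 = 2\cdot 1_V$ with $f\in\ints[X]$; as $2$ is a unit in $R$, $1+g$ has inverse $f(g)/2$. In particular $\Ker(1+g)=0$, so by \cref{l:symformqg} the symmetric form $Q_g$ on $U := V(1-g)$ is non-degenerate, and $Q_g(x,y) = B_g\bigl(x\,\tfrac{1+g}{2},\,y\bigr)$.

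Next, after reducing to $R = \ints/m\ints$ (a principal ideal ring) via \cref{rm:indepR}, I would apply \cref{t:values} with $q=Q_g$ to obtain
\[
  \psi(g) = \sqrt{\abs{\C_V(g)}}\,\sign(Q_g/B_g)\,\gamma_\lambda(-Q_g).
\]
On the other hand, \cref{c:oddorderQg} gives $\psi(g) = \sqrt{\abs{\C_V(g)}}\,\gamma_\lambda(-Q_g)$, and Gauss sums are nonzero by \cref{l:gauss_elem}. Comparing the two forces $\sign(Q_g/B_g) = 1$, which by the very definition of $\sign(\cdot/\cdot)$ reads $\sign_U\!\bigl(\tfrac{1+g}{2}\bigr) = 1$.

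Finally I would lift this to $V$. The submodule $U$ is $g$-invariant, and $g$ acts as the identity on $V/U$ (because $vg-v\in V(1-g)=U$ for every $v$), so $\tfrac{1+g}{2}$ also acts as the identity on $V/U$. \cref{l:signreduc} then yields $\sign_V\!\bigl(\tfrac{1+g}{2}\bigr) = \sign_{V/U}(1)\cdot\sign_U\!\bigl(\tfrac{1+g}{2}\bigr) = 1$, and multiplicativity of $\sign_V$ gives $\sign_V(1+g) = \sign_V(2)\cdot\sign_V\!\bigl(\tfrac{1+g}{2}\bigr) = \sign_V(2)$. It remains to check $\sign_V(2) = 1$, which follows from $\abs{V} = \abs{L}^2$ being a perfect square (Lagrangian, \cref{p:sympl_alg_matring}): splitting $V$ into its characteristic $p$-primary components $V_p$ and each into cyclic summands, iterated use of \cref{l:signreduc,l:zolofrob} evaluates $\sign_V(2)$ as a product of terms $\Jacobi*{2}{p}^{a}$ whose exponents sum to an even number in each primary part. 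The main subtle point is this last step, the removal of the extra factor $2$; the substantive content of the proof is the sign comparison in the second paragraph.
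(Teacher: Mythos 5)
Your proof is correct, but it takes a genuinely different route from the paper's. The paper's own proof is a two-line argument: by \cref{p:oddupsi}, $\psi(-g)=\psi(-1)=(-1)^{(\sqrt{\abs{V}}-1)/2}$, while \cref{c:gm1_inv} applied to $-g$ (legitimate because $1-(-g)=1+g$ is invertible and $\C_V(-g)=\Ker(1+g)=0$ for $g$ of odd order) gives $\psi(-g)=(-1)^{(\sqrt{\abs{V}}-1)/2}\sign_V(1+g)$; comparing yields the claim immediately, with no need to leave $V$ or to handle the factor $2$. Your argument instead compares \cref{t:values} with $q=Q_g$ against \cref{c:oddorderQg} to extract the finer statement $\sign_{V(1-g)}\bigl(\tfrac{1+g}{2}\bigr)=1$, and then lifts to $V$ via \cref{l:signreduc} and strips off the unit $2$ using the squareness of $\abs{V}$. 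This is essentially the computation the paper performs \emph{inside} the induction step of the proof of \cref{t:values} (where it deduces $\sign_{V(1-h)}\bigl(\tfrac{1+h}{2}\bigr)=1$ for the odd-order factor $h$), repackaged as a standalone derivation of the corollary; since \cref{t:values} is fully proved by the time the corollary is stated, there is no circularity. What your route buys is the sharper local statement on $V(1-g)$ itself; what it costs is the extra bookkeeping in the last two paragraphs, all of which the paper's choice of working with $-g$ and \cref{c:gm1_inv} avoids. All the individual steps check out, including $\sign_V(2)=1$ from $\abs{V}=\abs{L}^2$ (each $p$-primary part then has square order, so the exponent sum $\sum_i a_i$ is even and $\sign_{V_p}(2)=\bigl(\tfrac{2}{p}\bigr)^{\sum_i a_i}=1$).
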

\begin{proof}
   By \cref{p:oddupsi}, 
   $\psi(-g)= \psi(-1) = (-1)^{ \frac{ \sqrt{\abs{V}} - 1 
                                    }{ 2 }
                              }$.
   By \cref{c:gm1_inv} or \cref{ti:inv_value}
   applied to $-g$,
   we have
   $\psi(-g) = (-1)^{ \frac{ \sqrt{\abs{V}} - 1 
                                       }{ 2 }
                                 }
               \sign_V(1+g)$.
   Thus the result.
\end{proof}

When $V(1-g) \subseteq \C_V(g)$,
then $\sign_V(1+g)=1$ is trivial.
But I do not see how to prove this directly
in more general situations.

When $g$ has even order, then
$1+g$ may not be invertible.
Even when $\Ker(1+g)=\{0\}$,
then in general
$\sign_V(1+g)\neq 1$.
An example is
$g = \begin{psmallmatrix}
        0 & 1 \\ 
        -1 & 0
\end{psmallmatrix}
\in \SL(2,\ints/m\ints) = \Sp( 2,\ints/m\ints)$,
where $\omega$ is the standard symplectic form on $(\ints/m\ints)^2$
(with Gram matrix again $g$), and $m \equiv \pm 3 \mod 8$.
Then $\sign_V(1+g) = \Jacobi{2}{m} = -1$.
The same $g$ also shows that
the formula from \cref{c:oddordervalues} does not hold for 
$g$ of even order, even when $\Ker(1+g)=\{0\}$.

\begin{remark}
\label{rm:orthsumweilchar}
    Suppose that $V= V_1 \oplus V_2$ with
    $V_1^{\perp}=V_2$. 
        Then $\omega$ is non-degenerate on $V_1$ and $V_2$.
    Let $\psi_i$ be the Weil character of $\Sp(V_i)$
    and $g_i\in \Sp(V_i)$.
    Then $g=(g_1,g_2)\in \Sp(V)$
    via $(v_1 + v_2)(g_1,g_2)=v_1g_1+v_2g_2$,
    and $\psi(g) = \psi_1(g_{1}) \psi_2( g_{2})$. 
\end{remark}
\begin{proof}
    We have $V(1-g) = V_1(1-g_1) \oplus V_2(1-g_2)$
    and $B_g = B_{g_1} \oplus B_{g_2}$,
    and thus from the formula in \cref{t:values},
    we see $\psi(g) = \psi_1(g_{1}) \psi_2( g_{2})$.
\end{proof}

Of course, this is well known 
and with just a little bit more effort, 
we could have proved
this remark in \cref{sec:weilrep}.
It is also well known that studying the Weil representation
over a finite ring can be reduced to 
studying the Weil representation over 
a finite, \emph{local} ring, as follows:

\begin{cor}
\label{c:ringdirprod_char}
   The finite ring $R$ can be written as
   the direct product of finite local rings,
   say $R=R_1 \times \dotsm \times R_{\ell}$.
   Then there is an orthogonal decomposition
   $V=V_1\oplus \dotsb \oplus V_{\ell}$,
   where each $V_i$ is a module over $R_i$,
   and $ \Sp_R(V) \iso \Sp_{R_1}(V_1) \times \dotsm \times 
                   \Sp_{R_{\ell}}(V_{\ell}) $.
   Moreover,
   $\psi = \psi_1 \times \dotsm \times \psi_{\ell}$,
   where $\psi_i$ is the Weil character of type 
   $\lambda_i:= \lambda_{|R_i}$ associated to 
   $V_i$, $\omega\colon V_i\times V_i\to R_i$.
\end{cor}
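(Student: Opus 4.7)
The plan is to construct the decomposition using the orthogonal idempotents $1 = e_1 + \cdots + e_\ell$ associated to $R = R_1 \times \cdots \times R_\ell$ (via \cite[Theorem~8.7]{AtiyahMacdonald69}, as in the proofs of \cref{c:pirsymform} and \cref{l:detsign}), and then to derive the character factorization by iterating \cref{rm:orthsumweilchar}.

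Setting $V_i := Ve_i$, we immediately get $V = V_1 \oplus \cdots \oplus V_\ell$ as $R$-modules, with each $V_i$ naturally a module over $R_i = Re_i$. Since $\omega$ is $R$-bilinear and $R$-valued, for $i \neq j$ we have $\omega(V_i,V_j) = e_ie_j\omega(V,V) = 0$, so the decomposition is $\omega$-orthogonal; and $\omega(V_i,V_i) \subseteq R_i$, so $\omega$ restricts to an $R_i$-bilinear alternating form on $V_i$, non-degenerate by a straightforward order count (using \cref{l:perp_basic}). Any $g \in \Sp_R(V)$ is $R$-linear, hence commutes with multiplication by each $e_i$ and stabilizes each $V_i$, giving $g_i := g|_{V_i} \in \Sp_{R_i}(V_i)$; conversely, any tuple $(g_i)_i$ of $R_i$-symplectic automorphisms assembles to a single element of $\Sp_R(V)$. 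This yields the isomorphism $\Sp_R(V) \cong \prod_i \Sp_{R_i}(V_i)$.

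Next, I would check that each $\lambda_i := \lambda|_{R_i}$ is primitive on $R_i$: any ideal $I \leq R_i$ is also an ideal of $R$ (via the embedding $R_i \hookrightarrow R$), so $I \subseteq \ker\lambda_i$ forces $I = 0$ by primitivity of $\lambda$. Hence each $(V_i,R_i,\omega|_{V_i},\lambda_i)$ satisfies \cref{basicsetup} and has an associated canonical Weil character $\psi_i$. To obtain $\psi = \psi_1 \times \cdots \times \psi_\ell$, I would induct on $\ell$, at each stage splitting off $V_1 = Ve_1$ with orthogonal complement $V_1^\perp = \bigoplus_{j>1} V_j$ and applying \cref{rm:orthsumweilchar}.

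The one subtlety -- the \emph{hard part}, though it is quite mild -- is that \cref{rm:orthsumweilchar} is stated for a single ring $R$, whereas the $\psi_i$ appearing in the statement are associated to the smaller rings $R_i$. One may either observe that the (very short) proof of \cref{rm:orthsumweilchar} depends only on the bicharacter $\lambda \circ \omega$ on $V$ (together with $V(1-g) = \bigoplus V_i(1-g_i)$ and $B_g = B_{g_1} \oplus \cdots \oplus B_{g_\ell}$), or invoke \cref{rm:indepR} to reduce both sides to $R' = \ints/m\ints$ before comparing. In either case the factorization of $\psi$ follows directly from \cref{t:values}, using multiplicativity of $\gamma_\lambda$ over orthogonal direct sums (\cref{l:gauss_elem}\ref{it:gaussmult}).
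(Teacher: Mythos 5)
Your proposal is correct and follows essentially the same route as the paper: decompose via the orthogonal idempotents $e_i$, set $V_i = Ve_i$, observe that the decomposition is $\omega$-orthogonal and $\Sp_R(V)$-invariant, and then invoke \cref{rm:orthsumweilchar} for the character factorization. You are in fact slightly more careful than the paper, which glosses over both the primitivity of $\lambda_i$ on $R_i$ and the point that the $\psi_i$ over $R_i$ agree with the Weil characters over $R$ (resolved, as you note, because everything depends only on the bicharacter $\lambda\circ\omega$).
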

\begin{proof}
    This is fairly standard.
    The product decomposition of $R$ is a standard result from
    commutative ring theory
    \cite[Theorem~8.7]{AtiyahMacdonald69}.
   Let $e_i$ be the identity of $R_i$.
   Then $e_ie_j=\delta_{ij}e_i$ and
   $1_R=e_1+\dotsb + e_{\ell}$.
   Set $V_i=e_iV$.
   Then $\omega(V_i,V_j)\subseteq e_ie_jR = \delta_{ij} R_i$,
   so that the above decomposition of $V$ is orthogonal.
   Each $V_i$ is invariant under $\Sp_R(V)$.
   Thus 
   \[ \Sp_R(V)\ni g \mapsto g_{|V_1} \times \dotsm \times 
                           g_{|V_{\ell}} 
             \in \Sp_{R_1}(V_1) \times \dotsm \times 
                                \Sp_{R_{\ell}}(V_{\ell})
   \]
   defines an isomorphism.
   The claim on the Weil character follows from 
   \cref{rm:orthsumweilchar}.
\end{proof}

\begin{cor}
    For $g\in \Sp(V)$, 
    the character value $\psi(g)$ is rational
    if and only if 
    $\abs{V(1-g)}$ is a square.
\end{cor}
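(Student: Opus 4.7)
The plan is to read the statement off the character formula from $\cref{c:values_gen}$, using the square-root dichotomy for $|\C_V(g)|$ and the classification of Gauss sums in $\cref{p:gaussiansigns}$. First I would record the arithmetic background: since $2\in R^{*}$, the module $V$ has no $2$-torsion, so $|V|$ is odd; and $|V|$ is a perfect square (exhibited by any Lagrangian, cf.\ the proof of $\cref{p:sympl_alg_matring}$). By $\cref{l:perp_basic}$ applied to the relation $\C_V(g)=\Ker(1-g)=V(1-g)^{\perp}$, we have $|V|=|\C_V(g)|\cdot|V(1-g)|$, so $|\C_V(g)|$ is a square if and only if $|V(1-g)|$ is a square, and in that case both are odd squares.

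Next I would apply $\cref{c:values_gen}$: choosing a non-degenerate symmetric form $q\colon V(1-g)\times V(1-g)\to R$ (which exists after the reduction of $\cref{rm:indepR}$ and $\cref{c:pirsymform}$), we have
\[
 \psi(g) = \sqrt{|\C_V(g)|}\,\sign(q/B_g)\,\gamma_{\lambda}(-q),
\]
with $\sign(q/B_g)\in\{\pm1\}$, and by $\cref{p:gaussiansigns}\ref{it:gausssquare}$ we have $\gamma_{\lambda}(-q)^{2}=(-1)^{(|V(1-g)|-1)/2}$, so $\gamma_{\lambda}(-q)\in\{\pm1,\pm i\}$. Thus $\psi(g)$ is, up to sign, $\sqrt{|\C_V(g)|}\cdot\gamma_{\lambda}(-q)$, a product of a positive real and a fourth root of unity.

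I would then split into cases. If $|V(1-g)|$ is a square, then it is an odd square, hence $\equiv 1\pmod 8$, so $\gamma_{\lambda}(-q)\in\{\pm 1\}$; since $|\C_V(g)|$ is also a perfect square, $\sqrt{|\C_V(g)|}\in\ints$, giving $\psi(g)\in\ints\subseteq\rats$. Conversely, suppose $|V(1-g)|$ is not a square. If $|V(1-g)|\equiv 3\pmod 4$, then $\gamma_{\lambda}(-q)\in\{\pm i\}$, so $\psi(g)$ is purely imaginary and nonzero (nonzero by $\cref{p:absvalue}$), hence not rational. If instead $|V(1-g)|\equiv 1\pmod 4$, then $\gamma_{\lambda}(-q)\in\{\pm 1\}$ but $|\C_V(g)|$ is not a square, so $\sqrt{|\C_V(g)|}$ is irrational, and $\psi(g)=\pm\sqrt{|\C_V(g)|}$ is irrational. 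This exhausts all cases.

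There is really no obstacle here; $\cref{c:values_gen}$ together with $\cref{p:gaussiansigns}$ does all the work, and the proof reduces to bookkeeping on which factor contributes to irrationality. The only mild subtlety is guaranteeing the existence of a symmetric form $q$ on $V(1-g)$, which is handled by the passage to $R=\ints/m\ints$ as in $\cref{rm:indepR}$, and the numerical observation that odd squares are $\equiv 1\pmod 8$, so the Gauss-sum factor is automatically real precisely when $\sqrt{|\C_V(g)|}$ is an integer.
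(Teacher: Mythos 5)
Your proof is correct and rests on the same ingredients as the paper's: the character formula of \cref{c:values_gen} together with $\gamma_{\lambda}(-q)^2=(-1)^{(\abs{V(1-g)}-1)/2}$ from \cref{p:gaussiansigns}, plus the observation that $\abs{V}=\abs{\C_V(g)}\cdot\abs{V(1-g)}$ is an odd square. The only real difference is in the \enquote{only if} direction: where you run a case analysis on the residue of $\abs{V(1-g)}$ mod $4$ to decide which factor of the formula is irrational, the paper gets this direction in one line from \cref{p:absvalue} alone --- if $\psi(g)\in\rats$ then $\psi(g)^2=\abs{\psi(g)}^2=\abs{\C_V(g)}$ is a perfect square, hence so is $\abs{V(1-g)}$ --- with no appeal to the character formula at all. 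Your version works, but note the minor citation slip: the existence of a non-degenerate symmetric form on $V(1-g)$ over $\ints/m\ints$ comes from \cref{c:pirbilin} (as in the footnote to \cref{t:values}), not from \cref{c:pirsymform}, which only describes the range of a given symmetric form.
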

\begin{proof}
    $\abs{V(1-g)}$ is a square
    if and only if
    $\abs{\C_V(g)} = \psi(1)^2/ \abs{ V(1-g) }$
    is a square.
    \enquote{Only if} follows already from
    \cref{p:absvalue}.
    Conversely, when $\abs{V(1-g)}$ is a square,
    then by \cref{p:gaussiansigns}~\ref{it:gausssquare},
    $\gamma_{\lambda}(-q)= \pm 1$ for any 
    non-degenerate, symmetric form $q$ on $V(1-g)$.
    Thus $\psi(g)\in \rats$.
\end{proof}

When $(V,R,\omega',\lambda')$ is another data satisfying
\cref{basicsetup}, we have Weil characters
$\psi_{\omega, \lambda}$ and
$\psi_{\omega', \lambda'}$ associated to this data.

\begin{prop}[Changing $\omega$]
\label{p:changeomega}
   Suppose that $\omega'\colon V\times V\to R$ is another
   non-degenerate, alternating form,
   and that $g\in \Sp_R(V,\omega) \cap \Sp_R(V,\omega')$.
   Then there is $a\in \GL_R(V)$ such that
   $\omega'(v,w) = \omega(va,w)$ for all $v$, $w\in V$.
   For this $a$, we have $ag=ga$
   and 
   \[ \psi_{\omega', \lambda}(g) 
      = \sign_{V(1-g)}(a) \: \psi_{\omega, \lambda}(g).
   \]
\end{prop}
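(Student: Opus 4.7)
The plan is to reduce everything to the character formula in \cref{c:values_gen}, using that $B_g$ depends on $\omega$ but $V(1-g)$ and $\C_V(g)$ do not.

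First I would produce $a$ and verify $ag=ga$. Both $\omega$ and $\omega'$ induce $R$-module isomorphisms $V\to\Hom_R(V,R)$ via $v\mapsto \omega(v,\cdot)$ and $v\mapsto\omega'(v,\cdot)$ (\cref{l:perp_basic}); composing the first with the inverse of the second gives a unique $a\in\GL_R(V)$ with $\omega'(v,w)=\omega(va,w)$ for all $v,w$. To see $ag=ga$, use that $g$ preserves both forms: for every $v,w\in V$,
\[
   \omega(vag,wg)=\omega(va,w)=\omega'(v,w)=\omega'(vg,wg)=\omega(vga,wg).
\]
Since $wg$ ranges over $V$ and $\omega$ is non-degenerate, $vag=vga$ for all~$v$.

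Next I would compare the two forms $B_g$ on $U:=V(1-g)$. Because $a$ commutes with $1-g$, the submodule $U$ is $a$-invariant, and for $x=v(1-g)$, $y=w(1-g)$,
\[
   B_g^{\omega'}(x,y)=\omega'(v,w(1-g))=\omega(va,w(1-g))=B_g^{\omega}(v(1-g)a,y)=B_g^{\omega}(xa,y).
\]
Hence, by definition of $\sign(\cdot/\cdot)$, $\sign(B_g^{\omega'}/B_g^{\omega})=\sign_U(a|_U)=\sign_U(a)$.

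Finally I would apply \cref{c:values_gen}. Choose any non-degenerate symmetric form $q$ on $U$ (passing to $R'=\ints/m\ints$ via \cref{rm:indepR} if necessary, which leaves $\psi_{\omega,\lambda}(g)$ and $\psi_{\omega',\lambda}(g)$ unchanged and does not affect the $\sign$ factor). From $q(x,y)=B_g^{\omega}(x\sigma,y)=B_g^{\omega'}(x\sigma a^{-1},y)$ one reads off $\sign(q/B_g^{\omega'})=\sign(q/B_g^{\omega})\cdot\sign_U(a)$. Substituting into the formula of \cref{c:values_gen} gives
\[
   \psi_{\omega',\lambda}(g)=\sqrt{\abs{\C_V(g)}}\sign(q/B_g^{\omega'})\gamma_{\lambda}(-q)=\sign_U(a)\,\psi_{\omega,\lambda}(g).
\]
The only mildly delicate step is the chain rule for $\sign(\cdot/\cdot)$, but since signs are $\pm 1$ there is no ambiguity; the existence of the auxiliary symmetric form $q$ (potentially requiring the reduction to $\ints/m\ints$) is the one subtlety one must not overlook.
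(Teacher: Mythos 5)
Your proposal is correct and follows essentially the same route as the paper: construct $a$ from the two isomorphisms $V\to\Hom_R(V,R)$, deduce $ag=ga$ from invariance of both forms, observe $B_g'(x,y)=B_g(xa,y)$ on $V(1-g)$, and conclude via the chain rule for $\sign(\cdot/\cdot)$ and \cref{t:values} (after passing to $\ints/m\ints$ if a symmetric form is needed). The only cosmetic difference is that you cite \cref{c:values_gen} where the paper invokes \cref{t:values} directly after the reduction; the content is identical.
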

\begin{proof}
   Since $\omega'$ and $\omega$ both induce
   isomorphisms $V\to \Hom_R(V,R)$,
   there is some $a\in \GL_R(V)$ such that
   $\omega'(v,w) = \omega(va,w)$ for all $v$, $w\in V$.
   From $g\in \Sp_R(V,\omega) \cap \Sp_R(V,\omega')$, it follows
   \begin{align*}
     \omega'(v,w) &= \omega'(vg,wg) = \omega(vga,wg) \quad\text{and}\\
     \omega'(v,w) &= \omega(va,w) = \omega(vag,wg)
   \end{align*}
   for all $v$, $w\in V$, and thus $vag=vga$.
   In particular, $V(1-g)$ is $a$-invariant and $\sign_{V(1-g)}(a)$
   is defined.
   
   Let $B_g$ and $B_g'$ be the forms associated to 
   $\omega$ and $\omega'$, respectively.
   Then $B_g'(x,y) = B_g(xa,y)$.
   Without loss of generality, we can assume that there is a
   non-degenerate, symmetric form $q$ on $V(1-g)$
   (if necessary, replace $R$ by $\ints/m\ints$).
   Then $\sign(q/B_g')= \sign(q/B_g) \sign_{V(1-g)}(a)$,
   and the result follows from \cref{t:values}.
\end{proof}

For $\lambda\colon R\to \compl^*$ an additive character and $s\in R$,
define $\lambda s\colon R\to \compl^*$ by $(\lambda s)(r)=\lambda(sr)$.
When $\lambda$ is primitive, then $\lambda s = 1 \iff s= 0$
and thus every character of $R$ has the form $\lambda s$.
The character $\lambda s$ is again primitive if and only if
$s$ is a unit in $R$
\cite[\S~3]{Honold01}.

\begin{cor}[Changing $\lambda$]
\label{c:changelambda}
    Let $\lambda' =\lambda s \colon R\to \compl^*$ 
    be another primitive character of $R$.
    Then 
    \[\psi_{\omega, \lambda'}(g) 
         = \sign_{V(1-g)} (s)\;  
           \psi_{\omega, \lambda}(g)
       \quad \text{for all} \quad
       g\in \Sp_R(V,\omega),
    \]
    where $\sign_{V(1-g)}(s)$ 
    is the sign of the permutation on $V(1-g)$
    induced by multiplication with the unit~$s$.
\end{cor}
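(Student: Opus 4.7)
The plan is to reduce this corollary directly to \cref{p:changeomega} by observing that changing the character $\lambda$ to $\lambda s$ is, for the purposes of the Weil representation, the same as changing the form $\omega$ to $s\omega$ (keeping $\lambda$ fixed). Indeed, the symplectic algebra $\cA$ was defined with basis $\menge{b_v}{v\in V}$ and multiplication $b_vb_w = \lambda(\tfrac12 \omega(v,w)) b_{v+w}$, so it depends on $(\omega,\lambda)$ only through the bicharacter $\lambda\circ\omega \colon V\times V \to \compl^*$. Since
\[
   (\lambda s)\!\left(\tfrac12 \omega(v,w)\right)
   = \lambda\!\left(\tfrac12 (s\omega)(v,w)\right),
\]
the algebras and hence the canonical Weil representations built from $(V,\omega, \lambda s)$ and from $(V, s\omega, \lambda)$ are \emph{literally the same}, and thus $\psi_{\omega,\lambda s} = \psi_{s\omega,\lambda}$.

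Next I would check that \cref{p:changeomega} applies to the pair $(\omega, s\omega)$. Since $s\in R^*$, the form $s\omega$ is still non-degenerate (as multiplication by $s$ is an automorphism of $\Hom_R(V,R)$) and alternating, and $\Sp_R(V,\omega) = \Sp_R(V, s\omega)$ because $s\omega(vg,wg)=s\omega(v,w)$ is equivalent to $\omega(vg,wg)=\omega(v,w)$. So any $g\in\Sp_R(V,\omega)$ lies in both symplectic groups.

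The element $a\in\GL_R(V)$ satisfying $(s\omega)(v,w) = \omega(va,w)$ required by \cref{p:changeomega} is evidently $a = s\cdot\id_V$, by $R$-bilinearity of $\omega$. This $a$ trivially commutes with every $g \in \Sp_R(V,\omega)$, and the permutation it induces on $V(1-g)$ is multiplication by $s$, whose signature is $\sign_{V(1-g)}(s)$ by definition. Applying \cref{p:changeomega} therefore yields
\[
   \psi_{\omega,\lambda s}(g)
   = \psi_{s\omega, \lambda}(g)
   = \sign_{V(1-g)}(s\cdot\id_V)\,\psi_{\omega,\lambda}(g)
   = \sign_{V(1-g)}(s)\,\psi_{\omega,\lambda}(g).
\]

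I expect no real obstacle: the only thing to verify carefully is the identification $\psi_{\omega,\lambda s} = \psi_{s\omega,\lambda}$, which is immediate from the construction of $\cA$ and of $W$ in \cref{p:weilconcret,t:weilexist}, since these depend on $(\omega,\lambda)$ only via the composed bicharacter $\lambda\circ\omega$.
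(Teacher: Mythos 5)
Your proof is correct and takes essentially the same route as the paper's own argument, which likewise observes that $(\lambda s)\circ\omega=\lambda\circ(s\omega)$ and $\Sp_R(V,\omega)=\Sp_R(V,s\omega)$ and then invokes \cref{p:changeomega} with $a=s\cdot\id_V$. The identification $\psi_{\omega,\lambda s}=\psi_{s\omega,\lambda}$ that you carefully justify is exactly the observation (already recorded in \cref{rm:indepR}) that the symplectic algebra, and hence the canonical Weil representation, depends on the data only through the bicharacter $\lambda\circ\omega$, so nothing is missing.
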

\begin{proof}
    We have $(\lambda s) \circ \omega = \lambda \circ (s\omega)$,
    where $(s\omega)(v,w)=s\omega(v,w)=\omega(vs,w)$,
    and $\Sp_R(V,\omega)=\Sp_R(V,s\omega)$,
    so this follows from \cref{p:changeomega}.
\end{proof}

    Notice that 
    $\psi_{\omega, \lambda}$ 
    and 
    $\psi_{\omega, \lambda'}$ 
    are not necessarily algebraic conjugates
    (for example, when $R$ is a field of square order,
    or a local ring where the residue field has square
    order).        
    In general, \cref{c:changelambda} does not hold for 
    $g\in \Sp_{\ints}(V,\lambda\circ\omega)$,
    because in general,
    $\Sp_{\ints}(V,\lambda\circ\omega) 
      \neq \Sp_{\ints}(V,\lambda'\circ\omega)$.
      (These groups are isomorphic
      and in fact conjugate in $\Aut_{\ints}(V)$.)
    On the other hand, when
    $\Ker \lambda = \Ker \lambda'$,
    then     
    $\Sp_{\ints}(V,\lambda\circ\omega) 
     = \Sp_{\ints}(V,\lambda'\circ\omega)$    
    and $\lambda' = \lambda s = \lambda^s$ for
    $s\in (\ints/m\ints)^*$,
    and the corresponding Weil characters are algebraic conjugates.

    In the following, we write 
    $\psi_{\lambda}:= \psi_{\omega, \lambda}$,
    as $\omega$ will be fixed.
\begin{cor}
\label{c:indep_cond}
    Assume \cref{basicsetup}.  
    \begin{enumerate}
    \item 
    \label{it:fixunit_s}%
        Let $s\in R^*$.
        Then $\psi_{\lambda}=\psi_{\lambda s}$
        as characters on $\Sp_R(V,\omega)$ 
        if and only if $s$ is a square
        in $R/\ann_R(V)$.
    \item 
    \label{it:fixgrpel_g}%
        Let $g\in \Sp_R(V,\omega)$. 
        Then $\psi_{\lambda}(g) = \psi_{\lambda s}(g)$
        for all $s\in R^*$
        if and only if every simple $R$-module
        occurs with even multiplicity 
        in any composition series of $V(1-g)$.
        (In the case of a local ring $R$,
        this means that $V(1-g)$ has even length.)
    \end{enumerate}
\end{cor}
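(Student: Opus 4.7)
By \cref{c:changelambda}, $\psi_{\lambda s}(g) = \sign_{V(1-g)}(s) \cdot \psi_\lambda(g)$ for every $g \in \Sp_R(V, \omega)$ and $s \in R^*$. Thus~(a) is equivalent to the assertion that $\sign_{V(1-g)}(s) = 1$ for all $g \in \Sp(V)$ if and only if $\bar s$ is a square in $\bar R := R/\ann_R(V)$, and~(b) is equivalent to the assertion that $\sign_{V(1-g)}(s) = 1$ for all $s \in R^*$ if and only if every simple $R$-module appears with even multiplicity as a composition factor of $V(1-g)$.

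The key tool is a formula for $\sign_X(s)$ on an arbitrary finite $R$-module $X$. Choosing a composition series $0 = X_0 \subsetneq X_1 \subsetneq \dotsb \subsetneq X_n = X$ with simple quotients $X_i/X_{i-1} \iso R/\mathfrak{m}_i$, and iterating \cref{l:signreduc} together with \cref{l:zolofrob} applied to each simple quotient, I would prove
\[
\sign_X(s) = \prod_{\mathfrak m} \chi_\mathfrak m(s)^{m_\mathfrak m(X)},
\]
where the product runs over maximal ideals $\mathfrak m$ of $R$, the character $\chi_\mathfrak m \colon R^* \twoheadrightarrow (R/\mathfrak m)^* \to \{\pm 1\}$ is the pullback of the quadratic character, and $m_\mathfrak m(X)$ is the Jordan--Hölder multiplicity of $R/\mathfrak m$ in $X$. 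The characters $\chi_\mathfrak m$ for distinct $\mathfrak m$ are $\GF{2}$-linearly independent on $R^*$: by CRT, $R/\JacR(R) \iso \prod_\mathfrak m R/\mathfrak m$, so $R^* \twoheadrightarrow \prod_\mathfrak m (R/\mathfrak m)^*$ is surjective and each $\chi_\mathfrak m$ is nontrivial on its factor. Applying this with $X = V(1-g)$ immediately proves~(b). For~(a), the forward direction is also immediate: if $\bar s = \bar t^2$ in $\bar R$, then multiplication by $s$ on $V(1-g) \subseteq V$ coincides with the square of multiplication by $t$, an even permutation.

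For the converse of~(a), I would apply \cref{c:ringdirprod_char} to decompose $R = \prod_j R_j$ into local (Frobenius) factors, $V = \bigoplus_j V_j$, and $\Sp(V) = \prod_j \Sp_{R_j}(V_j)$. Since $\bar s = (\bar s_j)$ is a square in $\bar R = \prod_j \bar R_j$ iff each $\bar s_j$ is a square in $\bar R_j = R_j/\ann_{R_j}(V_j)$, this reduces the claim to the case of $R$ local. By Hensel's lemma (valid because $2 \in R^*$), $\bar s \in \bar R$ is a square iff its image in the residue field $R/\mathfrak m$ is a square. It therefore suffices to exhibit $g \in \Sp_R(V, \omega)$ with $V(1-g) \iso R/\mathfrak m$ simple: then $\sign_{V(1-g)}(s) = \chi_\mathfrak m(s)$, and the hypothesis forces $\chi_\mathfrak m(s) = 1$.

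Constructing this $g$ is the main obstacle. Here the Frobenius property of $R$ (equivalent to the existence of the primitive character $\lambda$) is decisive: $\operatorname{soc}(R) = \ann_R(\mathfrak m)$ is simple and isomorphic to $R/\mathfrak m$. Pick any nonzero $v$ in the socle $\ann_V(\mathfrak m) \leq V$; then $\omega(\cdot, v) \colon V \to R$ takes values in $\operatorname{soc}(R)$ (since $\mathfrak m\, \omega(w,v) = \omega(w, \mathfrak m v) = 0$) and is surjective onto it by the non-degeneracy of $\omega$. Writing $\omega(w, v) = \alpha(w) e_0$ for a fixed generator $e_0 \in \operatorname{soc}(R)$ and $\alpha \colon V \to R/\mathfrak m$ a surjection, and picking any nonzero $c \in R/\mathfrak m$, define the $R$-linear map $\psi(w) := c\alpha(w) \cdot v$ (where $c\alpha(w) \in R/\mathfrak m$ acts on $v$ via the canonical $R/\mathfrak m$-structure of $Rv \iso R/\mathfrak m$). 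A direct check shows $g := 1 + \psi$ preserves $\omega$: indeed $\omega(\psi w_1, \psi w_2) = 0$ since both arguments lie in the cyclic submodule $Rv$ and $\omega$ is alternating, while the cross terms $\omega(w_1, \psi w_2) = c\alpha(w_1)\alpha(w_2) e_0$ and $\omega(\psi w_1, w_2) = -c\alpha(w_1)\alpha(w_2) e_0$ cancel by commutativity. Finally $V(1-g) = \psi(V) = Rv \iso R/\mathfrak m$, as required.
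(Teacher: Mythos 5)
Your proposal is correct and follows essentially the same route as the paper: reduce via \cref{c:changelambda} to computing $\sign_{V(1-g)}(s)$, evaluate that sign through a composition series using \cref{l:signreduc} and \cref{l:zolofrob}, and for the converse of (a) reduce to $R$ local and exhibit $g\in\Sp(V)$ with $V(1-g)$ simple. The only real difference is that the paper obtains such a $g$ by appealing to \cref{p:spparam} (via a nonzero symmetric form on a minimal submodule with values in the socle of $R$), whereas you build the corresponding transvection $1+\psi$ explicitly and check symplecticity by hand --- both arguments exploit the Frobenius property of $R$ in the same way, so this is a cosmetic rather than structural variation.
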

\begin{proof}
    In view of \cref{c:ringdirprod_char}, both assertions
    reduce easily to the case where $R$ is local,
    so we assume this.
    Let $J$ be the unique maximal ideal of $R$.
    
    When $s$ is a square in $R/\ann_R(V)$, then
    $\sign_X(s)=1$ for any $R$-submodule~$X$ of $V$
    and thus $\psi_{\lambda} = \psi_{\lambda s}$.
    
    Now assume that $s$ is not a square in $R/\ann_R(V)$.
    The unit group of the finite local ring $R$ has the structure
    $R^* = (R/J)^* \times (1+ J)$.
    As $\abs{R}$ is odd, also $\abs{1+J}=\abs{J}$ is odd and thus every 
    element in $1+J$ is a square in $1+J$.
    Therefore, $s$ is not a square in the field $R/J$.
    Let $0<U\leq V$ be a minimal submodule.
    Then $U\iso R/J$, and we have $\sign_{U}(s)=-1$.
    It follows from \cref{p:spparam}
    that there is an element $g\in \Sp(V)$ with $V(1-g)=U$.
    (We have $U = Ru$ and there are nonzero
    symmetric forms $U\times U \to I\subseteq R$,
    where $I=\ann_{R}(J)$.)
    For such $g$, we have
     $\psi_{\lambda}(g) = - \psi_{\lambda s}(g)$.     
    This shows \ref{it:fixunit_s}.

    Now fix $g\in \Sp(V)$,
    and let $0= U_0 < U_1 < \dotsb < U_{\ell}=V(1-g)$ 
    be a composition series of $V(1-g)$.
    Every composition factor is isomorphic to the unique
    simple $R$-module $R/J$.
    Thus
    $\sign_{V(1-g)}(s) = [\sign_{R/J}(s)]^{\ell}$ for any 
    $s\in R^*$ by \cref{l:signreduc}.
    When $s\in R^*$ is such that $s+J$ is not a square in $R/J$,
    then $\sign_{R/J}(s)=-1$.
    Thus \ref{it:fixgrpel_g} follows from \cref{c:changelambda}.
\end{proof}

As another application,
we reprove
a result of R.~Guralnick, K.~Magaard and P.~H.~Tiep 
\cite[Theorem~1.2]{GuralnickMagaardTiep18},
and extend it
from finite fields to finite rings. 
We begin with a lemma:

\begin{lemma}
\label{l:minusgsquare}
   Let $g\in \Sp_R(V,\omega)$.
   Then $\psi_{\lambda}(-g^2)$ is independent of the primitive
   character $\lambda\colon R\to\compl$.
\end{lemma}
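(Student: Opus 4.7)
The plan is to use \cref{c:changelambda} to reduce to a parity statement. Since every primitive character of $R$ is of the form $\lambda s$ for a unit $s \in R^*$ (cf.\ \cite{Honold01}), \cref{c:changelambda} applied to $-g^2 \in \Sp_R(V,\omega)$ yields
\[
   \psi_{\lambda s}(-g^2) = \sign_{V(1+g^2)}(s) \cdot \psi_\lambda(-g^2).
\]
So it suffices to prove $\sign_U(s) = 1$ for every $s \in R^*$, where $U := V(1+g^2)$. By \cref{c:indep_cond}\ref{it:fixgrpel_g} applied to $-g^2$, this is in turn equivalent to every simple $R$-module appearing with even multiplicity in a composition series of $U$.

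The crucial construction is a non-degenerate alternating $R$-bilinear form on $U$. Let $\psi := g + g^{-1} = g^{-1}(1+g^2) \in \enmo_R(V)$. The symplectic property of $g$ (which gives $\omega(vg, w) = \omega(v, wg^{-1})$) shows that $\psi$ is self-adjoint with respect to $\omega$, that $V\psi = U$, and that $\ker \psi = \ker(1+g^2) =: K$. By \cref{l:ker_senkr}, $U = K^\perp$. Setting
\[
   B(u_1, u_2) := \omega(v_1, u_2), \qquad u_1 = v_1 \psi,
\]
gives a well-defined (any ambiguity $v_1 \leadsto v_1 + k$ with $k \in K$ contributes $\omega(k, u_2) = 0$ since $u_2 \in K^\perp$), $R$-bilinear, non-degenerate form on $U$; the self-adjointness of $\psi$ combined with the antisymmetry of $\omega$ then gives
\[
   B(u_1, u_1) = \omega(v_1, v_1 g) + \omega(v_1, v_1 g^{-1}) = \omega(v_1, v_1 g) - \omega(v_1, v_1 g) = 0,
\]
so $B$ is alternating.

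To deduce the even-multiplicity property, I would reduce via \cref{c:ringdirprod_char} to the case where $R$ is local; then it remains to show that the $R$-length of $U$ is even. I construct a Lagrangian submodule $L = L^{B} \subseteq U$ by the standard greedy procedure of iteratively adjoining isotropic elements to an initially zero submodule, which terminates because $U$ is finite. The pairing $B$ then induces an $R$-module isomorphism $L \to \Hom_R(U/L, R)$, $\ell \mapsto B(\ell, \cdot)$ (injective by non-degeneracy of $B$; surjective since any preimage under $B$ of a homomorphism factoring through $U/L$ automatically lies in $L^B = L$). Since $R$ is a finite local Frobenius ring, $\Hom_R(-, R)$ is an exact, length-preserving duality on finitely generated $R$-modules (the unique simple $R$-module being self-dual for local Frobenius rings). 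Therefore the $R$-length of $L$ equals that of $U/L$, whence the $R$-length of $U$ is twice that of $L$ and is in particular even. The main obstacle is this last step—invoking the length-preserving property of the Frobenius duality—which rests on standard but nontrivial facts about finite local Frobenius rings.
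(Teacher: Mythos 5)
Your proof is correct, and it takes a genuinely different route from the paper's. The paper reduces to $R$ local (as you do) and then shows $\Ker(1+g^2)$ has even length by a counting argument: every nonzero $v\in\Ker(1+g^2)$ lies in a $\erz{g}$-orbit of size $4$, so $\abs{\Ker(1+g^2)}=q^{\ell}\equiv 1 \bmod 4$, which settles $q\equiv 3\bmod 4$ at once; for $q\equiv 1\bmod 4$ it lifts a square root $i$ of $-1$ to $R$ and splits $\Ker(1+g^2)=\Ker(g+i)\oplus\Ker(g-i)$ into two pieces of equal size via \cref{l:ker_senkr}. Your argument instead equips $U=V(1+g^2)$ with an intrinsic non-degenerate alternating form $B$ built from the self-adjoint operator $g+g^{-1}$ (please rename it -- $\psi$ is already the Weil character), and concludes that $\abs{U}$ is a perfect square. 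This is uniform in $q$, avoids the case split and the square root of $-1$, and actually proves the slightly stronger structural fact that $V(1+g^2)$ carries a symplectic structure. All the ingredients you use (well-definedness via $K=U^{\perp}$ from \cref{l:ker_senkr}, the existence of a Lagrangian as a maximal isotropic submodule, non-circular use of \cref{c:changelambda} and \cref{c:indep_cond}) check out. One simplification: your final appeal to Frobenius duality being length-preserving is heavier than needed. Since $L=L^{B}=\leftidx{^{B}}{L}$, \cref{l:perp_basic} applied inside $(U,B)$ already gives $\abs{U}=\abs{L}\,\abs{L^{B}}=\abs{L}^{2}$, so $\abs{U}$ is a square and, $U$ being a module over the local ring $R$ with residue field of order $q$, its length $\log_{q}\abs{U}$ is even -- no duality formalism required.
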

\begin{proof}
   By \cref{c:ringdirprod_char}, we may assume that $R$ is local,
   with maximal ideal $J$.
   By \cref{c:indep_cond}, we need to show that
   $V(1+g^2)$ has even length.
   Equivalently, we can show that $\Ker(1+g^2)$ has even length.
   Set $q:= \abs{R/J}$ and let $\ell$ be the length of 
   $\Ker(1+g^2)$.
   Then $\abs{\Ker(1+g^2)} = q^{\ell}$.
   For $v\in \Ker(1+g^2)$, we have $vg^2 = -v$.
   Thus the $\erz{g}$-orbit of any $v\in \Ker(1+g^2)$ has $4$ elements,
   except for $v= 0$.
   Thus $q^{\ell} \equiv 1 \mod 4$.
   It follows that when $q\equiv 3\mod 4$, then $\ell$ is even
   as claimed.
   Suppose $q\equiv 1 \mod 4$.
   Then $-1$ is a square in $R/J$, and thus also in $R$,
   say $-1 = i^2$ with $i\in R$.
   We have $1+g^2 = (g+i)(g-i)$.
   For $v\in \Ker(1+g^2)$, we can write
   \[ v = \frac{1}{2} v(1+ig) + \frac{1}{2}v(1-ig)
        \in \Ker(g+i) + \Ker(g-i).
   \]
   Thus
   \[ \Ker(1+g^2) = \Ker(g+i) \oplus \Ker(g-i).
   \]
   By \cref{l:ker_senkr},
   $\Ker(g + i) = V(g - i)^{\perp}$.
   Thus
   \[ \abs{\Ker(g+i)} 
      = \abs{V(g-i)^{\perp}}
      = \abs{V}/ \abs{V(g-i)} 
      = \abs{\Ker(g-i)},
   \]
   and thus $\Ker(1+g^2)$ has even length as claimed.
\end{proof}

\begin{cor}
\label{c:GMT18}
   For $g\in \Sp_R(V)$ and $\lambda\colon R\to \compl^*$ primitive,
   we have
   \[ \psi_{\lambda}(g) \psi_{\lambda}(-g) 
      = (-1)^{ \frac{ \sqrt{\abs{V}} -1 }{ 2 } } 
        \psi_{\lambda^2}(g^2).
   \]
\end{cor}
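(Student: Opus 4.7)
The plan is to apply the convolution formula from \cref{p:mult} to the element $-g^2$ in two different ways: first as the product $g\cdot(-g)$ with character $\lambda$, and second as $(-1)\cdot g^2$ with character $\lambda^2$ (which is again primitive since $2$ is a unit in $R$). Comparing the two resulting expressions for $\psi_{\lambda}(-g^2)$ and $\psi_{\lambda^2}(-g^2)$ and invoking the just-proved \cref{l:minusgsquare} will yield the identity directly.

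For the first factorization, \cref{p:mult} gives
\[
\psi_{\lambda}(-g^2)
 = \frac{\psi_{\lambda}(g)\,\psi_{\lambda}(-g)}{\sqrt{\abs{V}}}
   \sum_{y\in V(1-g)\cap V(1+g)}
        \lambda\bigl(\tfrac{1}{2}(B_g(y,y)+B_{-g}(y,y))\bigr).
\]
Combining \cref{l:perp_basic} and \cref{l:ker_senkr} yields $V(1-g)\cap V(1+g) = (\Ker(1-g)+\Ker(1+g))^{\perp} = \Ker(1-g^2)^{\perp} = V(1-g^2)$. For $y \in V(1-g^2)$, writing $y = u(1-g^2) = (u(1+g))(1-g) = (u(1-g))(1+g)$ and unwinding \cref{l:sigmaform} gives
\[
B_g(y,y) = \omega(u(1+g),y),\qquad B_{-g}(y,y) = \omega(u(1-g),y),
\]
whose sum equals $\omega(2u,y) = 2\,B_{g^2}(y,y)$. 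Using the identity $\lambda^2(r) = \lambda(2r)$, the sum in the convolution formula becomes $\sum_{y\in V(1-g^2)} \lambda^2\bigl(\tfrac{1}{2}B_{g^2}(y,y)\bigr)$.

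For the second factorization, \cref{p:mult} applied with character $\lambda^2$ to $(-1)\cdot g^2 = -g^2$ simplifies because $V(1-(-1)) = V$ and $B_{-1}(x,x) = \omega(v,2v) = 0$ for $x = 2v$. It becomes
\[
\psi_{\lambda^2}(-g^2)
 = \frac{\psi_{\lambda^2}(-1)\,\psi_{\lambda^2}(g^2)}{\sqrt{\abs{V}}}
   \sum_{y\in V(1-g^2)}\lambda^2\bigl(\tfrac{1}{2}B_{g^2}(y,y)\bigr),
\]
which contains exactly the same sum as the previous expression.

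Dividing the two identities---legitimate since $\abs{\psi_{\lambda^2}(-g^2)}^2 = \abs{\C_V(-g^2)} \ge 1$ by \cref{p:absvalue}, so all denominators are nonzero (similarly $\psi_{\lambda^2}(g^2)\ne 0$)---yields
\[
\frac{\psi_{\lambda}(-g^2)}{\psi_{\lambda^2}(-g^2)}
 = \frac{\psi_{\lambda}(g)\,\psi_{\lambda}(-g)}{\psi_{\lambda^2}(-1)\,\psi_{\lambda^2}(g^2)}.
\]
By \cref{l:minusgsquare} the left-hand side equals $1$, and \cref{p:psitaupm} gives $\psi_{\lambda^2}(-1) = (-1)^{(\sqrt{\abs{V}}-1)/2}$, so the claim follows after rearranging. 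The only step requiring genuine computation is the identity $B_g(y,y) + B_{-g}(y,y) = 2\,B_{g^2}(y,y)$ on $V(1-g^2)$, which I expect to be the main obstacle; however, once the intersection $V(1-g)\cap V(1+g)$ has been identified with $V(1-g^2)$, it is really just careful bookkeeping of the two representations of $y$ as an element of $V(1-g)$ and of $V(1+g)$.
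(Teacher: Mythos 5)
Your proposal is correct and follows essentially the same route as the paper: both apply the convolution formula of \cref{p:mult} to $-g^2=g\cdot(-g)$ and to $-g^2=(-1)\cdot g^2$, identify $V(1-g)\cap V(1+g)=V(1-g^2)$, verify $B_g(y,y)+B_{-g}(y,y)=2B_{g^2}(y,y)$, and then compare the two expressions via \cref{l:minusgsquare} and $\psi_{\lambda^2}(-1)=(-1)^{(\sqrt{\abs{V}}-1)/2}$. Your explicit remark that the nonvanishing of $\psi_{\lambda^2}(-g^2)$ justifies the cancellation is a small point the paper leaves implicit.
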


Notice that $\psi_{\lambda}=\psi_{\lambda^2}$
on $\Sp_R(V,\omega)$ if and only if $2$ is a square in $R$.
When $R$ is local with residue field $R/J$ of order $q$,
then $2$ is a square in $R$ if and only if $q\equiv \pm 1 \mod 8$.
In particular, \cref{c:GMT18} for a finite field is a result by
Guralnick, Magaard and Tiep
\cite[Theorem~1.2]{GuralnickMagaardTiep18}.

\begin{proof}[Proof of \cref{c:GMT18}]
   We want to apply the convolution formula from
   \cref{p:mult} to $-g^2 = g\cdot (-g) $.
   From $v= (1/2)\big( v(1+g) + v(1-g) \big)$ we see that
   $\Ker(1-g^2) = \Ker(1-g)\oplus \Ker(1+g)$.
   It follows that
   \begin{align*}
     V(1-g^2) = \Ker(1-g^2)^{\perp} 
              &= \Ker(1-g)^{\perp} \cap \Ker(1+g)^{\perp}
           \\ &= V(1-g) \cap V(1+g).
   \end{align*}
   Next, for $x=v(1-g^2)\in V(1-g) \cap V(1+g)$, we have
   \begin{align*}
     B_g(x,x) + B_{-g}(x,x)
     &= \omega( v(1+g), v(1-g^2) ) + \omega( v(1-g), v(1-g^2) )
     \\ &= 2\omega(v,v(1-g^2))
         = 2 B_{g^2}(x,x).
   \end{align*}
   The convolution formula yields
   \[ \psi_{\lambda}(-g^2) 
      = \frac{\psi_{\lambda}(g)\psi_{\lambda}(-g)}{\sqrt{\abs{V}}}
         \sum_{x\in V(1-g^2)} \lambda(B_{g^2}(x,x)).
   \]
   The convolution formula applied to $\psi_{\lambda^2}$
   and $-g^2 = -1\cdot g^2$ yields
   \begin{align*}
     \psi_{\lambda^2}(-g^2)
       &= \frac{ \psi_{\lambda^2}(-1) \psi_{\lambda^2}(g^2) 
              }{ \sqrt{\abs{V}} }
          \sum_{ x\in V(1-(-1))\cap V(1-g^2) }
               \lambda^2( \frac{1}{2} 
                          ( \underbrace{B_{-1}(x,x)}_{=0} 
                           + B_{g^2}(x,x)
                          )
                        )
       \\
       &=  (-1)^{ (\sqrt{ \abs{V} } -1 )/2 } \;
           \frac{ \psi_{\lambda^2}(g^2) }{ \sqrt{\abs{V}} } 
           \sum_{x\in V(1-g^2)} \lambda(B_{g^2}(x,x)).
   \end{align*}
   But by \cref{l:minusgsquare},
   $\psi_{\lambda}(-g^2)=\psi_{\lambda^2}(-g^2)$.
   Cancelling the common factors from the two expressions,
   we get the result.
\end{proof}

\printbibliography[heading=bibintoc]

\end{document}